%% file: High-d-spherical-arXiv.tex
\documentclass{article}

\usepackage{a4}
\usepackage{graphicx}
\usepackage{subfigure}
\RequirePackage{amsthm,amsmath,amssymb,amsfonts,enumerate}

\usepackage{makeidx}         
\usepackage{graphicx}        
\usepackage{multicol}        

\usepackage{amsmath}
\usepackage{amsfonts}
\usepackage{amsbsy}
\usepackage{graphics}
\usepackage{epsfig}
\usepackage{amssymb}
\usepackage{mathrsfs}
\usepackage{dsfont}

\usepackage{url}

\usepackage{color}
\usepackage[dvipsnames, table]{xcolor}
\usepackage[textsize=scriptsize]{todonotes}
\usepackage{comment}
\usepackage{subfiles}

\def\Px{\mathsf{P}}
\def\mgbb{{\boldsymbol{\gamma}}}

\def\eqd{\stackrel{{d}}{=}}

\def\supp{\mathrm{Supp}}
\def\Arg{\mathrm{Arg}}
\def\TT{^\top}
\def\Pr{\mathsf{Prob}}
\def\RR{\mathds{R}}
\def\QQ{\mathds{Q}}

\def\SX{{\mathscr X}}
\def\Ib{\mathbf{I}}
\def\xb{\mathbf{x}}
\def\Xb{\mathbf{X}}
\def\Ex{\mathsf{E}}
\def\var{\mathsf{var}}
\def\simd{\stackrel{d}{\sim}}
\def\me{\epsilon}
\def\ml{\lambda}
\def\ms{\sigma}
\newcommand{\vsp} {\vspace{0.3cm}}
\def\PP{\mathbb{P}}
\def\SB{{\mathscr B}}
\def\SE{{\mathscr E}}
\def\SJ{{\mathcal J}}
\def\SS{{\mathcal S}}
\def\0b{\mathbf{0}}
\newcommand{\bea}{\begin{eqnarray*}}
\newcommand{\eea}{\end{eqnarray*}}
\newcommand{\be}{\begin{eqnarray}}
\newcommand{\ee}{\end{eqnarray}}
\def\Rb{\mathbf{R}}
\def\ub{\mathbf{u}}
\def\zb{\mathbf{z}}
\def\CR{\mathsf{CR}}
\def\mg{\gamma}
\def\dd{\mbox{\rm d}}
\def\rad{\stackrel{\rm d}{\ra}}
\def\ra{\rightarrow}
\def\SN{{\mathcal N}}
\def\ma{\alpha}
\def\mo{\omega}
\def\e1{\mathrm{e}}
\newcommand{\fin} {\mbox{}~\hfill{\lower-0.3ex\hbox{$\triangleleft$}}}
\def\Ind{\mathds{1}}
\def\SO{{\mathcal O}}

\def\SV{\mathscr{V}}
\def\bb{\mathbf{b}}

\newtheorem{theorem}{Theorem}[section]
\newtheorem{proposition}{Proposition}[section]
\newtheorem{lemma}{Lemma}[section]
\newtheorem{corollary}{Corollary}[section]
\newtheorem{definition}{Definition}[section]
\newtheorem{remark}{Remark}[section]

\begin{document}

\title{Optimal random quantisers for spherically symmetric distributions}

\author{Luc Pronzato\footnote{(corresponding author) Laboratoire I3S, CNRS-Universit\'e C\^ote d'Azur, Sophia Antipolis, France, {\tt pronzato@i3s.unice.fr}} \ and Anatoly Zhigljavsky\footnote{School of Mathematics, Cardiff University, UK, {\tt ZhigljavskyAA@cardiff.ac.uk}}}

\date{\today}

\maketitle

\begin{abstract}
Zador's celebrated theorem is a cornerstone of optimal quantisation: it establishes both the weak limit of the empirical distribution of an optimal $n$-point quantiser in $\RR^d$ and the decay rate of the associated $L_s$-mean quantisation error. In large dimension, however, observing this asymptotic behaviour requires an astronomically large sample size. We prove that, for spherically symmetric target distributions, optimisation over all spherically symmetric distributions is a convex problem and derive an equivalence theorem that both characterises global optimality and yields a constructive algorithm. We show that, for moderate $n$, random quantisers uniformly distributed on a sphere of suitably chosen radius $R$ perform exceptionally well and, over a broad range of values of $n$, are numerically certified to be optimal among all random quantisers. Their expected distortion has an explicit integral representation that can be evaluated to arbitrary precision, and we prove concentration across random quantisers: the distortion variance tends to zero as $n\to\infty$ for fixed $d$. For $s=2$, both the optimal radius and the associated minimum expected distortion admit exact expressions. For general $s$, the optimal radius can be determined efficiently, and extreme-value theory provides useful approximations when $n$ grows with $d$. Depending on this growth rate, $R$ either converges to zero or approaches a positive limit that is independent of $s$.
\end{abstract}

{\bf keywords:} quantisation, distortion, spherically symmetric distribution, Zador's theorem, extreme-value theory

{\bf MSC:} primary 28-08; secondary 60D99, 62K99

\section{Introduction}
\label{sec:intr}

For $\mu$ a measure on $\SX\subseteq\RR^d$ (or a $d$-dimensional Riemannian manifold),
with density $\varphi$ and finite moment of order $s>0$, the $(\mu,s)$-distortion on an $n$-point set $\Xb_n=\{\xb_1,\ldots,\xb_n\}\in\RR^{n\times d}$ is $D_{\mu,s}(\Xb_n)=\Ex_\mu\{ \min_{i=1,\ldots,n} \|U-\xb_i\|^s \}$, where $U\simd\mu$. Zador's celebrated theorem \cite{Zador82} states that, when $\mu$ has finite moment of order $s'>s$, the empirical distribution of an $n$-point optimal quantiser $\Xb_n^*$ minimising $D_{\mu,s}(\Xb_n)$ converges weakly, as $n\to\infty$, to the probability measure with density proportional to $\varphi^{d/(d+s)}$. It also characterises the asymptotic minimum of the normalised distortion $n^{s/d}D_{\mu,s}(\Xb_n^*)$, or equivalently of the normalised quantisation error $n^{1/d}E_{\mu,s}(\Xb_n^*)$. These asymptotic considerations are informative only when $n^{1/d}$ is sufficiently large; in large dimension, this requires astronomically large values of $n$.

The purpose of this paper is to show that for $\mu$ spherically symmetric, random quantisers with a suitable spherically symmetric distribution can achieve exceptional performance. When $n$ is large but $n^{1/d}$ is small, the best random quantisers we obtain are for designs $\Xb_n$ uniform on a sphere $\SS_{d-1}(R)$ with suitable radius $R$, and the variability of $D_{\mu,s}(\Xb_n)$ vanishes as $n$ increases (Theorem~\ref{Th:distortion-concentration}). In the particular case $s=2$, the optimal $R$ is proportional to $\Ex_\mu\{\|U\|\}$ with a constant depending only on $n$ and $d$, and the best expected $(\mu,2)$-distortion is $\Ex_\mu\{\|U\|^2\}-R^2$; see Corollary~\ref{Coro:astar-s2-exact}. For general $s$, using extreme-value theory, we derive approximations for $R$ and for the best expected $(\mu,s)$-distortion.  We then identify different asymptotic regimes where $d$ increases and $n$ grows with $d$. When $\mu$ possesses a norm-concentration property and concentrates on a sphere of radius $r$ as $d \to \infty$, the optimal radius $R$ of the random quantiser exhibits distinct behaviours depending on the growth rate of $n$ relative to $d$:
for a sub-exponential growth, $R \to 0$; for a super-exponential growth, $R \to r$; for an exponential growth $n = \ml^d(1 + o(1))$, $R \to r\sqrt{1 - 1/\ml^2}$; see Theorem~\ref{Th:joint-exact-distortion-limit}.
Overall, the paper provides a framework for generating nearly optimal quantisers in the large $n$ and $d$ situation for spherically symmetric distributions.
Sequences of nested designs with low quantisation error can thus be efficiently generated, offering a compelling alternative to the classical greedy-packing algorithm for space-filling design (see, e.g., \cite{PZ2022}). While this paper focuses exclusively on spherically symmetric distributions, the conclusion highlights that extending this approach to the more common case of quantising the uniform measure in the $d$-dimensional hypercube yields promising numerical results.

The paper is organised as follows. Section~\ref{S:distancecdt-etc} introduces the key concepts used throughout the paper. It also derives an explicit integral expression for the expected $(\mu,s)$-distortion of a random quantiser whose $n$ points are independent and identically distributed (i.i.d.) according to a spherically symmetric distribution $\PP$.
In Section~\ref{S:all-radial-optimisation}, we exploit the convexity of the expected $(\mu,s)$-distortion with respect to $\PP$ to derive a necessary and sufficient condition for optimality of a quantiser distribution $\PP^*$ and propose an optimisation algorithm. Three emblematic cases are presented: (\textit{i}) $\mu$ is uniform on the unit sphere, (\textit{ii}) $\mu$ is uniform in the unit ball, and (\textit{iii}) $\mu$ follows a multivariate spherically symmetric normal distribution. In the latter two scenarios, in addition to the case where $\PP$ is uniform on a sphere, we also analyse the cases where $\PP$ is uniform in a ball and where $\PP$ is a multivariate spherically symmetric normal distribution. Section~\ref{S:asymptotic} applies extreme-value theory to derive asymptotic results ($n \to \infty$) for random quantisers uniform on a sphere.
Section~\ref{S:conclusion} provides a brief conclusion to the paper. Several illustrative examples are presented, using the Matlab scripts available at \url{https://github.com/lpronzato-prog/Spherically-symmetric-random-designs}. Additional numerical examples are presented in the supplementary material.

\vsp
We denote by $\SB_d(\xb,r)$ the $d$-dimensional (closed) Euclidean ball with centre $\xb$ and radius $r$; we write $\SB_d(r)=\SB_d(\0b_d,r)$, and $\SS_{d-1}(r)$ denotes its $(d-1)$-dimensional boundary sphere; $\|\cdot\|$ is the $\ell_2$-norm; $\ub\TT$ is the transpose of the column vector $\ub$; $\Ib_d$ is the $d$-dimensional identity matrix.

\section{Distance c.d.f., quantisation error and distortion}\label{S:distancecdt-etc}

\subsection{Definitions and basic properties}

For any $n$-point set $\Xb_n$ we denote by $d(\cdot,\Xb_n)$ the distance function defined by $\xb\in\RR^d \mapsto d(\xb,\Xb_n)=\min_{\xb_i\in\Xb_n}\|\xb-\xb_i\|$; $\nu$ denotes a probability measure on $\RR^d$.

\begin{definition}\label{D:distance-cdf}
For $\Xb_n$ a fixed $n$-point set in $\RR^d$, the \emph{distance c.d.f.} $F(\cdot\,;\Xb_n,\nu)$ is the c.d.f.\ of the random variable $d(U,\Xb_n)$ when $U$ is distributed with $\nu$:
\bea 
F(t;\Xb_n,\nu) = \nu\left\{U\in \bigcup_{i=1}^n \SB_d(\xb_i,t)\right\}  = \nu\{d(U,\Xb_n) \leq t\}\,.
\eea
When $\Xb_n=\Rb_n$ is a random $n$-point set generated with some probability measure $\Px_n$ on $\RR^{n\times d}$, we define the \emph{mean distance c.d.f.} $F(\cdot\,; \Px_n,\nu)$ of the random variable $d(U,\Rb_n)$ by $F(t; \Px_n,\nu) = \Ex_{\Px_n}\{F(t;\Rb_n,\nu) \}$.
\end{definition}

If $\nu$ is the delta measure $\delta_\ub$ concentrated at $\ub\in\RR^d$, the mean distance c.d.f.\ is simply $F(t; \Px_n,\delta_\ub)=\Px_n \left\{ d(\ub,\Rb_n)\leq t  \right\}$.

When $\SX$ is a compact subset of $\RR^d$, we denote by $\CR(\Xb_n)= \max_{\xb\in\SX} d(\xb,\Xb_n)$ the covering radius of $\Xb_n$. If $\nu$ is equivalent to the Lebesgue measure on $\SX$, $F(\cdot\,;\Xb_n,\nu)$ has a density on $[0,\CR(\Xb_n)]$, which we denote by $f(\cdot\,;\Xb_n,\nu)$, and the essential supremum of the random variable $d(U,\Xb_n)$ equals $\CR(\Xb_n)$.
When $\nu$ is the uniform measure on $\SX$, we simply denote the distance c.d.f.\ by $F(\cdot\,;\Xb_n)$: the random variable $d(U,\Xb_n)$ is supported on $[0,\CR(\Xb_n)]$ and $F(\cdot\,;\Xb_n)$ contains all information about the filling of $\SX$ by $\Xb_n$. In particular, the $\mg$-quantiles $q_\mg(\Xb_n)$ of $F(\cdot\,;\Xb_n)$ provide useful space-filling characteristics. When $\SX$ is connected and coincides with the closure of its interior, the density $f(\cdot\,;\Xb_n)$ is strictly positive on $(0,\CR(\Xb_n))$ and $q_\mg(\Xb_n)$ is defined by
\bea 
F[q_\mg(\Xb_n);\Xb_n]=\mg \ \mbox{ for any }\mg\in(0,1)\,,
\eea
with $q_{0}(\Xb_n)=0$ and $q_{1}(\Xb_n)=\CR(\Xb_n)$.

\begin{definition}\label{D:quantisation-error}
The \emph{$L_s$-mean quantisation error} of a general probability measure $\mu$ on $\RR^d$ with finite $s$-th moment $\Ex\{\|X\|^s\}$ for a given $n$-point set $\Xb_n$ is the $L_s(\mu)$-mean (and, when $s\geq1$, the $L_s(\mu)$-norm) of the distance function $d(\cdot,\Xb_n)$:
\bea 
E_{\mu,s}(\Xb_n) = \|d(\cdot,\Xb_n)\|_{L_s} = \left[\int_\SX d^s(\xb,\Xb_n)\,\mu(\dd\xb)\right]^{1/s}\,, \quad s> 0\,.
\eea
The quantity $D_{\mu,s}(\Xb_n)=E_{\mu,s}^s(\Xb_n)$ is called the $(\mu,s)$-distortion related to $\Xb_n$ {\rm \cite{Pages97}}. When $\Xb_n=\Rb_n$ is random, we define the expected $(\mu,s)$-distortion as $D_{\mu,s}(\Px_n)=\Ex_{\Px_n}\{E_{\mu,s}^s(\Rb_n)\}$.
\end{definition}

For any $s>0$, the $(\mu,s)$-distortion satisfies (see, e.g., \cite[p.~150]{Feller71-2}):
\be
D_{\mu,s}(\Xb_n) = \Ex_\mu\{d^s(U,\Xb_n)\} = s\, \int_0^\infty t^{s-1}\,[1-F(t;\Xb_n,\mu)] \,\dd t \,. \label{moment-ds-cdf}
\ee
The result is easily obtained when $\mu$ is such that $F(\cdot\,;\Xb_n,\mu)$ has a density $f(\cdot\,;\Xb_n,\mu)$. Indeed, by swapping the order of integration we get
\bea
\Ex_\mu\{d(U,\Xb_n)\} &=& \int_0^\infty \tau\,f(\tau;\Xb_n,\mu)\,\dd \tau = \int_0^\infty \left(\int_0^\tau \dd t\right) \,f(\tau;\Xb_n,\mu)\,\dd \tau \\
&& \hspace{-1cm} = \, \int_0^\infty \left(\int_t^\infty  f(\tau;\Xb_n,\mu)\,\dd \tau\right) \,\dd t = \int_0^\infty [1-F(t;\Xb_n,\mu)] \,\dd t \,.
\eea
Now, for any $s>0$, $G^{(s)}(\cdot\,;\Xb_n,\mu)$, defined by $G^{(s)}(t\,;\Xb_n,\mu)=F(t^{1/s};\Xb_n,\mu)$ for any $t\geq 0$, is the c.d.f.\ of $d^s(U,\Xb_n)$, and we obtain
\bea
\Ex_\mu\{d^s(U,\Xb_n)\} = \int_0^\infty [1-F(t^{1/s};\Xb_n,\mu)] \,\dd t = s\, \int_0^\infty t^{s-1}\,[1-F(t;\Xb_n,\mu)] \,\dd t \,. 
\eea

\begin{remark}\label{R:distortion-evaluation}
For any probability measure $\mu$ such that $\Ex\{\|X\|^{2s}\}<\infty$, the $(\mu,s)$-distortion of an arbitrary $n$-point set $\Xb_n$ can be approximated by the Monte-Carlo estimator $D_{\mu_N,s}(\Xb_n)=(1/N)\sum_{i=1}^N d^s(U_i,\Xb_n)$, where the $U_i$ are i.i.d.\ with distribution $\mu$. This estimator satisfies the classical central limit theorem
\bea
\sqrt{N} \left[D_{\mu_N,s}(\Xb_n)-D_{\mu,s}(\Xb_n)\right] \rad \SN(0,V(\Xb_n))\,, \quad N\to\infty\,,
\eea
where $V(\Xb_n)=D_{\mu,2s}(\Xb_n)-D_{\mu,s}^2(\Xb_n)$. When $\mu$ is supported on $\SX$ compact, $\CR(\Xb_n)<\infty$ and a direct application of Hoeffding's inequality gives:
\bea
\mbox{for all } \ma\in(0,1)\,, \ \Pr\left\{|D_{\mu_N,s}(\Xb_n)-D_{\mu,s}(\Xb_n)|>\ma\, \CR^s(\Xb_n) \right\} < 2\,\e1^{-2\,N\ma^2} \,,
\eea
showing the exponentially fast concentration of $D_{\mu_N,s}(\Xb_n)$ to its mean $D_{\mu,s}(\Xb_n)$ as $N$ increases. When $\mu$ has unbounded support, depending on its tail properties, other concentration inequalities can also be derived.
\fin
\end{remark}

\subsection{Quantisation error of random quantisers}

Let $\Rb_n=\{\xb_1,\ldots,\xb_n\}$ denote a random $n$-point set generated according to a probability measure $\Px_n$ on $\RR^{n\times d}$. Our objective is to choose $\Px_n$ to minimise the expected $(\mu,s)$-distortion $D_{\mu,s}(\Px_n)$ defined in Definition~\ref{D:quantisation-error}.

From \eqref{moment-ds-cdf}, we can write
\be\label{Ls-mean-meandcdf}
D_{\mu,s}(\Px_n) =  s \int_{t\geq 0} t^{s-1}[1-F(t;\Px_n,\mu)]\,\dd t
\ee
where $F(\cdot\,; \Px_n,\mu) = \Ex_{\Px_n}\{F(\cdot\,;\Rb_n,\mu) \}$ is the mean distance c.d.f.\ of Definition~\ref{D:distance-cdf}; that is, the c.d.f.\ of the random variable $d(U,\Rb_n)$ where both $U$ and $\Rb_n$ are random. The expected $(\mu,s)$-distortion $D_{\mu,s}(\Px_n)$ is the $(\mu,s)$-distortion for the c.d.f.\ $F(\cdot\,; \Px_n,\mu)$. In this paper we focus on distortion and quantisation error, but we could have also considered the $\mg$-quantiles of $F(\cdot\,; \Px_n,\mu)$; see Remark~\ref{R:quantiles}.
Note that Jensen inequality yields the following upper bound on the expected quantisation error: $\Ex_{\Px_n}\{ E_{\mu,s}(\Rb_n) \}\leq [D_{\mu,s}(\Px_n)]^{1/s}$ for $s \geq 1$.

By swapping the order of expectations, we get
\be
F(t; \Px_n,\mu)  &=& \Ex_{\Px_n} \left\{\Ex_\mu \left\{\Ind_{\{\ub\in\RR^d:\,d(\ub,\Rb_n)\leq t\}}(U) \right\}\right\} \nonumber \\
&=&  \Ex_\mu \left\{\Ex_{\Px_n}\left\{\Ind_{\{\ub\in\RR^d:\,d(\ub,\Rb_n)\leq t\}}(U) \right\}\right\} \nonumber \\
&=& \label{eq:prod_partial_cdf} \Ex_\mu  \left\{F(t; \Px_n,\delta_U) \right\}\,,
\ee
where $F(t; \Px_n,\delta_\ub) = \Px_n  \left\{ d(\ub,\Rb_n)\leq t  \right\}$.

In the following, we call random quantiser an $n$-point set $\Rb_n$ with i.i.d.\ points having some distribution $\mathbb{P}$, so that $\Px_n(\dd\xb_1,\ldots,\dd\xb_n)=\prod_{i=1}^n \PP(\dd\xb_i)$; we shall denote this distribution $\Px_n=\PP^{[n]}$.
In Sections~\ref{S:PP-distribution-d2-Xspherical} and \ref{S:F-distribution-d2-Xspherical} we show that $F(t; \PP^{[n]},\mu)$ can be expressed in the form of a double integral when $\PP$ is spherically symmetric. This will rely on the following property which is true for any $\PP$.

For any fixed $\ub  \in \RR^d$, any $t \geq 0$ and any $\Rb_n$ with i.i.d.\ points having the arbitrary distribution $\PP$, we have
\be
F(t; \PP^{[n]},\delta_\ub) &=& 1-\prod_{j=1}^n \PP\left\{ \ub  \notin \SB_d({\xb}_j,t)  \right\} \nonumber \\
&& \hspace{-3cm} = \, 1-\prod_{j=1}^n\left(1- \PP  \left\{ \ub  \in \SB_d({\xb}_j,t)  \right\} \right) = 1-\bigg(1-\PP \left\{ \|X - \ub\| \leq t \right\} \bigg)^n\,,
\label{eq:prod_partial}
\ee
where $X \simd \PP$.

The expression \eqref{eq:prod_partial} indicates that the calculations of the mean distance c.d.f.\ $F(t; \PP^{[n]},\mu)$ and then of the expected $(\mu,s)$-distortion require the calculation of the probability
\bea 
\PP \left\{ \|X - \ub\| \leq t \right\} = \PP \left\{\ub    \in \SB_d(X,t)  \right\}\,,
\eea
which depends on $\PP$, $\ub$, and $t$. Exchanging the order of integration in \eqref{eq:prod_partial_cdf} thus leads us to consider another distance c.d.f., in which $\ub$ is fixed and plays the role of a one-point quantiser, whereas $X$ is random. This symmetry disappears for random $n$-point sets, whose $n$ i.i.d.\ points are accounted for explicitly in \eqref{eq:prod_partial}. It is noticeable that this is the only place where the size $n$ appears.

\subsection{$\PP \left\{ \|X- \ub\| \leq t \right\}$ when $\PP$ is spherically symmetric}\label{S:PP-distribution-d2-Xspherical}

By definition, the random vector $X=(X_1, \ldots,X_d)$ is spherically symmetric if it can be represented as $X \eqd  R \cdot Z^{(d)} $, where $R=\|X\|$, $Z^{(d)}=(Z_1,\ldots,Z_d)$ is uniformly distributed on the unit sphere $\SS_{d-1}(1)$, and $R$ and $Z^{(d)}$ are independent.

When $\PP$ is spherically symmetric, the distribution of $\|X  - \ub \|$ for $X\simd \PP$ only depends on $\|\ub\|$ and $d$. This distribution is specified in Proposition~\ref{th:dist} below. In the following, $\beta_{a,b}$ denotes a random variable with the Beta-density
\bea
\varphi_{a,b}(t)=t^{a-1}(1-t)^{b-1}/B(a,b) \ \mbox{ for } 0\leq t \leq 1\,,
\eea
where $B(a,b)=\Gamma(a)\Gamma(b)/\Gamma(a+b)$ is the Beta-function, $a,b>0$. The c.d.f.\ of $\beta_{a,b}$ is $\Pr\{\beta_{a,b} \leq t\}=I_t(a,b)$, where $I_t(a,b)$ is the regularised incomplete Beta-function, for which we use the convention
\be\label{convention}
I_t(\cdot,\cdot)=   \left \{\begin{array}{ll}
    0 &\;\;  {\rm for}\;\; t\leq 0 \\
  1 & \;\; {\rm for}\;\; t \geq 1 \, .\\
  \end{array}
\right.
\ee
The following lemma (see, e.g., \cite[Sect.~2.2]{FangKN90}) will be used several times.

\begin{lemma}\label{L:projections-beta}
Let $X=(X_1,\ldots,X_d) \eqd  R \cdot Z^{(d)}$ be a spherically symmetric random vector in $\RR^d$. Then, for any $m\in\{1,\ldots,d-1\}$, we have
\bea
X^{(m)}=(X_1,\ldots,X_m) \eqd  R \cdot \sqrt{\beta_{m/2,(d-m)/2}}\cdot Z^{(m)} \,,
\eea
where $R=\|X\|$, $\beta_{m/2,(d-m)/2}$ and $Z^{(m)}$ are independent.  Moreover, the joint density of $V^{(m)}=X^{(m)}/R$ is
\bea
\varphi_m(v_1,\ldots,v_m)= \frac{\Gamma(d/2)}{\pi^{m/2}\,\Gamma((d-m)/2)} \, \left(1-\sum_{i=1}^m v_i^2 \right)^{(d-m)/2-1} \ \mbox{ for } \sum_{i=1}^m v_i^2\leq 1\,.
\eea
\end{lemma}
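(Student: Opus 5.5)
Since $X\eqd R\cdot Z^{(d)}$ with $R$ independent of $Z^{(d)}$, we have $X^{(m)}\eqd R\cdot Z^{(d,m)}$, where $Z^{(d,m)}=(Z_1,\ldots,Z_m)$ is the projection of the uniform vector on $\SS_{d-1}(1)$. It therefore suffices to prove the statement for $R\equiv 1$, i.e.\ for $Z^{(d)}$ itself. Note that $V^{(m)}=X^{(m)}/R=Z^{(d,m)}$ whenever $R>0$, and $R$ is independent of it. I will use the Gaussian representation $Z^{(d)}\eqd G/\|G\|$ with $G\simd\SN(\0b_d,\Ib_d)$; this follows from rotation invariance of $G$ and uniqueness of the rotation-invariant probability measure on the sphere.

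\emph{Product representation.} Split $G=(G_1,G_2)$ with $G_1\in\RR^m$ and $G_2\in\RR^{d-m}$, which are independent. Then
\[
Z^{(d,m)}=\frac{G_1}{\|G\|}=\frac{\|G_1\|}{\|G\|}\cdot\frac{G_1}{\|G_1\|}.
\]
Here $G_1/\|G_1\|\eqd Z^{(m)}$, and it is independent of $\|G_1\|$ (polar decomposition of the standard Gaussian in $\RR^m$) as well as of $G_2$. Hence it is independent of $\|G_1\|^2/\|G\|^2=\|G_1\|^2/(\|G_1\|^2+\|G_2\|^2)$. Since $\|G_1\|^2\simd\chi^2_m$ and $\|G_2\|^2\simd\chi^2_{d-m}$ are independent Gamma$(m/2)$ and Gamma$((d-m)/2)$ variables up to a common scale, the classical Gamma--Beta relation gives that this ratio is $\beta_{m/2,(d-m)/2}$. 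This yields $X^{(m)}\eqd R\sqrt{\beta_{m/2,(d-m)/2}}\,Z^{(m)}$ with the three factors independent.

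\emph{Density.} Take $V=\sqrt{\beta}\,Z^{(m)}$ with $\beta\simd\varphi_{m/2,(d-m)/2}$. The norm $\rho=\|V\|=\sqrt\beta$ has density $2\rho\,\varphi_{m/2,(d-m)/2}(\rho^2)=2\rho^{m-1}(1-\rho^2)^{(d-m)/2-1}/B(m/2,(d-m)/2)$. Since $V$ is spherically symmetric, its density at $v$ with $\|v\|=\rho$ is this radial density divided by the surface area $2\pi^{m/2}\rho^{m-1}/\Gamma(m/2)$ of $\SS_{m-1}(\rho)$. Simplifying $\Gamma(m/2)/(\pi^{m/2}B(m/2,(d-m)/2))=\Gamma(d/2)/(\pi^{m/2}\Gamma((d-m)/2))$ gives exactly the stated $\varphi_m$ on $\sum v_i^2\le 1$.

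The only delicate point is justifying the independence of $G_1/\|G_1\|$ from $(\|G_1\|,G_2)$, together with the Gamma--Beta fact. Both are standard: the first follows from rotation invariance of $G_1$, and the second from a change of variables $(u,w)\mapsto(u+w,u/(u+w))$. The case $R=0$ with positive probability is harmless for the distributional identity. For the density of $V^{(m)}$, one conditions on $R>0$ or simply reads $V^{(m)}$ as $Z^{(d,m)}$.
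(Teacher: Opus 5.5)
Your proof is correct: the reduction to $R\equiv1$, the representation $Z^{(d)}\eqd G/\|G\|$ with $G=(G_1,G_2)$ split into independent blocks, the Gamma--Beta relation, and the polar-coordinate computation of $\varphi_m$ all check out. The paper gives no proof of its own; it simply cites \cite[Sect.~2.2]{FangKN90}, where the result is derived by essentially this same Gaussian-representation and Beta (Dirichlet) argument, so your proposal matches the intended route.
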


The next proposition is the key ingredient in the derivation of exact and asymptotic results for the mean distance c.d.f.\ and the $(\mu,s)$-distortion.

\begin{proposition} \label{th:dist}
Assume that $d\geq 2$ and $X\simd\PP$ with $\PP$ spherically symmetric. Then, for any fixed $\ub \in \RR^d$ we have
\be
 \|X  - \ub \|^2 \eqd (\|\ub\|-R)^2 +4\,\|\ub\|\,R \, \beta_{\delta,\delta}\, , \label{eq:quantile2}
\ee
where $\delta=(d-1)/2$ and the random variables $R=\|X \|$ and $\beta_{\delta,\delta}$ are independent.

Moreover, when $\Rb_n\sim \Px_n=\PP_a^{[n]}$ with $\PP_a$ uniform on $\SS_{d-1}(a)$, $a\geq 0$, we have
\be\label{d2URn}
d^2(\ub,\Rb_n)\eqd (\|\ub\|-a)^2+4\,a\,\|\ub\|\,\zeta(n,d)\,,
\ee
where $\zeta(n,d)=\min_{i=1,\ldots,n} \zeta_i$ and the $\zeta_i\eqd \beta_{\delta,\delta}$ are i.i.d.
\end{proposition}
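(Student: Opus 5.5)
Expanding the square gives $\|X-\ub\|^2=\|\ub\|^2+R^2-2\,\ub\TT X$, so everything reduces to the law of the projection $\ub\TT X$. If $\ub=\0b_d$ the claim is immediate, since $\|X\|^2=R^2$ and the $\beta$-term vanishes. Otherwise, spherical symmetry lets us rotate $\ub$ onto $\|\ub\|e_1$ without changing the law of $X$. Then $\ub\TT X\eqd \|\ub\|X_1=\|\ub\|\,R\,Z_1$ with $R$ and $Z_1$ independent. We need the law of the first coordinate $Z_1$ of a uniform point on $\SS_{d-1}(1)$, and here I would invoke Lemma~\ref{L:projections-beta} with $m=1$. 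It says that $V_1=Z_1$ has density proportional to $(1-v^2)^{(d-3)/2}=(1-v^2)^{\delta-1}$ on $[-1,1]$, with $\delta=(d-1)/2$.

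The key step is to show $Z_1\eqd 1-2\beta_{\delta,\delta}$, i.e.\ $(1-Z_1)/2\eqd\beta_{\delta,\delta}$. I will prove it by change of variables. Put $w=(1-v)/2$, so that $v=1-2w$ and $1-v^2=(1-v)(1+v)=4w(1-w)$. The density of $W$ then becomes proportional to $w^{\delta-1}(1-w)^{\delta-1}$ on $[0,1]$, which is the $\mathrm{Beta}(\delta,\delta)$ density; the normalising constants must match because both are probability densities. We need $d\geq2$ so that $\delta>0$. For $d=2$ we have $\delta=1/2$ and the density stays integrable, so the lemma covers this case as well.

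Substituting back gives
$\|X-\ub\|^2\eqd\|\ub\|^2+R^2-2\|\ub\|R(1-2\beta_{\delta,\delta})=(\|\ub\|-R)^2+4\|\ub\|R\,\beta_{\delta,\delta}$.
Independence of $R$ and $\beta_{\delta,\delta}$ is inherited from the independence of $R$ and $Z^{(d)}$ in the representation $X\eqd R\cdot Z^{(d)}$. This establishes \eqref{eq:quantile2}.

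For \eqref{d2URn}, take $\PP_a$, so that $R\equiv a$ is deterministic. The points $\xb_i=aZ^{(d)}_i$ are i.i.d., hence so are $\zeta_i:=(1-\ub\TT Z^{(d)}_i/\|\ub\|)/2$ when $\ub\neq\0b_d$, and by the argument above each $\zeta_i\eqd\beta_{\delta,\delta}$. This construction holds jointly in $i$ (not only marginally), and this is the main point to watch: it gives the exact identity $\|\xb_i-\ub\|^2=(\|\ub\|-a)^2+4a\|\ub\|\zeta_i$ for every $i$ simultaneously. Since $a\|\ub\|\geq0$, the map $\zeta\mapsto(\|\ub\|-a)^2+4a\|\ub\|\zeta$ is nondecreasing, so the minimum over $i$ commutes with it, and $d^2(\ub,\Rb_n)$ equals $(\|\ub\|-a)^2+4a\|\ub\|\min_i\zeta_i$. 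When $\ub=\0b_d$ or $a=0$, both sides are deterministic and equal to $(\|\ub\|-a)^2$, so the identity holds trivially.
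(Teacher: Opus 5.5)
Your proof is correct and follows the paper's argument: rotate $\ub$ onto $\|\ub\|e_1$, expand the square, and use Lemma~\ref{L:projections-beta} with $m=1$ to get $(1-Z_1)/2\eqd\beta_{\delta,\delta}$ independent of $R$, then apply this to the $n$ i.i.d.\ points. You also make explicit two steps the paper leaves implicit: the change of variables $w=(1-v)/2$ that turns $\varphi_1$ into the $\mathrm{Beta}(\delta,\delta)$ density, and the fact that the minimum commutes with the nondecreasing map $\zeta\mapsto(\|\ub\|-a)^2+4a\|\ub\|\zeta$ (your joint coupling is fine, though marginal equality plus independence of the points would already suffice).
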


\begin{proof}
As $X$ is spherically symmetric, the distribution of $\|X  - \ub \| $ only depends on $\ub $ through $r=\|\ub\|$, and without any loss of generality we can assume that $\ub=(r,0, \ldots, 0)$. We thus have
\bea 
 \|X \!-\! \ub \|^2
\eqd  (r\!-\!X_1)^2\!+\! \sum_{j=2}^d X_j^2 =  r^2\! -\! 2\,r X_1\!  +\!R^2= (R\!-\!r)^2 \!+\!2\,rR(1\!-\!Z_1)\,, \nonumber
\eea
where we have denoted $Z_1=X_1/R$. From Lemma~\ref{L:projections-beta}, $R$ and $Z_1$ are independent and the expression of $\varphi_1(\cdot)$ gives  $(1-Z_1)/2 \eqd  \beta_{\delta,\delta}$, which yields \eqref{eq:quantile2}.
Since the $n$ points of $\Rb_n$ are i.i.d.\ with $\PP_a$, the representation \eqref{eq:quantile2} yields \eqref{d2URn}.
\end{proof}

From \eqref{eq:quantile2},  for all $ t\geq 0$ we have
\bea
\PP \left\{ \| X \!-\!\ub \|\! \leq \! { t } \right\}\!&=&  \! \Pr  \left\{(R-r)^2 +4\,Rr \, \beta_{\delta,\delta}\,  \leq  { t ^2} \right\}  \,,
\eea
with $r=\|\ub\| > 0$, and thus
\bea
 \PP  \left\{ \| X \!-\!\ub \|\! \leq \! { t } \right\}&=&\Pr  \left\{\, \beta_{\delta,\delta}\,  \leq  \frac{t^2 -(R-r)^2 }{4\,Rr}  \right\}\,. \;\;
\eea
By conditioning on the random variable $R= \|X \|$ and denoting $\Phi(\cdot )$ the c.d.f.\ of $R=\|X \|$, we obtain
\be\label{beta_approx_for_cube}
\PP  \left\{ \| X \!-\!\ub \|\! \leq \! { t } \right\}\!&=& \int_{\rho\geq 0}
\Pr  \left\{\, \beta_{\delta,\delta}\,  \leq  \frac{t^2 -(\rho-r)^2 }{4\,\rho r}  \right\} \dd\Phi(\rho )  \nonumber \\
&=& \int_{\rho\geq 0} I_\upsilon(\delta,\delta) \,\dd\Phi(\rho ) \,,
\ee
where $\delta=(d-1)/2$,
\be\label{v}
\upsilon=\upsilon(t,\rho,r)=[t^2 -(\rho-r)^2]/(4 \rho r)
\ee
and $I_\upsilon(\cdot,\cdot )$ is the regularised incomplete beta-function with the added convention~\eqref{convention}.
The integration in \eqref{beta_approx_for_cube} is over the support of the distribution of $R=\|X \|$.
If $r=\|\ub\| = 0$, then \eqref{beta_approx_for_cube} reduces to
\bea
    \PP  \left\{ \| X\!-\!\ub \|\! \leq \! { t } \right\} = \PP  \left\{\|X\| \leq t \right\} = \int_{0}^{{t}} \dd\Phi(\rho) \,.
\eea

When $\PP=\PP_a$, the uniform distribution on the sphere $\SS_{d-1}(a)$, the expression \eqref{beta_approx_for_cube} with the convention \eqref{convention} can be simplified as follows:
\be\label{PPaa}
\PP_a\left\{ \| X-\ub \| \leq t \right\} =
\left\{\begin{array}{ll}
0                      & \mbox{if } t\leq |r-a| \,,\\
I_\upsilon(\delta,\delta) & \mbox{if } |r-a| \leq t \leq a+r \,, \\
1                      & \mbox{if } a+r \leq t \,,
\end{array} \right.
\ee
where $r=\|\ub\|$ and $\upsilon=\upsilon(t,a,r)$ is given by \eqref{v}.

\subsection{$F(t; \PP^{[n]},\mu)$ and $D_{\mu,s}(\PP^{[n]})$ when $\PP$ is spherically symmetric}\label{S:F-distribution-d2-Xspherical}

From \eqref{eq:prod_partial_cdf} and \eqref{eq:prod_partial}, the mean distance c.d.f.\ $F(\cdot\,; \PP^{[n]},\mu)$ can be calculated explicitly as
\be \label{eq:prod_partial_cdf4}
F(t; \PP^{[n]},\mu) = 1- \Ex_\mu \left\{ \bigg(1-\PP \left\{ \| X-U \| \leq t \right\} \bigg)^n \right\}\,,
\ee
where $U\simd \mu$, $X \simd \PP$, and $X$ and $U$ are independent.
When $\PP$ is spherically symmetric, $\PP \left\{ \|X- \ub \| \leq t \right\} $ is given by \eqref{beta_approx_for_cube} which only depends on $\ub$ through $r=\|\ub\|$, and the expectation with respect to $U$ in \eqref{eq:prod_partial_cdf4} is reduced to an expectation with respect to the distribution of $\|U\|$. The calculation of $F(t; \PP^{[n]},\mu)$ thus amounts to a double integration, with respect to the distributions of $\|X\|$ and $\|U\|$.
From \eqref{Ls-mean-meandcdf}, the calculation of $D_{\mu,s}(\PP^{[n]})$ requires an additional integration with respect to $t$,
\be
D_{\mu,s}(\PP^{[n]}) &=&  s \int_{t\geq 0} t^{s-1} \Ex_\mu \left\{ \bigg(1-\PP \left\{ \| X-U \| \leq t \right\} \bigg)^n \right\} \,\dd t \,, \nonumber \\
&=& s \int_{t\geq 0} t^{s-1} \int_{r\geq 0} H(r,t;\Phi,n)\, \dd\Psi(r)\,\dd t \,, \label{Ls-mean-final}
\ee
where
\bea 
H(r,t;\Phi,n) =\bigg(1-\int_{\rho\geq 0} I_\upsilon(\delta,\delta) \,\dd\Phi(\rho ) \bigg)^n \,,
\eea
with $\delta=(d-1)/2$, $\upsilon=\upsilon(t,\rho,r)$ given by \eqref{v}, $\Psi(\cdot)$ the c.d.f.\ of $\|U\|$ for $U\simd\mu$ and $\Phi(\cdot)$ the c.d.f.\ of $\|X\|$ for $X\simd\PP$.

For given $\Psi(\cdot)$ and $\Phi(\cdot)$, the integrals required to compute $D_{\mu,s}(\PP^{[n]})$ can be evaluated with arbitrary precision. 
The measure $\mu$ does not need to be spherically symmetric; however, $\Psi(\cdot)$ can be challenging to derive in general cases. When $\mu$ is spherically symmetric, a distribution with minimum expected distortion is also spherically symmetric; see Remark~\ref{R:rotational-invariance}. In the next section, we demonstrate that determining such a distribution forms a convex problem, for which we derive a necessary and sufficient optimality condition and propose an algorithm inspired by optimal design theory.

\begin{remark}
The expression \eqref{Ls-mean-final} gives the expected $(\mu,s)$-distortion for random quantisers. Therefore, according to the paradigm of the so-called ``probabilistic method'' (see, e.g., \cite{AlonS2000}), for any $\PP$ there exists at least one non-random $n$-point set $\Xb_n$ with $(\mu,s)$-distortion $D_{\mu,s}(\Xb_n)\leq D_{\mu,s}(\PP^{[n]})$. See Section~1.3 of the supplementary material for an illustration involving full factorial designs.
\fin
\end{remark}

\subsection{An exact result for $s=2$ when $\PP$ is uniform on $\SS_{d-1}(a)$}\label{S:s=2:exact}

When $s=2$ and $\PP$ is the uniform measure $\PP_a$ on a sphere $\SS_{d-1}(a)$, Proposition~\ref{th:dist} can be directly applied to express both the expected distortion $D_{\mu,2}(\PP_a^{[n]})$ and the optimal radius $a^*(d,n,2)$.

\begin{corollary}
\label{Coro:astar-s2-exact}
Assume that $d\geq 2$, that $\mu$ is any probability measure on $\RR^d$ with finite second moment, and that $\Rb_n\simd\PP_a^{[n]}$ with $\PP_a$ uniform on $\SS_{d-1}(a)$, $a\geq0$. Then the expected $(\mu,2)$-distortion is exactly
\be\label{exact-D2-Pa}
D_{\mu,2}(\PP_a^{[n]})
=\Ex_\mu\{\|U\|^2\}+a^2
-2a\,\Ex_\mu\{\|U\|\}\,(1-2m_{n,d})\,,
\ee
where
\be \label{exact-mnd}
m_{n,d}=\int_0^1\left[1-I_z(\delta,\delta)\right]^n\,\dd z\,,
\qquad \delta=\frac{d-1}{2}\,.
\ee
The unique minimiser of $D_{\mu,2}(\PP_a^{[n]})$ with respect to $a\geq0$ is
\be\label{exact-a2-Pa}
a^*(d,n,2)
=\Ex_\mu\{\|U\|\}\,(1-2m_{n,d})
\ee
and the corresponding minimum expected distortion is
\be\label{exact-min-D2-Pa}
D_{\mu,2}(\PP_{a^*}^{[n]})
=\Ex_\mu\{\|U\|^2\}-[a^*(d,n,2)]^2\,.
\ee
\end{corollary}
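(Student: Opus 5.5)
The plan is to work directly from the representation \eqref{d2URn} of Proposition~\ref{th:dist}, conditionally on $U=\ub$, and then integrate over $\mu$. For fixed $\ub$ with $r=\|\ub\|$, \eqref{d2URn} gives
\bea
\Ex_{\PP_a^{[n]}}\{d^2(\ub,\Rb_n)\} = (r-a)^2 + 4\,a\,r\,\Ex\{\zeta(n,d)\}\,,
\eea
with $\zeta(n,d)$ the minimum of $n$ i.i.d.\ $\beta_{\delta,\delta}$ variables. Since $\zeta(n,d)\in[0,1]$ is nonnegative, $\Ex\{\zeta(n,d)\}=\int_0^1 \Pr\{\zeta(n,d)>z\}\,\dd z=\int_0^1[1-I_z(\delta,\delta)]^n\,\dd z=m_{n,d}$, using independence of the $\zeta_i$. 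This is the same layer-cake identity as \eqref{moment-ds-cdf} with $s=1$. The case $r=0$ or $a=0$ is consistent with the formula, since then $d^2(\ub,\Rb_n)=(r-a)^2$.

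Next I expand $(r-a)^2+4arm_{n,d}=r^2+a^2-2ar(1-2m_{n,d})$. I then take the expectation over $U\simd\mu$, which is justified by Fubini/Tonelli since all terms are nonnegative or integrable given the finite second moment. This yields \eqref{exact-D2-Pa} by linearity, because $D_{\mu,2}(\PP_a^{[n]})=\Ex_\mu\{\Ex_{\PP_a^{[n]}}\{d^2(U,\Rb_n)\}\}$ by swapping expectations as in \eqref{eq:prod_partial_cdf}.

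The expression \eqref{exact-D2-Pa} is a strictly convex quadratic in $a$ with leading coefficient $1$. Its unconstrained minimiser is $c=\Ex_\mu\{\|U\|\}(1-2m_{n,d})$. To conclude that \eqref{exact-a2-Pa} is the minimiser on $a\geq0$, I need $c\geq0$, i.e.\ $m_{n,d}\le 1/2$. This holds because $m_{n,d}\le m_{1,d}=\Ex\{\beta_{\delta,\delta}\}=1/2$: the integrand $[1-I_z]^n$ decreases in $n$, and the symmetric Beta has mean $1/2$. Plugging $a=c$ back in gives $\Ex_\mu\{\|U\|^2\}+c^2-2c^2$, which is \eqref{exact-min-D2-Pa}. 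Uniqueness follows from strict convexity.

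None of the steps is a real obstacle. The only delicate points are justifying the interchange of expectations, which follows from the finite second moment, and checking $1-2m_{n,d}\ge0$ so that the constraint $a\ge0$ is inactive.
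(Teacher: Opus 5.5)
Your proposal is correct and follows essentially the same route as the paper: condition on $U=\ub$ in \eqref{d2URn}, identify $m_{n,d}=\Ex\{\zeta(n,d)\}$ via the tail integral, average over $\mu$, and use $m_{n,d}\le 1/2$ so that the vertex of the strictly convex quadratic in $a$ lies in $a\ge0$. Your justification of $m_{n,d}\le1/2$ by monotonicity of $[1-I_z(\delta,\delta)]^n$ in $n$ is equivalent to the paper's observation that $\zeta(n,d)\le\zeta_1$, which gives $m_{n,d}\le\Ex\{\zeta_1\}=1/2$.
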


\begin{proof}
Conditioning on $U=\ub$ in \eqref{d2URn} and then averaging with respect to $U$ give
\bea
D_{\mu,2}(\PP_a^{[n]})
&=&\Ex_\mu\left\{(\|U\|-a)^2
+4a\,\|U\|\,m_{n,d}\right\}\,,
\eea
with $m_{n,d}= \Ex\{\zeta(n,d)\}=\int_0^1\left[1-I_z(\delta,\delta)\right]^n\,\dd z$, which yields \eqref{exact-D2-Pa}. Moreover,
$m_{n,d}\leq\Ex\{\zeta_1\}=1/2$, so that the minimiser of this strictly convex quadratic function of $a$ is nonnegative and is given by \eqref{exact-a2-Pa}. Substitution gives \eqref{exact-min-D2-Pa}. Note that the dependence of the optimal radius on $\mu$ is entirely through the mean radius $\Ex_\mu\{\|U\|\}$.
\end{proof}

\section{Optimisation over all spherically symmetric distributions}\label{S:all-radial-optimisation}

\subsection{A necessary and sufficient condition for optimality}\label{S:NSC}

For $\PP$ a spherically symmetric distribution with c.d.f.\ $\Phi(\cdot)$, denote
\bea 
A(\Phi;r,t) =\int_{\rho\geq 0} I_\upsilon(\delta,\delta) \,\dd\Phi(\rho )
\eea
with $\delta=(d-1)/2$ and $\upsilon=\upsilon(t,\rho,r)$ given by \eqref{v}, so that the expected distortion $D_{\mu,s}(\PP^{[n]})$ equals
\be\label{eq:radial-functional}
\SJ_{\mu,s}(\Phi;n) = s \int_{t\geq 0} t^{s-1} \int_{r\geq 0} \left[1-A(\Phi;r,t)\right]^n\, \dd\Psi(r)\,\dd t
\ee
with $\Psi(\cdot)$ the c.d.f.\ of $\|U\|$ for $U\simd\mu$, see \eqref{Ls-mean-final}. We denote by $\SE_{\mu,s}(\Phi,n)$ the expected-distortion efficiency
\be\label{ED-efficiency}
\SE_{\mu,s}(\Phi;n)=\frac{\SJ_{\mu,s}(\Phi^*;n)}{\SJ_{\mu,s}(\Phi;n)}\,,
\ee
where $\Phi^*$ is an optimal c.d.f.\ minimising $\SJ_{\mu,s}(\Phi;n)$, with $\SE_{\mu,s}(\Phi,n)\in[0,1]$.

\begin{definition}
    For any $n\geq 1$ and $s>0$, and for $\Phi(\cdot)$ the c.d.f.\ of $\|X\|$ with $X\sim \PP$ spherically symmetric, we define the {\em sensitivity function} associated with the distortion $D_{\mu,s}(\PP^{[n]})$ by
    \be
S_{\mu,s}(\rho;\Phi,n) = ns \int_{t\geq 0} t^{s-1}\int_{r\geq 0}
\left[1-A(\Phi;r,t)\right]^{n-1}\,I_\upsilon(\delta,\delta)\,\dd\Psi(r)\,\dd t \,,
\label{eq:radial-sensitivity}
\ee
where $\delta=(d-1)/2$, $\upsilon=\upsilon(t,\rho,r)$ given by \eqref{v} and $\Psi(\cdot)$ is the c.d.f.\ of $\|U\|$ with $U\simd\mu$.
\end{definition}

The functional $\SJ_{\mu,s}(\cdot\,;n)$ and the associated sensitivity function $S_{\mu,s}(\cdot\,;\Phi,n)$ satisfy the following properties (the denomination ``Equivalence Theorem'' is a tribute to the result of Kiefer and Wolfowitz \cite{KieferW60} in optimal design of experiments).

\begin{theorem}[Equivalence Theorem]\label{Th:radial-equivalence}
For every $n\geq1$ and $s>0$, the functional $\Phi\mapsto \SJ_{\mu,s}(\Phi;n)$ is convex. If $\mu$ is supported in $\SB_d(r_{\max})$, then the infimum distortion over all spherically symmetric distributions is attained by a probability measure such that $\Phi$ is supported on $[0,r_{\max}]$. A spherically symmetric distribution $\PP^*$ such that $\Phi^*$ is supported on this interval is globally optimal among all spherically symmetric distributions if and only if
\be
S_{\mu,s}(\rho;\Phi^*,n) \leq \overline{S}_{\mu,s}(\Phi^*,n)
\quad\hbox{for every }\rho\in[0,r_{\max}]\,,
\label{eq:radial-equivalence-condition}
\ee
where
\bea
\overline{S}_{\mu,s}(\Phi,n) =
\int_{\rho\geq 0} S_{\mu,s}(\rho;\Phi,n) \,\dd\Phi(\rho) \,.
\eea
In \eqref{eq:radial-equivalence-condition}, equality holds for $\Phi^*$-almost every $\rho$ and the computable quantity
\be
g_{\mu,s}(\Phi;n)=\max_{0\leq\rho\leq r_{\max}} S_{\mu,s}(\rho;\Phi,n)
-\overline{S}_{\mu,s}(\Phi,n)
\label{eq:radial-gap}
\ee
provides a global optimality certificate in terms of expected-distortion efficiency \eqref{ED-efficiency}:
\be
1 \geq \SE_{\mu,s}(\Phi;n) \geq 1 - \frac{g_{\mu,s}(\Phi;n)}{\SJ_{\mu,s}(\Phi;n)} \,.
\label{eq:radial-gap-bound}
\ee
\end{theorem}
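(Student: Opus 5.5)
The proof rests on one structural fact: $A(\Phi;r,t)$ is linear in $\Phi$, and $x\mapsto(1-x)^n$ is convex on $[0,1]$. So for $\Phi_\alpha=(1-\alpha)\Phi_0+\alpha\Phi_1$, the integrand $[1-A(\Phi_\alpha;r,t)]^n$ is convex in $\alpha$ pointwise in $(r,t)$. Integrating against the nonnegative measure $s\,t^{s-1}\dd t\,\dd\Psi(r)$ then gives convexity of $\SJ_{\mu,s}(\cdot\,;n)$; the integral is finite whenever it is finite at the endpoints.

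Next comes the directional derivative. Differentiating under the integral sign at $\alpha=0$ gives
\[
\frac{\dd}{\dd\alpha}\SJ_{\mu,s}(\Phi_\alpha;n)\Big|_{\alpha=0^+}
=-ns\int t^{s-1}\int[1-A(\Phi_0;r,t)]^{n-1}\,[A(\Phi_1;r,t)-A(\Phi_0;r,t)]\,\dd\Psi(r)\,\dd t .
\]
By Fubini this equals
\[
-\int S_{\mu,s}(\rho;\Phi_0,n)\,\dd\Phi_1(\rho)+\overline{S}_{\mu,s}(\Phi_0,n).
\]
Differentiation is justified by monotone convergence of the difference quotients of a convex function, which decrease as $\alpha\downarrow0$. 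This requires $\SJ_{\mu,s}(\Phi_0;n)<\infty$, which holds for $\Phi_0$ supported on $[0,r_{\max}]$ because $\mu$ has bounded support.

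\emph{Reduction to $[0,r_{\max}]$.} Let $\Phi$ be arbitrary and let $\tilde\Phi$ be its push-forward under $\rho\mapsto\min(\rho,r_{\max})$. For fixed $\ub$ with $\|\ub\|\le r_{\max}$ and any $\xb$, radially projecting $\xb$ onto $\SB_d(r_{\max})$ does not increase $\|\xb-\ub\|$, since projection onto a convex set containing $\ub$ is nonexpansive. Hence $A(\tilde\Phi;r,t)\ge A(\Phi;r,t)$ for all $r\le r_{\max}$, so $\SJ_{\mu,s}(\tilde\Phi;n)\le\SJ_{\mu,s}(\Phi;n)$. Existence of a minimiser follows from two facts. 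First, the probability measures on $[0,r_{\max}]$ form a weakly compact set. Second, $\Phi\mapsto\SJ_{\mu,s}$ is lower semicontinuous, indeed continuous: $I_\upsilon(\delta,\delta)$ is continuous in $\rho$ for $r,t>0$ since $\upsilon$ is continuous in $\rho>0$ and $I$ is continuous; the behaviour at $\rho=0$ or $r=0$ needs a small check. Then dominated convergence applies, because the integrand is bounded by $s\,t^{s-1}$ and vanishes for $t>2r_{\max}$.

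\emph{Equivalence.} By convexity, $\Phi^*$ is optimal iff the directional derivative toward every $\Phi_1$ supported on $[0,r_{\max}]$ is $\ge0$, that is, iff
\[
\int S_{\mu,s}(\rho;\Phi^*,n)\,\dd\Phi_1(\rho)\le\overline S_{\mu,s}(\Phi^*,n)\quad\text{for all such }\Phi_1 .
\]
Taking Dirac masses $\Phi_1=\delta_\rho$ gives \eqref{eq:radial-equivalence-condition}, and conversely the Dirac case implies the general case by integration. Integrating \eqref{eq:radial-equivalence-condition} against $\Phi^*$ gives equality of the averages, so the inequality holds with equality $\Phi^*$-a.e.

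\emph{Certificate.} Convexity gives
\[
\SJ(\Phi^*)\ge\SJ(\Phi)+\text{directional derivative toward }\Phi^*
=\SJ(\Phi)-\int S\,\dd\Phi^*+\overline S(\Phi,n)\ge\SJ(\Phi)-g_{\mu,s}(\Phi;n).
\]
Dividing by $\SJ(\Phi)$ yields \eqref{eq:radial-gap-bound}, and the upper bound $1$ is trivial.

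The main obstacle is the technical justification of the derivative interchange and continuity. The delicate points are the discontinuities of $I_\upsilon$ at $\rho=0$ (or $r=0$), where $\upsilon$ is undefined and the convention reduces it to an indicator, and possible mass of $\Psi$ at $0$. For the derivative I rely on monotone difference quotients rather than dominated convergence. For existence, lower semicontinuity suffices, and I would obtain it via Fatou's lemma together with the upper semicontinuity of $A$ in $\Phi$, which holds because $\{\rho:\|x-u\|\le t\}$ is closed.
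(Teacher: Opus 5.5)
Your proposal is correct and follows essentially the same route as the paper. The shared steps are: convexity from the linearity of $\Phi\mapsto A(\Phi;r,t)$ and convexity of $x\mapsto(1-x)^n$; reduction to $[0,r_{\max}]$ by radial projection, with weak compactness giving existence; the directional-derivative formula tested on Dirac directions for the equivalence; and the convexity inequality from $\Phi$ towards $\Phi^*$ for the efficiency certificate. You use nonexpansiveness of the metric projection onto $\SB_d(r_{\max})$ instead of the paper's identity $\|\ub-\rho\,\zb\|^2-\|\ub-r_{\max}\,\zb\|^2=(\rho-r_{\max})[\rho+r_{\max}-2\,\ub\TT\zb]$, and your monotone difference quotients and Fatou-based lower semicontinuity justify the differentiation and continuity steps that the paper only asserts.
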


\begin{proof}
The map $\Phi\mapsto A(\Phi;r,t)$ is linear, while $x\mapsto(1-x)^n$ is convex on $[0,1]$. Integration in \eqref{eq:radial-functional} preserves convexity. If $\rho>r_{\max}$, replacing a design point $\xb=\rho \zb$ with $\|\zb\|=1$ by $\xb'=r_{\max}\zb$ cannot increase its distance from any $\ub\in\SB_d(r_{\max})$, since
\bea
\|\ub-\rho\, \zb\|^2-\|\ub-r_{\max}\,\zb\|^2
=(\rho-r_{\max})[\rho+r_{\max}-2\, \ub\TT \zb]\geq0.
\eea
Hence every spherically symmetric distribution can be projected onto $[0,r_{\max}]$ without increasing the distortion. The set of probability measures on this compact interval is weakly compact, and the distortion is continuous under weak convergence, a minimiser thus exists.

For two c.d.f.\ $\Phi$ and $\Phi'$, differentiation along
$\Phi_\epsilon=(1-\epsilon)\Phi+\epsilon\Phi'$ gives
\be
\frac{\dd}{\dd\epsilon} \SJ_{\mu,s}(\Phi_\epsilon;n)
\bigg|_{\epsilon=0+}
=\overline{S}_{\mu,s}(\Phi,n) -
\int_{\rho\geq 0} S_{\mu,s}(\rho;\Phi,n)\,\dd\Phi'(\rho) \,.
\label{eq:radial-directional-derivative}
\ee
Convexity implies that $\Phi^*$ is optimal if and only if this derivative is non-negative for every $\Phi'$. Taking $\Phi'$ to be a point mass at $\rho$ shows that the condition \eqref{eq:radial-equivalence-condition} is necessary. Conversely, the linearity of \eqref{eq:radial-directional-derivative} in $\Phi-\Phi'$ implies that when \eqref{eq:radial-equivalence-condition} is satisfied, then $\dd\SJ_{\mu,s}[(1-\epsilon)\Phi^*+\epsilon\Phi';n]/\dd\epsilon
\big|_{\epsilon=0+} \geq 0$ for any $\Phi'$, so that $\Phi^*$ is optimal. The condition \eqref{eq:radial-equivalence-condition} is thus sufficient. Since its left-hand side has $\Phi^*$-average $\overline{S}_{\mu,s}(\Phi^*,n)$, equality must hold $\Phi^*$-almost everywhere.
Finally, the optimality of $\Phi^*$ and convexity imply, for any $\Phi$,
\bea
\SJ_{\mu,s}(\Phi;n) \geq \SJ_{\mu,s}(\Phi^*;n) \!\!\!\! &\geq& \!\!\!\!\SJ_{\mu,s}(\Phi;n) + \dd\SJ_{\mu,s}[(1-\epsilon)\Phi+\epsilon\Phi^*;n]/\dd\epsilon
\big|_{\epsilon=0+} \\
\!\!\!\! &\geq& \!\!\!\! \SJ_{\mu,s}(\Phi;n) + \min_{0\leq\rho\leq r_{\max}} \left[\overline{S}_{\mu,s}(\Phi,n) - S_{\mu,s}(\rho;\Phi,n)
\right]\,,
\eea
that is, \eqref{eq:radial-gap-bound}.
\end{proof}

\begin{remark}\label{R:rotational-invariance} Spherical symmetry of $\PP$ is not required for convexity. Indeed, the expressions in \eqref{Ls-mean-meandcdf}, \eqref{eq:prod_partial_cdf}, and \eqref{eq:prod_partial} show that the functional
$\PP\mapsto D_{\mu,s}(\PP^{[n]})$ is convex for any measure $\mu$ on $\SX$, any $n\geq 1$ and $s>0$. 
However, exploiting convexity to derive optimal distributions is significantly simplified under the spherical symmetry condition. This is not restrictive when
$\mu$ is spherically symmetric. Indeed, when $\mu$ is invariant under rotations, since $D_{\mu,s}(\PP^{[n]})$ is convex in $\PP$ and invariant under rotations of $\PP$, by a standard symmetrisation argument there exists an optimal $\PP^*$ which is rotationally invariant.
\fin
\end{remark}

As stated in the next corollary, \eqref{eq:radial-equivalence-condition} provides a direct numerical test of whether optimisation over a single sphere has lost anything relative to optimisation over arbitrary radial mixtures.

\begin{corollary}\label{Coro:one-sphere-radial-optimality}
The distribution concentrated at one radius $a\in[0,r_{\max}]$ is globally optimal among all radial distributions if and only if
\bea
S_{\mu,s}(\rho;\delta_a,n)\leq
S_{\mu,s}(a;\delta_a,n)
\quad\hbox{for all }\rho\in[0,r_{\max}].
\eea
\end{corollary}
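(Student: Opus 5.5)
The plan is to specialise Theorem~\ref{Th:radial-equivalence} to $\Phi^*=\delta_a$, the c.d.f.\ of the point mass at $a\in[0,r_{\max}]$. Its support lies in $[0,r_{\max}]$, so the theorem applies. It says that $\PP_a$ is globally optimal among all spherically symmetric distributions if and only if
\[
S_{\mu,s}(\rho;\delta_a,n)\leq \overline{S}_{\mu,s}(\delta_a,n)
\quad\hbox{for every }\rho\in[0,r_{\max}]\,.
\]
So the only work is to evaluate the average $\overline{S}_{\mu,s}(\delta_a,n)$ for a Dirac mass.

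By definition, $\overline{S}_{\mu,s}(\Phi,n)=\int S_{\mu,s}(\rho;\Phi,n)\,\dd\Phi(\rho)$. For $\Phi=\delta_a$ this integral reduces to evaluation at the atom, so $\overline{S}_{\mu,s}(\delta_a,n)=S_{\mu,s}(a;\delta_a,n)$. This requires $S_{\mu,s}(a;\delta_a,n)$ to be a well-defined finite number, which holds because the integrand in \eqref{eq:radial-sensitivity} is bounded by $ns\,t^{s-1}[1-A(\delta_a;r,t)]^{n-1}$. For $t>a+r$ we have $A(\delta_a;r,t)=1$ by \eqref{PPaa}, and $r\le r_{\max}$ because $\mu$ is supported in $\SB_d(r_{\max})$. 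Hence the $t$-integral effectively runs over $[0,a+r_{\max}]$ and is finite for $s>0$. Substituting this value into the equivalence condition gives exactly the stated criterion, and both directions are inherited from the ``if and only if'' of the theorem.

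Two details remain. First, ``radial distributions'' in the corollary means spherically symmetric distributions, identified with their radial c.d.f.\ $\Phi$. Second, the optimality in the theorem is over distributions supported on $[0,r_{\max}]$. The first part of Theorem~\ref{Th:radial-equivalence} shows that restricting to this interval loses nothing, since any radial distribution can be projected onto it without increasing the distortion. Global optimality over all radial distributions is therefore equivalent to optimality over those supported on $[0,r_{\max}]$.

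The only step needing care is this finiteness and evaluation of $S_{\mu,s}$ at the atom. With the support restriction it is routine, so no real obstacle is expected.
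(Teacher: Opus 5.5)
Your proposal is correct and follows the paper's route exactly: the paper presents the corollary as the immediate specialisation of Theorem~\ref{Th:radial-equivalence} to $\Phi^*=\delta_a$, using $\overline{S}_{\mu,s}(\delta_a,n)=S_{\mu,s}(a;\delta_a,n)$. One correction to your finiteness remark: cutting the $t$-integral at $a+r_{\max}$ needs $n\geq 2$, because for $n=1$ the factor $[1-A(\delta_a;r,t)]^{n-1}$ is identically $1$, so $S_{\mu,s}(\rho;\delta_a,1)=+\infty$ for every $\rho$; this degeneracy is inherited from the theorem itself and can be removed by replacing $I_\upsilon$ with $I_\upsilon-1$ in \eqref{eq:radial-sensitivity}, which for $n\geq2$ only shifts $S_{\mu,s}$ by a finite, $\rho$-independent constant and so leaves the criterion unchanged.
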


\vsp
For unbounded target distributions, the  equivalence condition of Theorem~\ref{Th:radial-equivalence} remains valid whenever the differentiations above are justified by the relevant moment bounds. Once an optimum $\Phi^*$ has been determined on $[0,r_{\max}]$ for a large enough $r_{\max}$, optimality over $\RR^+$ is guaranteed when \eqref{eq:radial-equivalence-condition} is satisfied for all $\rho\geq r_{\max}$.

\subsection{An optimisation algorithm}\label{S:NSC-algorithm}

Theorem~\ref{Th:radial-equivalence} yields a constructive procedure. Starting from a finitely supported measure with c.d.f.\ $\Phi_k$, calculate
\be\label{rho_k}
\rho_k\in\Arg\max_{0\leq\rho\leq r_{\max}}
S_{\mu,s}(\rho;\Phi_k,n) \,.
\ee
If $g_{\mu,s}(\Phi_k;n)$ is below a prescribed tolerance, \eqref{eq:radial-gap-bound} certifies near-optimality. Otherwise set
\be
\Phi_{k+1}=(1-\gamma_k)\Phi_k+\gamma_k\delta_{\rho_k},
\qquad
\gamma_k\in\Arg\min_{0\leq\gamma\leq1}
\SJ_{\mu,s}[(1-\gamma)\Phi_k+\gamma\delta_{\rho_k};n].
\label{eq:radial-Frank-Wolfe}
\ee
This is the classical conditional-gradient, or vertex-direction, algorithm on the space of probability measures used in optimal design \cite{Fedorov72, Wynn70}. Each iteration requires only a one-dimensional maximisation and a one-dimensional line search, preserves finite support, decreases the distortion, and supplies the rigorous stopping certificate $g_{\mu,s}(\Phi_k;n)$. It can therefore determine whether the best radial law is a single sphere or a mixture of a small number of spheres, without imposing that structure in advance. Several variants of the algorithm have been proposed in the literature; see, e.g., \cite[Sect.~9.1]{PP2013}. In the Matlab script {\tt Optimal\_Sphere\_Mixture.m} available at
\url{https://github.com/lpronzato-prog/Spherically-symmetric-random-designs},
we exploit the linearity in $\Phi$ of the map $\Phi\mapsto A(\Phi;r,t)$ to update
$\SJ_{\mu,s}(\Phi_k;n)$ and $S_{\mu,s}(\rho;\Phi_k,n)$,
and alternate between vertex-direction iterations \eqref{eq:radial-Frank-Wolfe} and vertex-exchange iterations defined by:
\bea
\Phi_{k+1} = \Phi_k + \gamma_k (\delta_{\rho_k} - \delta_{\rho_k^-})\,,
\eea
where $\gamma_k \in \Arg\min_{0 \leq \gamma \leq w_k^-} \SJ_{\mu,s}[\Phi_k + \gamma (\delta_{\rho_k} - \delta_{\rho_k^-}); n]$.
Here, $\rho_k$ is given by \eqref{rho_k}, $\rho_k^- \in \Arg\min_{\rho \in \supp(\Phi_k)} S_{\mu,s}(\rho; \Phi_k, n)$, and $w_k^-$ is the weight of $\rho_k^-$ in $\Phi_k$. Note that when $\gamma = w_k^-$, the support point $\rho_k$ replaces $\rho_k^-$.
Typically, the search domain for including new support points (here, the interval $[0, r_{\max}]$) is discretized. In \texttt{Optimal\_Sphere\_Mixture.m}, we start with a finite set of candidates for $\rho_k$. However, if two support points $\rho_i^-$ and $\rho_j^-$ of $\Phi_k$ are separated by less than some $\me$, we periodically consider replacing $\Phi_k$ with
$\Phi_k - w_i^- \delta_{\rho_i^-} - w_j^- \delta_{\rho_j^-} + (w_i^- + w_j^-) \delta_\rho$,
where $\rho$ is optimally chosen in the interval $[\rho_i^-, \rho_j^-]$ (and is included in the set of candidates for subsequent iterations). We also periodically remove all support points of $\Phi_k$ that receive negligible weights. We stop the algorithm when $g_{\mu,s}(\Phi_k;n)/\SJ_{\mu,s}(\Phi_k;n)<\me$ for a small $\me$; by \eqref{eq:radial-gap-bound}, this certifies that $\SE_{\mu,s}(\Phi_k;n)>1-\me$.

\subsection{Three examples}\label{S:NSC-examples}

\subsubsection{$\mu$ is uniform on the unit sphere $\SS_{d-1}(1)$}\label{S:NSC-examples-sphere}

Here $\mu$ is the spherical measure (uniform on $\SS_{d-1}(1)$), and the distribution of $\|U\|$ for $U\simd\mu$ is the delta measure at $r=1$. When $n=1$ and $\Xb_n=\{\0b_d\}$, the distance c.d.f.\ $F(t;\Xb_n,\mu)$ equals 0 for $t<1$ and 1 for $t\geq 1$. 

Using the algorithm described above, we obtain that, given $s$ and $d$, for very small values of $n>1$ the optimal distribution is a mixture of the delta measure at the origin and the uniform measure on $\SS_{d-1}(a)$ for some $a<1$. Figure~\ref{F:nmin_sphere} shows $n_*(d,s)$, the smallest values of $n$ such that the distribution $\PP_{a^*}$ uniform on a single sphere $\SS_{d-1}(a^*)$ is optimal, as a function of $d=2,\ldots,15$ for different $s$ (for $n=1$, the degenerate case $\PP=\delta_{\0b_d}$ is optimal). For all $n\geq n_*(d,s)$, the application of Corollary~\ref{Coro:one-sphere-radial-optimality} shows that the optimal distribution is concentrated (and uniform) on a single sphere $\SS_{d-1}(a^*)$: we simply need to plot $S_{\mu,s}(\rho;\delta_{a^*},n)$ for $\rho\in[0,1]$. This is to be contrasted with the situation met in Sections~\ref{S:NSC-examples-ball} and \ref{S:NSC-examples-normal}. In the special case $s=2$ and $d=3$, the optimality of $\PP_{a^*}$ for any $n\geq 3$ can be proved formally (as then $\delta=1$ and $I_v(1,1)=v$ for $v\in[0,1]$).

\begin{figure}[ht!]
\begin{center}
\includegraphics[width=.5\linewidth]{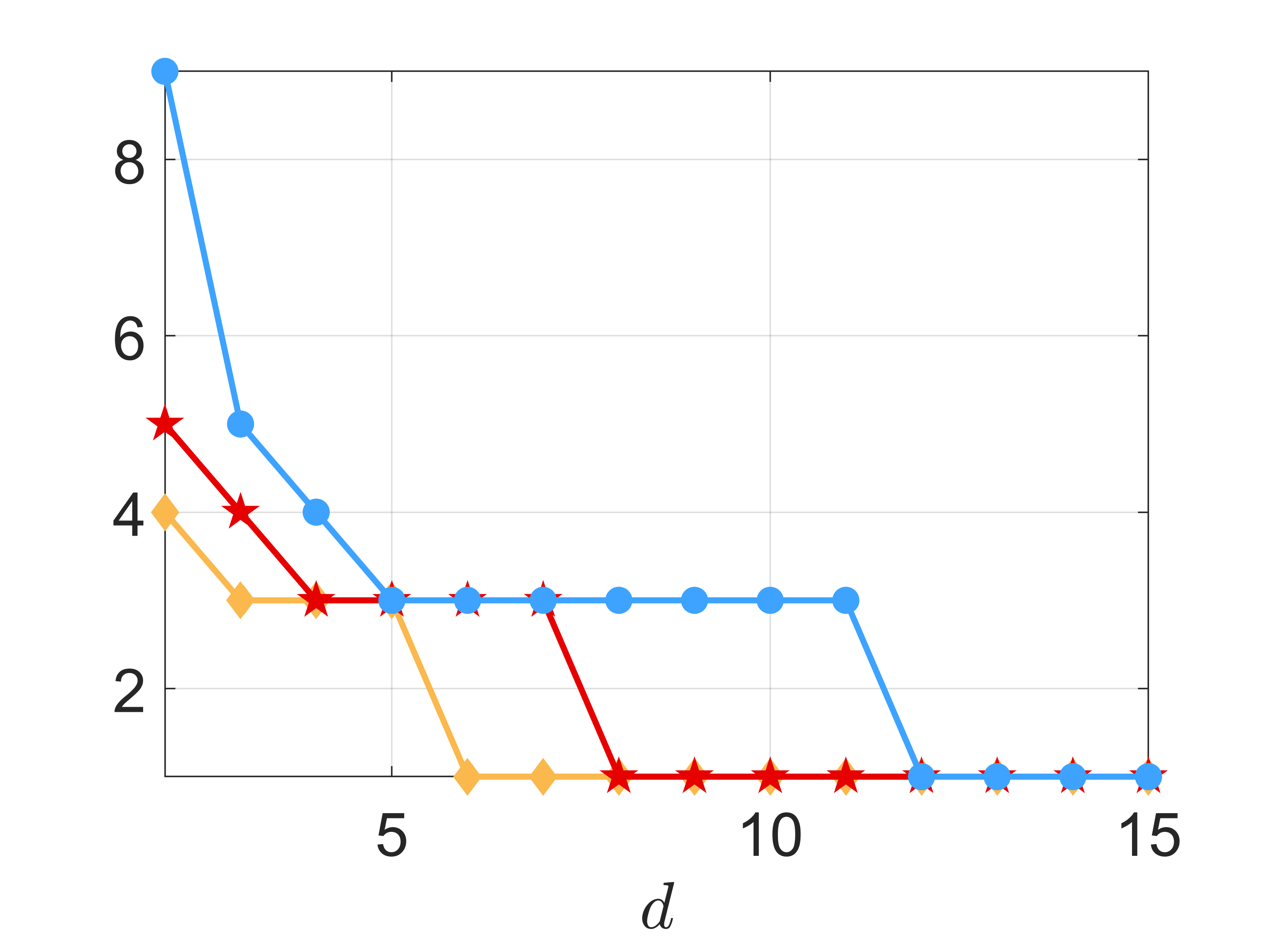}
\end{center}
\caption{\small Sphere. Smallest $n=n(d,s)$ such that $\PP$ uniform on $\SS_{d-1}(a^*)$ is optimal, for $s=2$ ({\color{Dandelion} $\blacklozenge$}), $s=4$ ({\color{red} $\bigstar$}) and $s=10$ ({\color{Cerulean} $\bullet$}).}
\label{F:nmin_sphere}
\end{figure}

The value $a^*=a^*(d,n,s)$ that minimises $D_{\mu,s}(\PP^{[n]}_a)$ with respect to $a$ can be obtained numerically with arbitrary precision for any given $d,n$ and $s$. The expression \eqref{PPaa} can be simplified as follows:
\bea 
\PP_a\left\{ \| X-\ub \| \leq t \right\} =
\left\{\begin{array}{ll}
0                      & \mbox{if } t\leq 1-a \,,\\
I_\upsilon(\delta,\delta) & \mbox{if } 1-a \leq t \leq 1+a \,, \\
1                      & \mbox{if } 1+a \leq t \,,
\end{array} \right.
\eea
where $\upsilon=[t^2 -(1-a)^2]/(4 a)$. Formula~\eqref{Ls-mean-final} then gives
\be\label{Ds-sphere}
D_{\mu,s}(\PP^{[n]}_a) =  (1-a)^s + s \int_{1-a}^{1+a} t^{s-1} \left[1-I_\upsilon(\delta,\delta)\right]^n\,\dd t \,.
\ee
In the special case $d=3$, $\delta=1$ and $I_v(1,1)=v$ for $v\in[0,1]$, so that $D_{\mu,s}(\PP^{[n]}_a)$ can be calculated explicitly for $s$ even. It is given by a polynomial in $a$ of degree $s$, with for example $D_{\mu,2}(\PP^{[n]}_a)=(1-a)^2+4\,a/(n+1)$ and $D_{\mu,4}(\PP^{[n]}_a)=(1-a)^4+8\,a[n(1-a)^2+2+2\,a^2]/[(n+1)(n+2)]$, which gives $a^*(3,n,2)=(n-1)/(n+1)$ and $a^*(3,n,4)$ as the root of a cubic equation, satisfying $a^*(3,n,4)=1-4/n+16/n^2+\SO(1/n^3)$ when $n\to\infty$. We thus have $a^*(3,3,2)=1/2$, and $\PP_{1/2}$ is globally optimal among all distributions on $\RR^3$, whereas $a^*(3,2,2)=1/3$ with $\PP_{1/3}$ being only optimal among all distributions concentrated on a sphere (as $n_*(3,2)=3$, see Figure~\ref{F:nmin_sphere}).

The left column of Figure~\ref{F:sphere} presents $a^*(d,n,s)$ as a function of $d$ for different $n$ and $s=2$ (top) and $s=10$ (bottom). For fixed $s$ and $d$, $a^*$ increases with $n$, and asymptotically, when $n$ tends to infinity, $a^*$ tends to 1 (see, e.g., \cite[Chap.~9]{GrafL2000}). Intuitively the choice $a=1$ seems natural for all $n$ and $s$. However, Figure~\ref{F:sphere} indicates that $a=1$ can be a poor choice, especially in high dimension. For any fixed $d$ and $n$, $a^*(d,n,s)$ is a decreasing function of $s$, with $a^*(d,n,\infty)=0$ if $n$ is small enough. Indeed, for $n \leq d$, $\CR(\Xb_n)\geq 1$ for any $\Xb_n$, and the inequality is strict if $\0b_d\notin\Xb_n$. To see this, choose a unit vector orthogonal to the linear span of the at most $d$ points in $\Xb_n$. This vector, viewed as a point of $\SS_{d-1}(1)$, is at distance at least one from every point of $\Xb_n$, and at distance strictly greater than one if none of those points is the origin. This implies that $a^*(d,n,s)=0$ for large enough $s$ when $n\leq d$. Also, for fixed $n$ and $s$, $a^*(d,n,s)$ decreases with $d$. The evolution of $D_{\mu,s}(\PP^{[n]}_{a^*})$ as a function of $d$ for different $n$ and $s=1,10$ is presented in the supplementary material.

The right column of Figure~\ref{F:sphere} presents the efficiency of the naive quantiser with $n$ points i.i.d.\ on $\SS_{d-1}(1)$ relative to optimised random quantisers with $n$ points on $\SS_{d-1}(a^*)$. It shows that the benefit of using an appropriate $a$ is significant when $d$ is large or $n$ is small.

\begin{figure}[ht!]
\begin{center}
\includegraphics[width=.49\linewidth]{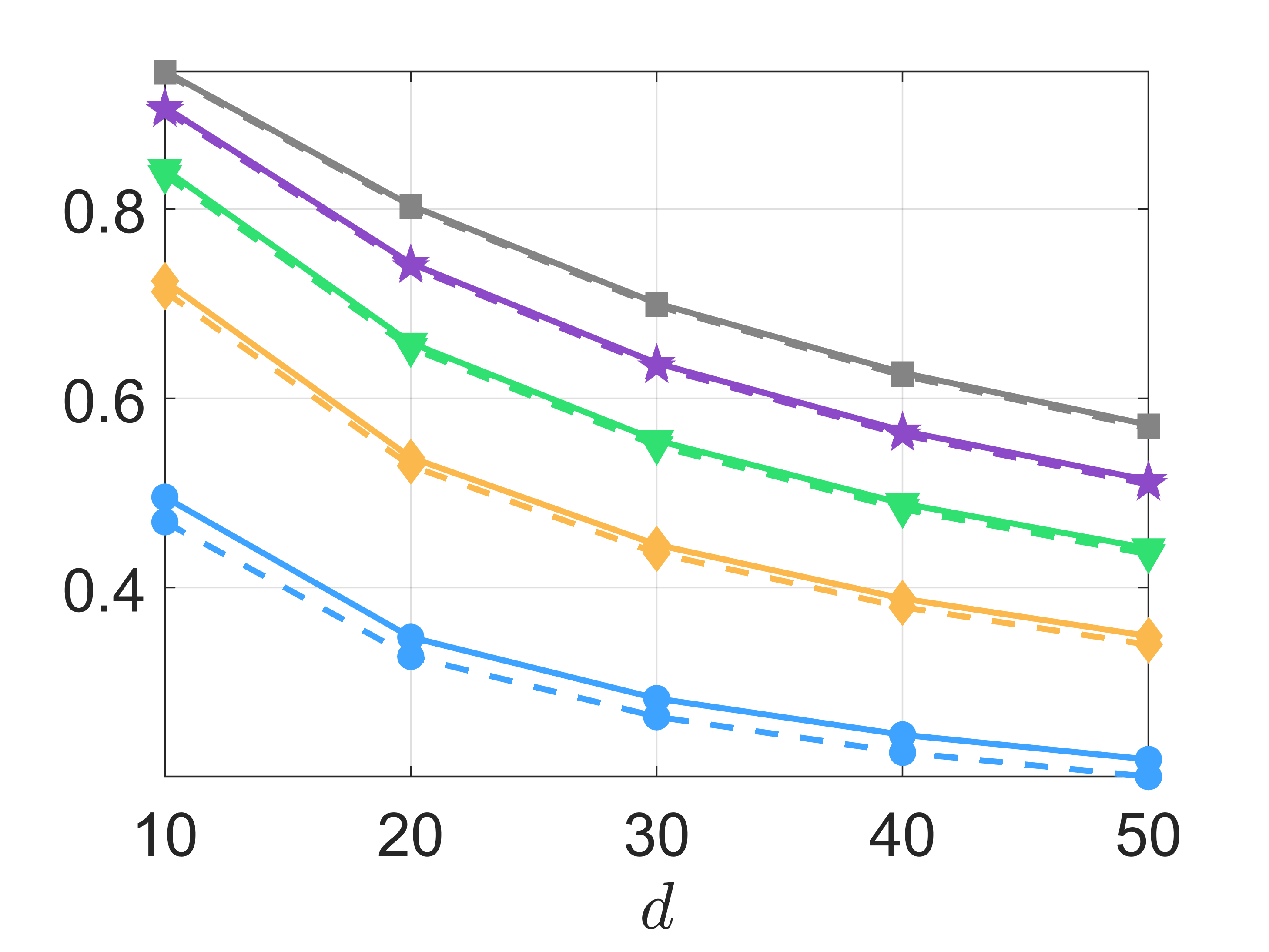}
\includegraphics[width=.49\linewidth]{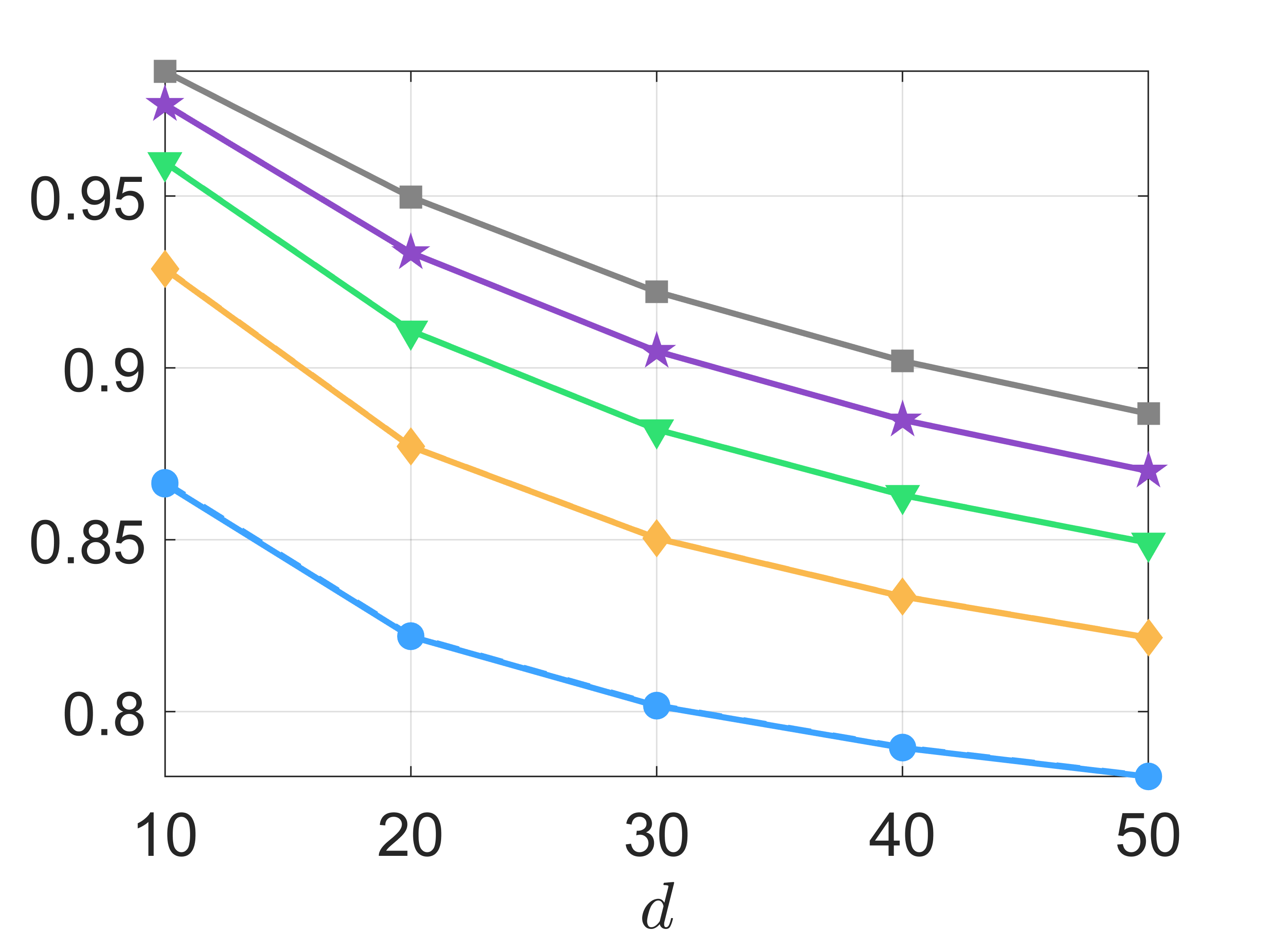} \\
\includegraphics[width=.49\linewidth]{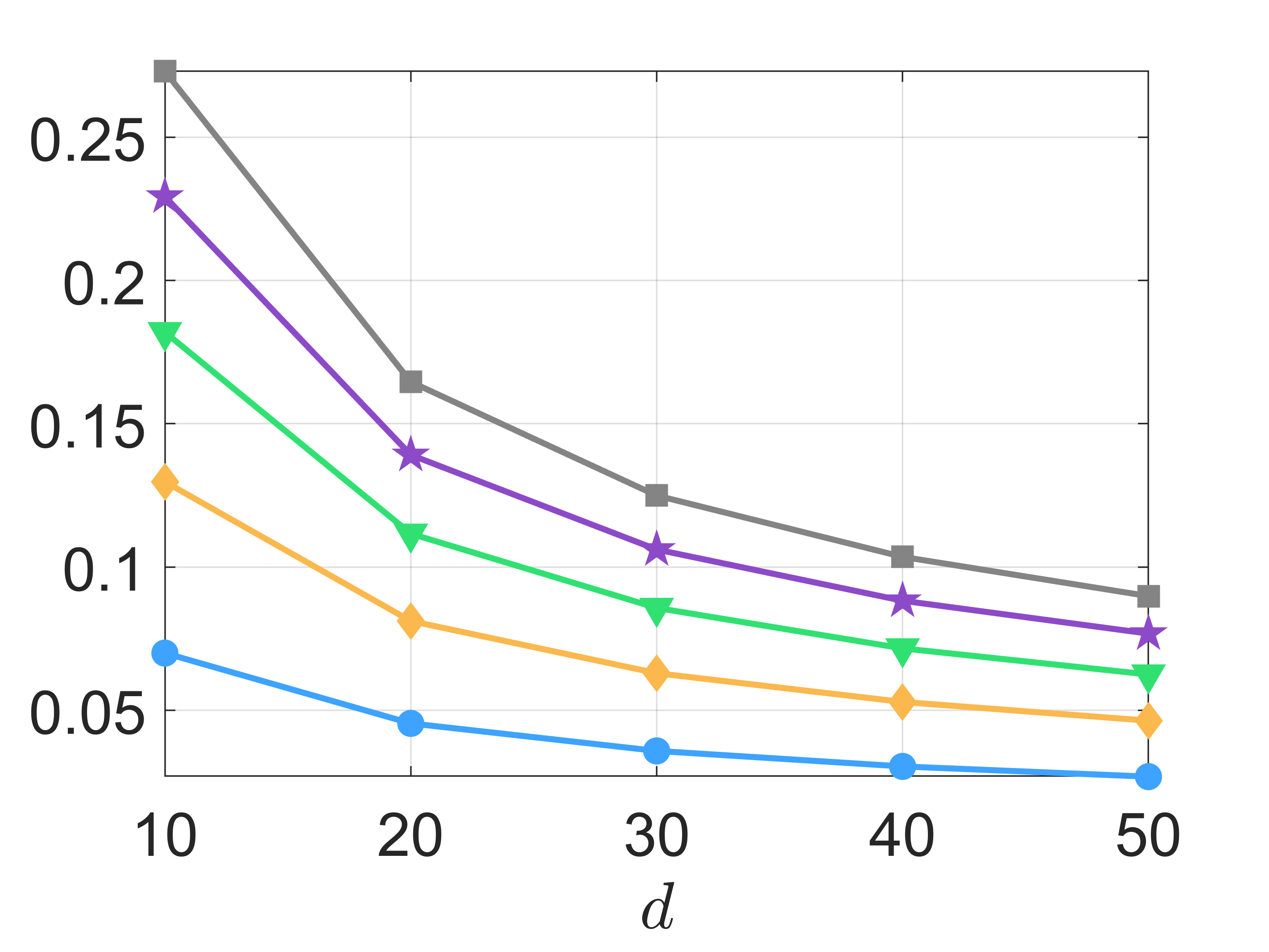}
\includegraphics[width=.49\linewidth]{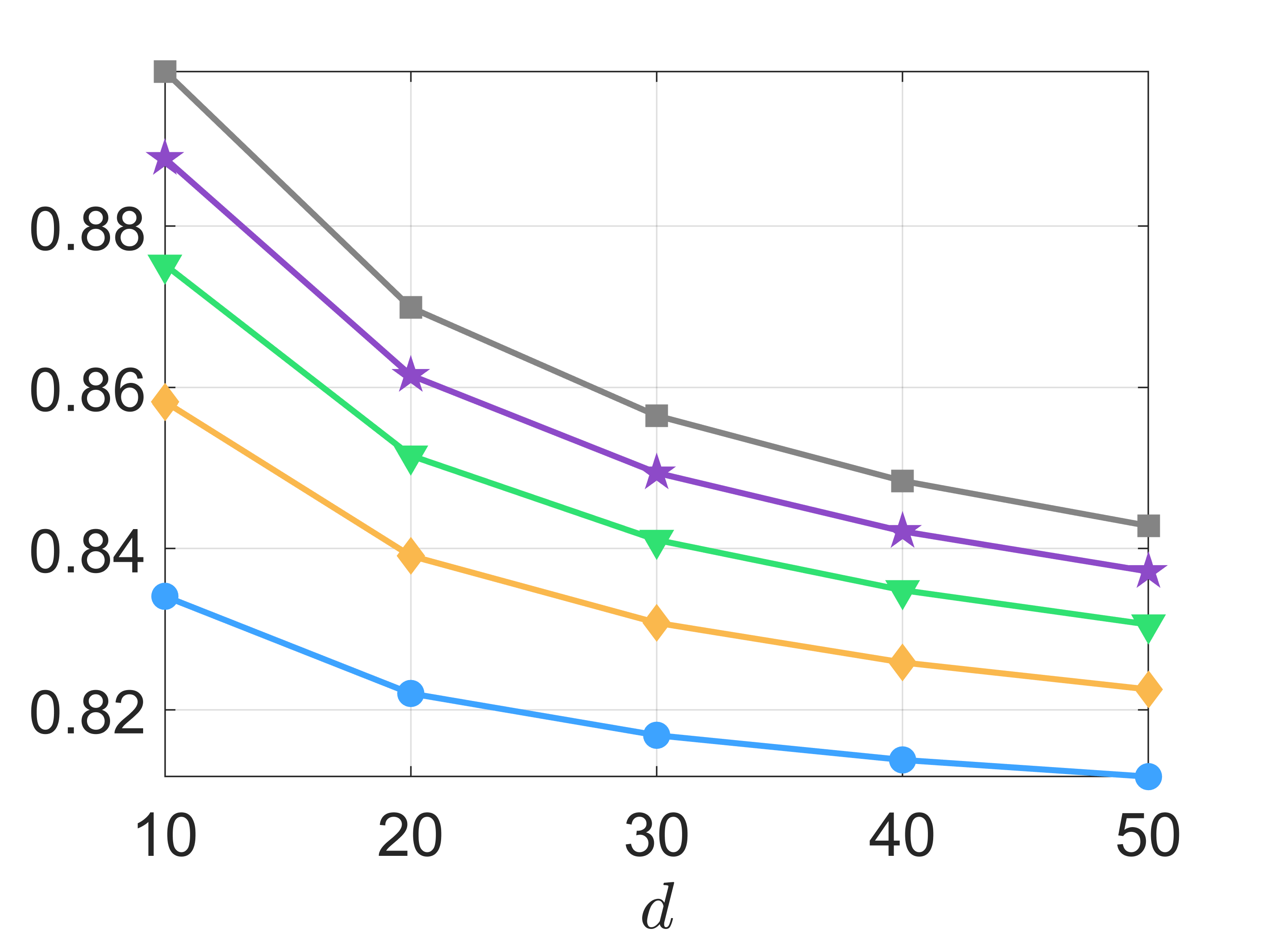}
\end{center}
\caption{\small Sphere. Value $a^*$ minimising $D_{\mu,s}(\PP^{[n]}_a)$ (left column) 
and ratio $D_{\mu,s}^{1/s}(\PP^{[n]}_{a^*})/D_{\mu,s}^{1/s}(\PP^{[n]}_1)$ (right column) as functions of $d$ for $s=2$ (top row) and $s=10$ (bottom row) and different values of $n$: $n=10$ ({\color{Cerulean} $\bullet$}), $n=10^2$ ({\color{Dandelion} $\blacklozenge$}), $n=10^3$ ({\color{green} $\blacktriangledown$}), $n=10^4$ ({\color{Orchid} $\bigstar$}), and $n=10^5$ ({\color{gray} {\tiny $\blacksquare$}}).
On the top-left panel, the dashed lines correspond to the extreme-value approximation \eqref{hat_a_2} of Corollary~\ref{Coro:astar-s2-Pa}.}
\label{F:sphere}
\end{figure}

\subsubsection{$\mu$ is uniform in the unit ball $\SB_d(1)$}\label{S:NSC-examples-ball}

For $\mu$ uniform in $\SB_d(1)$, the density of $r=\|U\|$ for $U\simd\mu$ is $\psi(\tau)=d\,\tau^{d-1}$. When $\PP=\PP_a$ uniform on $\SS_{d-1}(a)$, \eqref{Ls-mean-final} gives
\be\label{DS-Pa}
D_{\mu,s}(\PP_a^{[n]}) =  s \int_{t\geq 0} t^{s-1} \int_{r\geq 0}  H(r,t;\Phi_a,n) \,\psi(r)\,\dd r \,\dd t \,,
\ee
where $H(r,t;\Phi_a,n) =\bigg(1-\PP_a \left\{ \| X-\ub \| \leq t \right\} \bigg)^n$ with $\Phi_a(\cdot)$ the c.d.f.\ of the delta measure $\delta_a$ and $r=\|\ub\|$, so that $\PP_a\{ \| X-\ub \| \leq t \}$ is given by \eqref{PPaa}.
We also consider random quantisers whose $n$ points are i.i.d.\ with $\PP=\PP_{0,b}$ uniform in $\SB_d(b)$. The c.d.f.\ $\Phi(\cdot)$ of $R=\|X\|$ has then the density $\phi_{0,b}(\rho)=d\,\rho^{d-1}/b^d$ for $\rho\in[0,b]$.
Plots of $b^*$ minimising $D_{\mu,s}(\PP^{[n]}_{0,b})$ and of $D_{\mu,s}(\PP^{[n]}_{0,b^*})$ as functions of $d$ for different $n$ and $s=1,10$ are presented in the supplementary material.

For $n$ small enough, a distribution $\PP_{a^*}$ with $a^*=a^*(d,n,s)$, uniform on a single sphere, is optimal. However, Zador's theorem indicates that $\PP=\mu$ is asymptotically optimal. Consequently, distributions concentrated on a single sphere cease to be optimal for sufficiently large $n$. Below we show that this $n$ grows exponentially fast with $d$.

The algorithm described in Section~\ref{S:NSC-algorithm} produces optimal $\PP^*$ that are mixtures of uniform distributions on concentric spheres with radii $\rho_i$. The number of spheres increases (slowly) with $n$, and the weight $w_i$ allocated to each $\rho_i$ increases with $\rho_i$ (as expected, since $\PP^*$ should increasingly resemble $\mu$ as $n$ grows); see Figure~\ref{F:cdf-ball} for an illustration.

We first consider the case $s=2$ and $d=8$. The algorithm is stopped when
$\SE_{\mu,s}(\Phi_k;n) > 0.9999$, see \eqref{eq:radial-gap-bound}; we therefore identify the computed distribution with the optimum $\PP^*$ in the discussion below (for comparison, the expected-distortion efficiency for $\PP=\mu$ is only about 0.9296 for $n=1\,000$ and 0.9631 for $n=100\,000$).
Figure~\ref{F:sensitivity-ball} shows the sensitivity function $S_{\mu,s}(\rho;\Phi^*,n)$ as a function of $\rho\in[0,1]$ when $n=1\,000$ (two concentric spheres) and $n=100\,000$ (four spheres). The vertical dashed lines indicate the positions of the support points $\rho_i$ of $\Phi^*$ (for which $S_{\mu,s}(\rho_i;\Phi^*,n) = \overline{S}_{\mu,s}(\Phi^*,n)$; see Theorem~\ref{Th:radial-equivalence}). Figure~\ref{F:cdf-ball} presents the optimal c.d.f.\ $\Phi^*$ for both design sizes.

\begin{figure}[ht!]
\begin{center}
\includegraphics[width=.49\linewidth]{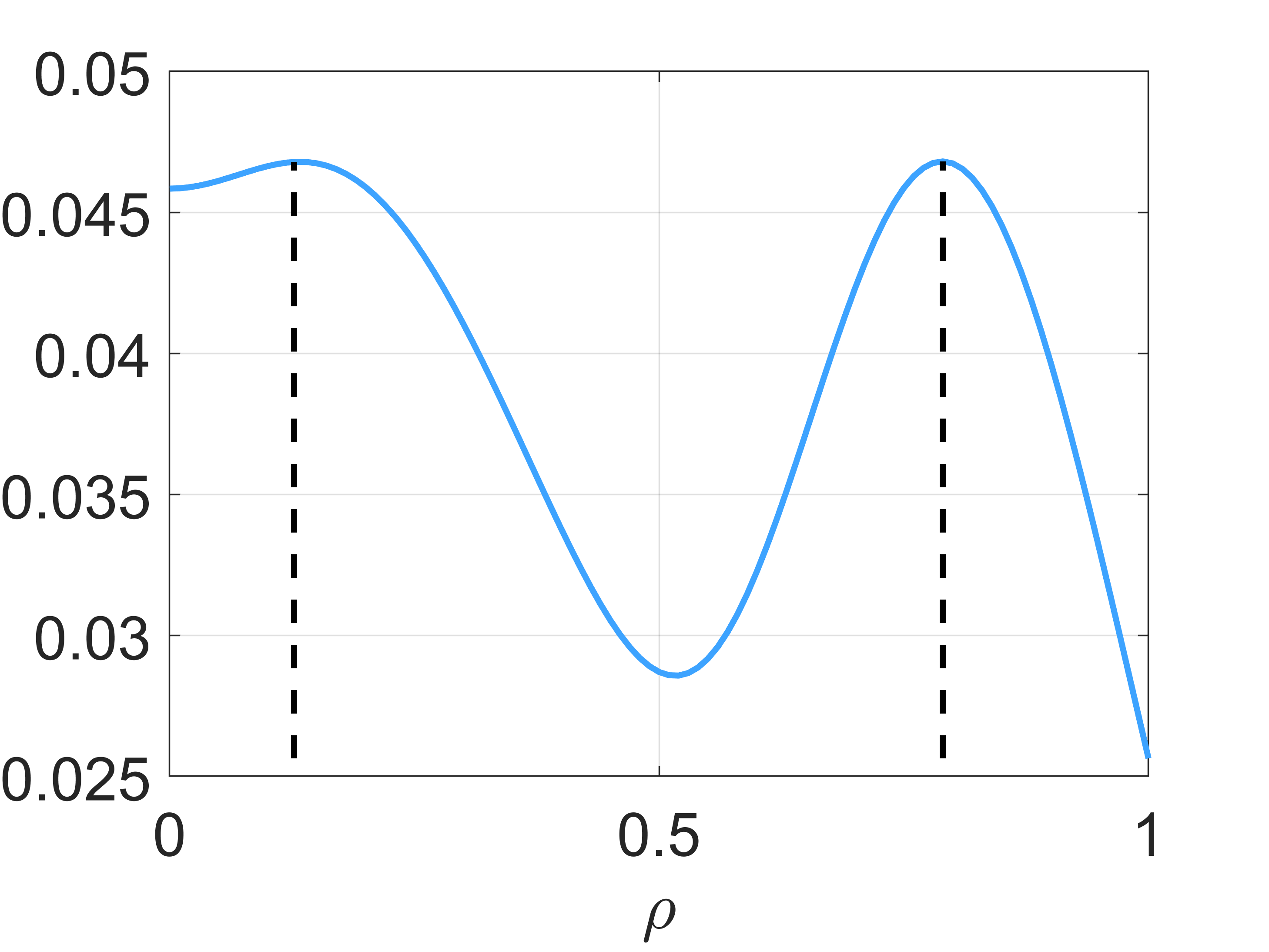}
\includegraphics[width=.49\linewidth]{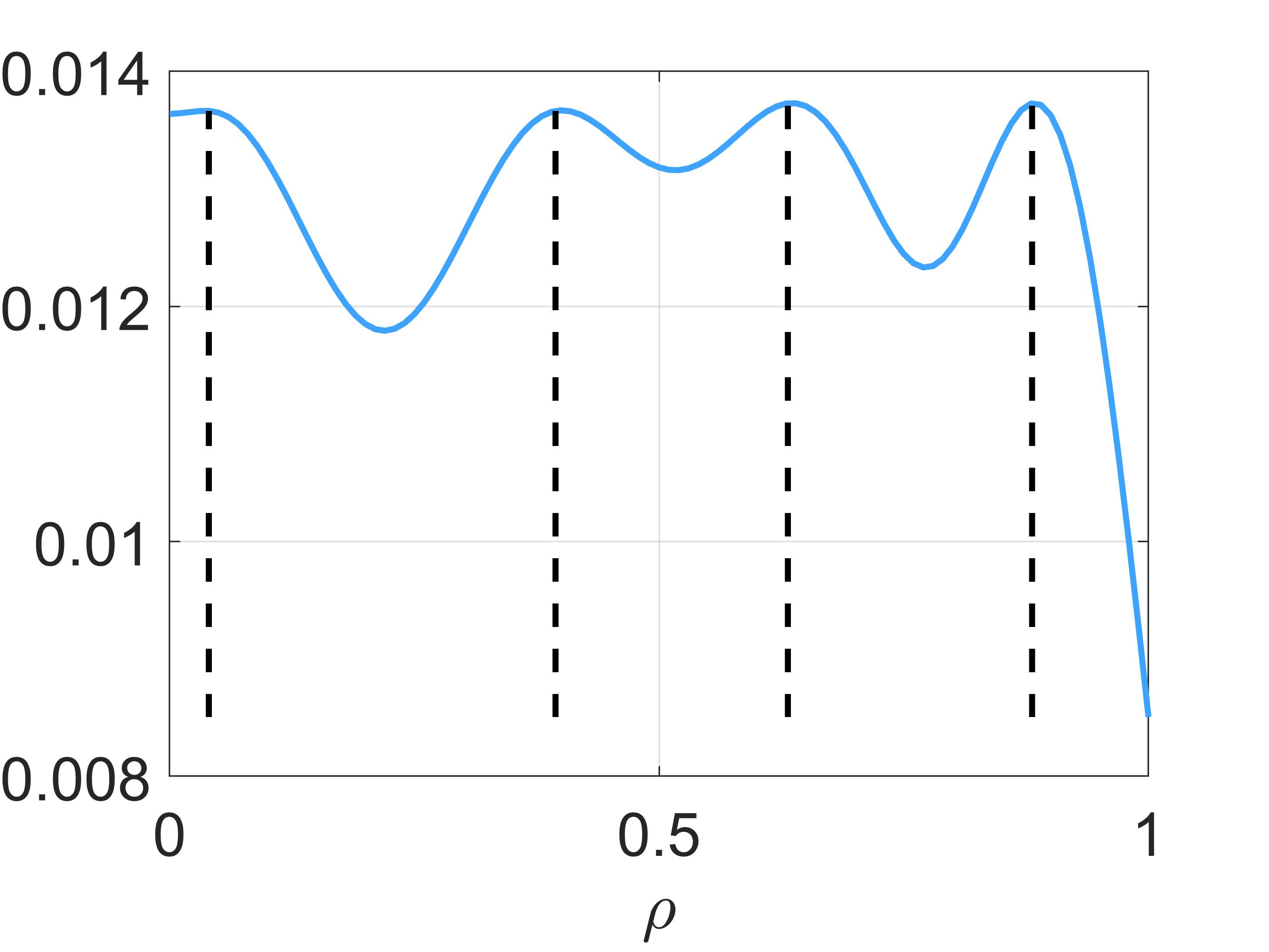}
\end{center}
\caption{\small Ball, $d=8$. Sensitivity function $S_{\mu,2}(\rho;\Phi^*,n)$ when $n=1\,000$ (left) and $n=100\,000$ (right).}
\label{F:sensitivity-ball}
\end{figure}

\begin{figure}[ht!]
\begin{center}
\includegraphics[width=.49\linewidth]{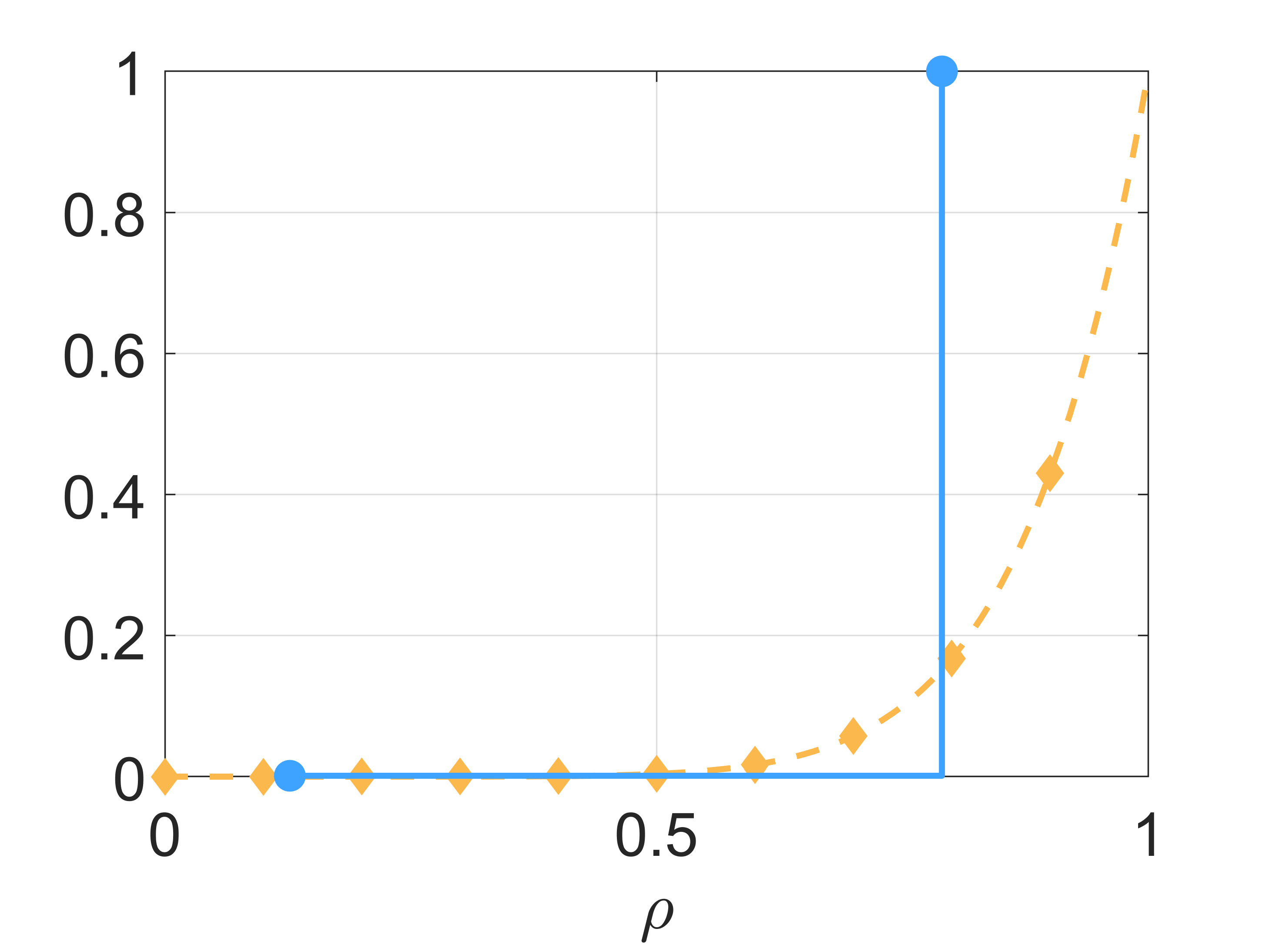}
\includegraphics[width=.49\linewidth]{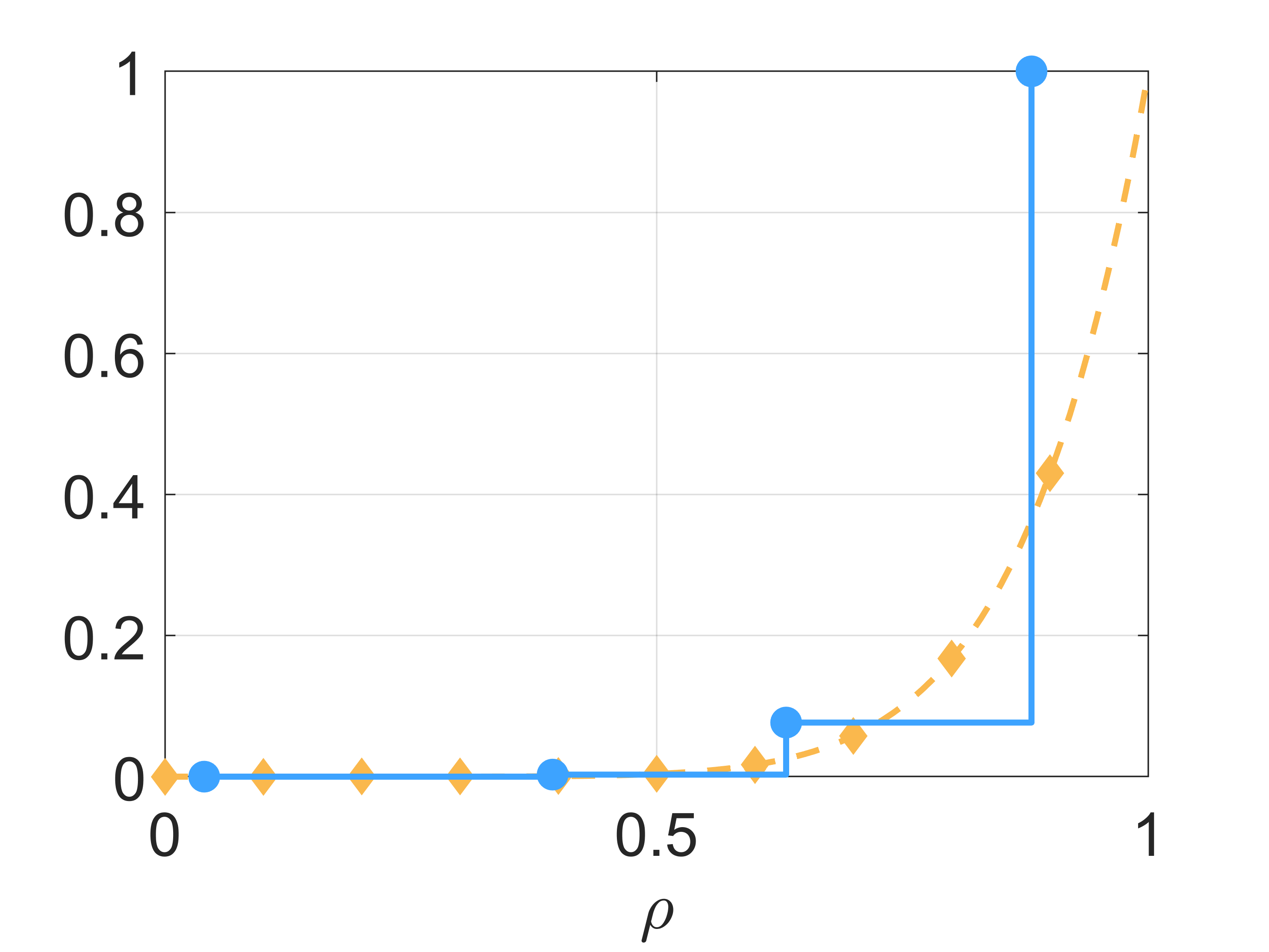}
\end{center}
\caption{\small Ball, $s=2$, $d=8$. $\Phi^*$ ({\color{Cerulean} {\tiny ---}$\bullet$}) and c.d.f.\ for $\PP=\mu$ ({\color{Dandelion} {\small - - -}$\blacklozenge$}) when $n=1\,000$ (left) and $n=100\,000$ (right). The locations of the support points of $\Phi^*$ are indicated by {\color{Cerulean} $\bullet$}.}
\label{F:cdf-ball}
\end{figure}

Below we address the following questions:
\begin{itemize}
    \item for which $n=n_1(d,s)$ does $\PP_{a^*}$, the uniform distribution on a sphere with optimal radius $a^*(d,n,s)$, cease being optimal?
    \item for which $n=n_2(d,s)$ does $\PP_{a^*}$ cease to perform better than the uniform distribution $\PP_{0,b^*}$ on a ball with optimised radius $b^*(d,n,s)$?
    \item for which $n=n_3(d,s)$ does $\PP_{a^*}$ cease to perform better than the uniform distribution $\PP_{0,1}=\mu$ (which is asymptotically optimal according to Zador's theorem)?
\end{itemize}

The left panel of Figure~\ref{F:asymptotics-ball_s2} presents the evolution of $n_1(d,s)$, $n_2(d,s)$ and $n_3(d,s)$ (on a logarithmic scale, obtained by binary search) as functions of $d$ for $s=2$ (see the supplementary material for plots of $n_2(d,s)$ for other values of $s$). The behaviour is qualitatively similar for other values of $s$, with a slightly slower increase with $d$ as $s$ grows. Notably, the plot of $n_1(d,2)$ almost perfectly coincides with the dashed line representing $2^{d+1}$. For moderate design sizes, the optimal distribution is uniform on a single sphere. This distribution remains preferable to a uniform distribution on a ball, and hence to $\mu$ itself, until $n$ reaches astronomically large values when $d$ is large.

\begin{figure}[ht!]
\begin{center}
\includegraphics[width=.49\linewidth]{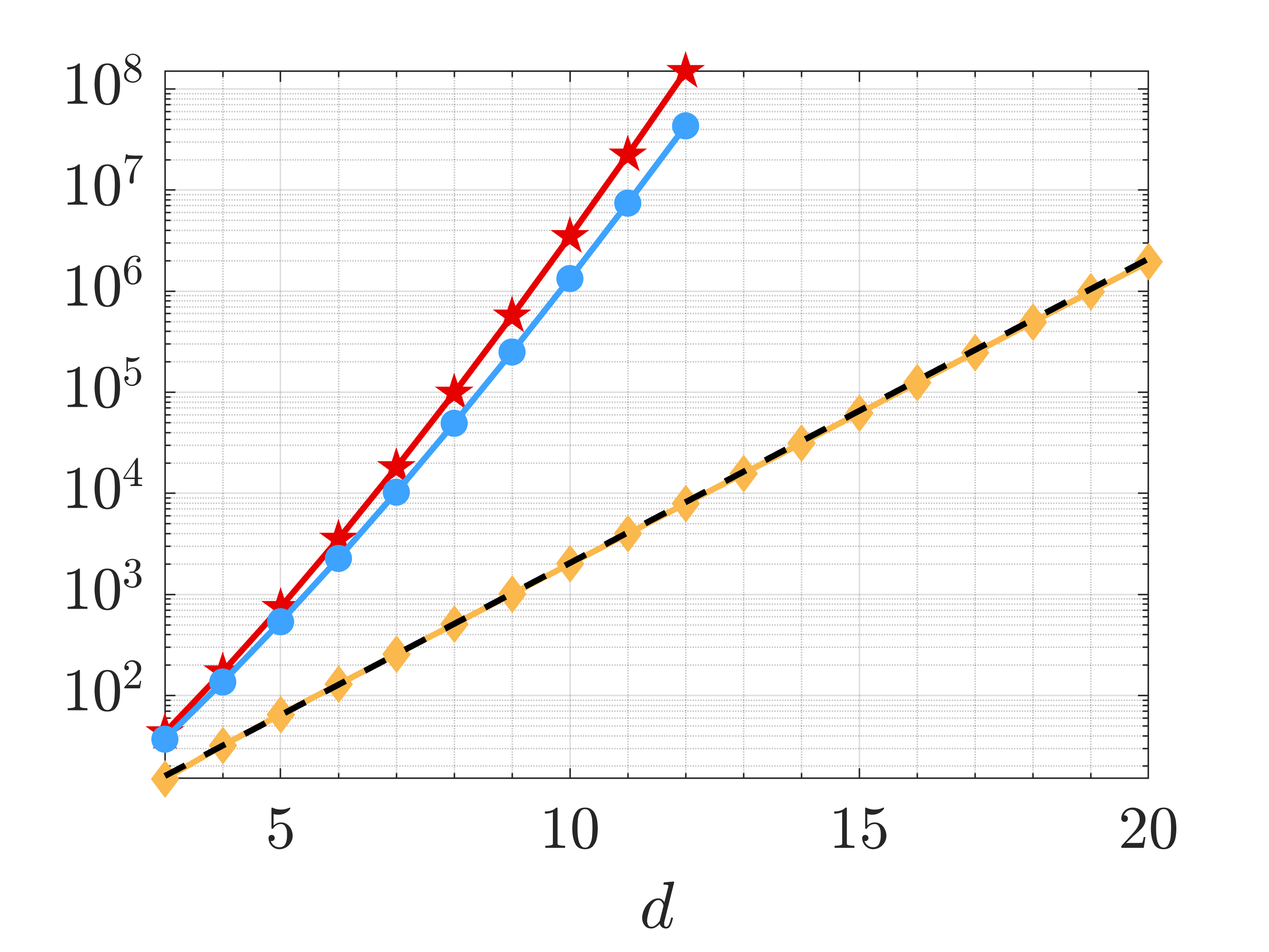}
\includegraphics[width=.49\linewidth]{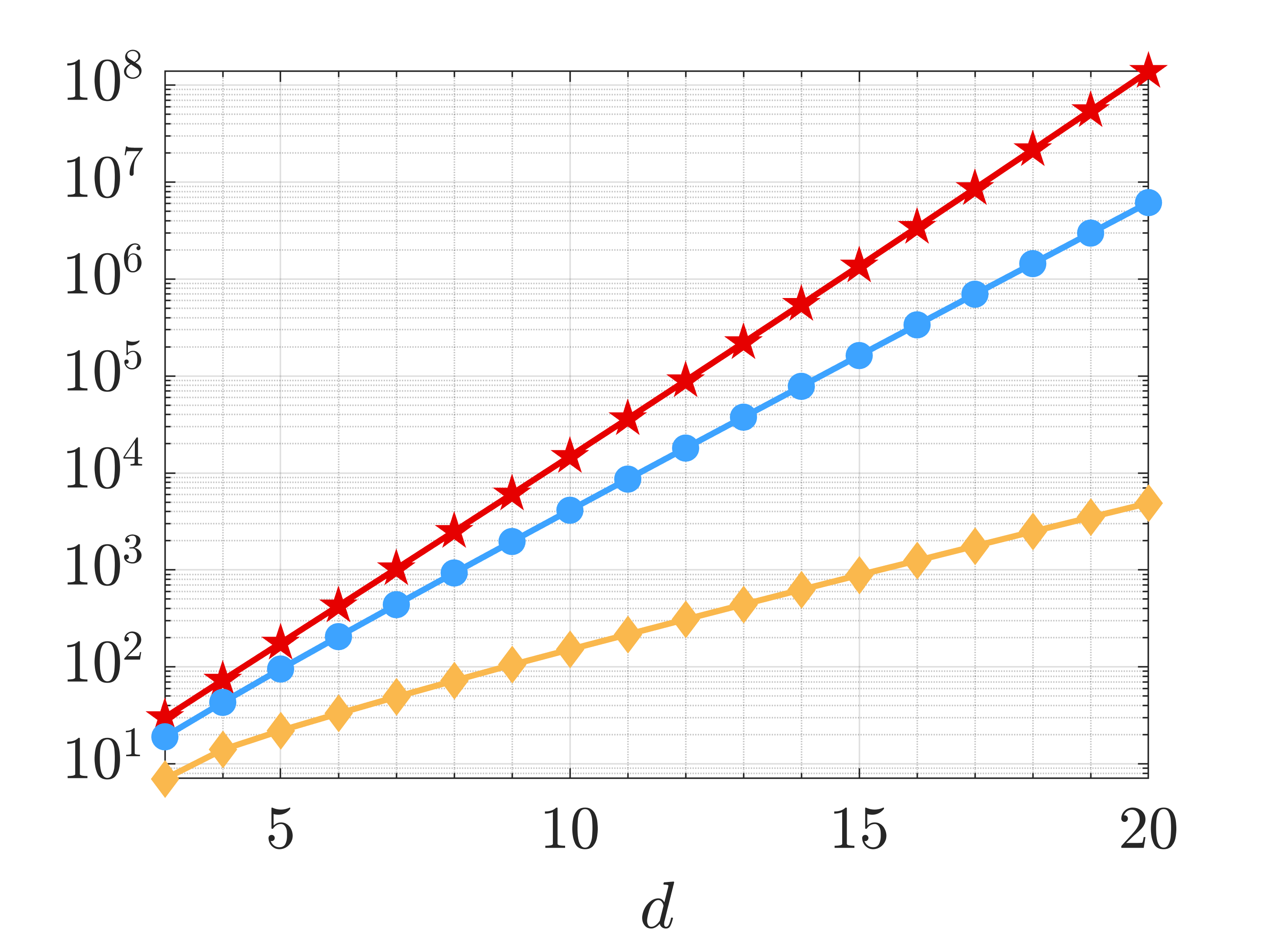}
\end{center}
\caption{\small $n_1(d,2)$ ({\color{Dandelion} $\blacklozenge$}), $n_2(d,2)$ ({\color{Cerulean} $\bullet$}) and $n_3(d,2)$ ({\color{red} $\bigstar$}) as functions of $d$, for $\mu$ uniform on $\SB_d(1)$ (left) and $\mu$ the normal distribution $\SN(\0b_d,\Ib_d/d)$ (right). }
\label{F:asymptotics-ball_s2}
\end{figure}

\subsubsection{$\mu$ is a spherically symmetric normal distribution}\label{S:NSC-examples-normal}

Any spherically symmetric normal distribution in $\RR^d$ can be renormalised as $\SN(\0b_d,\Ib_d/d)$. In this section, we assume that $\mu$ is the corresponding probability measure. When $U\simd\mu$, $d\,\|U\|^2$ has the chi-square distribution with $d$ degrees of freedom, so that the density of $\|U\|$ is
\be\label{psi-normal}
\psi(r)=\frac{d^{d/2}}{2^{d/2-1}\,\Gamma(d/2)}\,r^{d-1}\,\e1^{-d r^2/2} \,, \ r \geq 0\,.
\ee
The distribution of $r$ is more and more concentrated around 1 as $d$ increases. Its $k$-th moment is
\bea
M_{\psi,k} = \Ex_\mu\{\|U\|^k\}=(2^{k/2}/d^{k/2}) \Gamma((k+d)/2)/\Gamma(d/2)\,, \ k=1,2,\ldots
\eea
so that its mean is smaller than 1 for any $d$ and its variance is $1/(2\,d)-1/(8\, d^2)+ \SO(1/d^3)$, $d\to\infty$.

When $\PP=\PP_a$ uniform on $\SS_{d-1}(a)$, the expected $(\mu,s)$-distortion $D_{\mu,s}(\PP_a^{[n]})$ is given by \eqref{DS-Pa},
with $\psi(\cdot)$ given by \eqref{psi-normal}.
When $\PP=\mu_\ms$ is the probability measure of $\SN(\0b_d,\ms^2\Ib_d/d)$, $R=\|X\|$ has the p.d.f.\ $\phi_\ms(\rho)=(1/\ms)\psi(\rho/\ms)$, where $\psi(\cdot)$ is given by \eqref{psi-normal}, and the expected $(\mu,s)$-distortion $D_{\mu,s}(\mu^{[n]}_\ms)$ is given by \eqref{Ls-mean-final}
where
\bea
H(r,t;\Phi,n) =\bigg(1-\frac{1}{\ms}\,\int_{\rho\geq 0} I_\upsilon(\delta,\delta) \,\psi(\rho/\ms)\,\dd\rho \bigg)^n \,,
\eea
with $\upsilon=\upsilon(t,\rho,r)$ given by \eqref{v}.

We denote by $\ms^* = \ms^*(d,n,s)$ the value of $\ms$ that minimises the distortion $D_{\mu,s}(\mu^{[n]}_\ms)$ and by $a^* = a^*(d,n,s)$ the value of $a$ that minimises $D_{\mu,s}(\PP_a^{[n]})$, as in previous sections.
The empirical distribution of an optimal quantiser $\Xb_n^*$ minimising $E_{\mu,s}(\Xb_n)$ weakly converges to $\SN(\0b_d,(\ms_\infty^*)^2\,\Ib_d/d)$ as $n\to\infty$, with $\ms_\infty^*=\sqrt{1+s/d}$; see \cite[Table~7.1]{GrafL2000}.

We take $d=10$ and $s=5$. The left panel of Figure~\ref{F:sensitivity-normal} presents the sensitivity function $S_{\mu,s}(\rho;\Phi^*,n)$ as a function of $\rho\in[0,2]$ when $n=1\,000$, showing that the optimal distribution is a mixture of two uniform distributions on concentric spheres; their radii are indicated by vertical dashed lines. The right panel shows the c.d.f.\ obtained when $\SE_{\mu,s}(\Phi_k;n) > 0.9999$ in the algorithm of Section~\ref{S:NSC-algorithm}; see \eqref{eq:radial-gap-bound}.
In the same situation, the expected-distortion efficiency for $\PP=\mu_{\ms_\infty^*}$ is only about 0.7828.

\begin{figure}[ht!]
\begin{center}
\includegraphics[width=.49\linewidth]{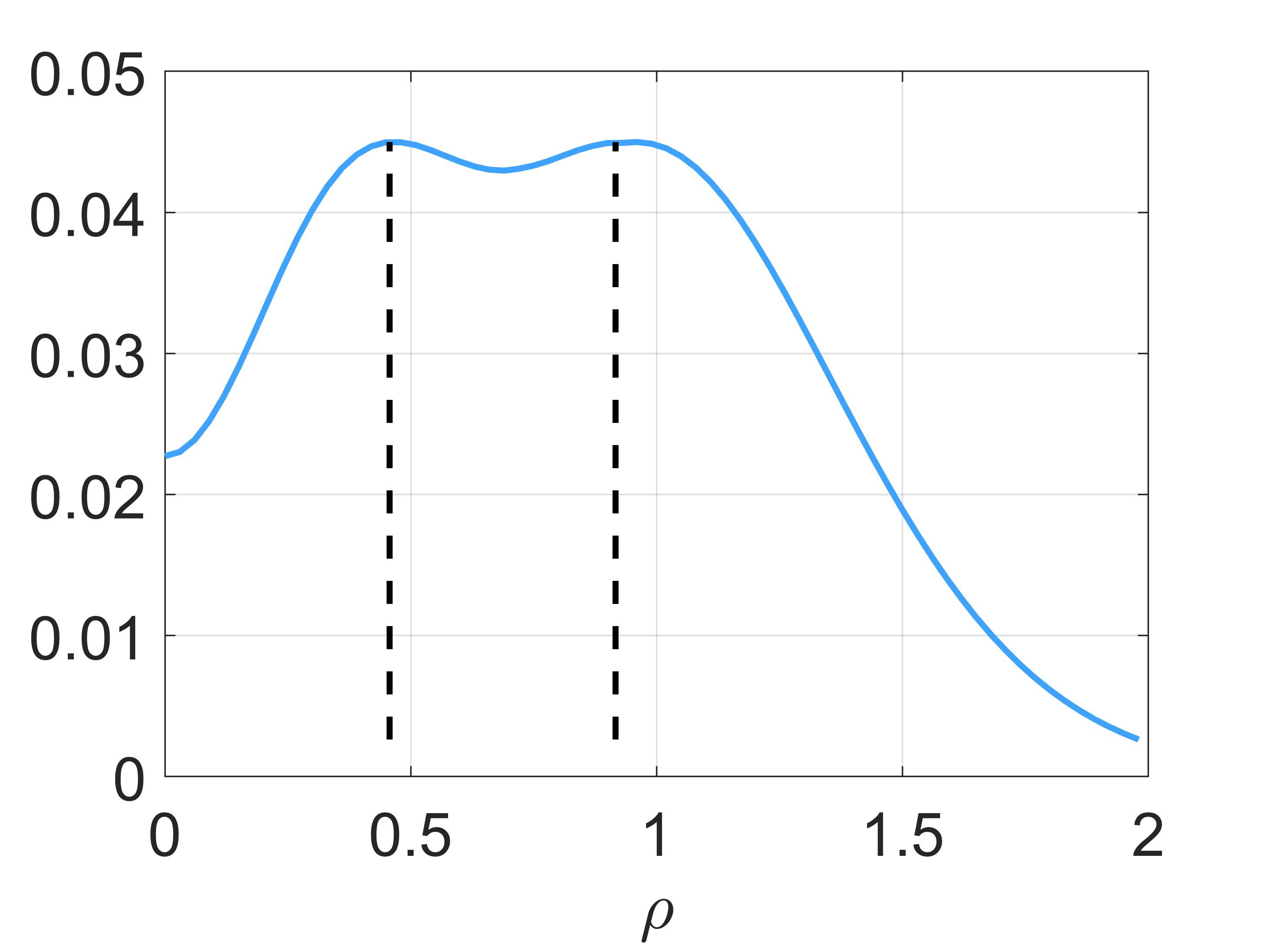}
\includegraphics[width=.49\linewidth]{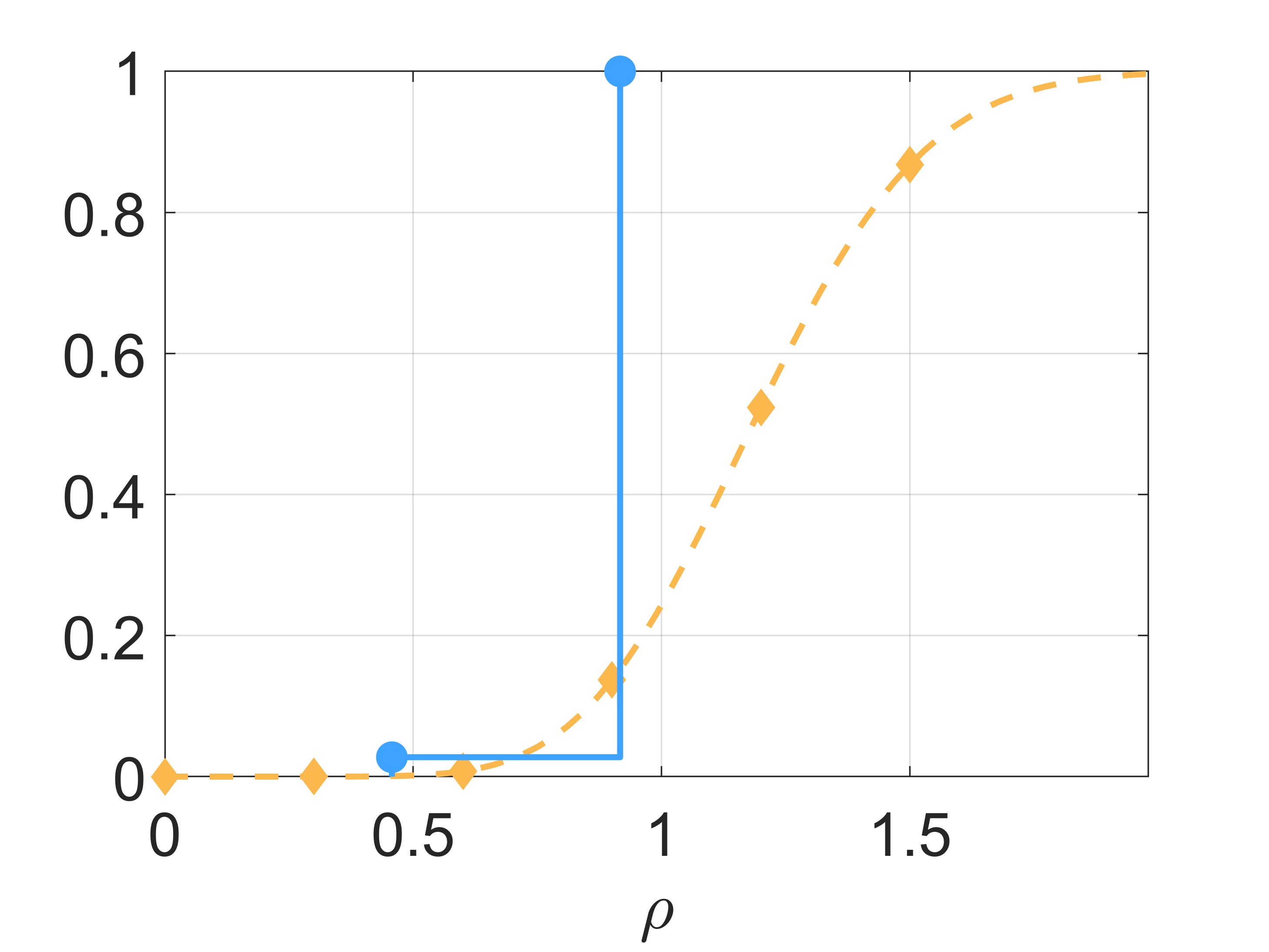}
\end{center}
\caption{\small Normal distribution, $s=5$, $d=10$, $n=1\,000$. Left: Sensitivity function $S_{\mu,5}(\rho;\Phi^*,n)$. Right: $\Phi^*$ ({\color{Cerulean} {\tiny ---}$\bullet$}) and c.d.f.\ for $\PP=\mu_{\ms_\infty^*}$ ({\color{Dandelion} {\small - - -}$\blacklozenge$}); the locations of the support points of $\Phi^*$ are indicated by {\color{Cerulean} $\bullet$}.}
\label{F:sensitivity-normal}
\end{figure}

The left panel of Figure~\ref{F:normal} illustrates that $\ms^*=\ms^*(d,n,s)$ increases with $s$, similarly to $\ms_\infty^*$ (shown as a dashed line), and that $a^*=a^*(d,n,s)$ also increases with $s$, unlike in Section~\ref{S:NSC-examples-sphere}. The reason is that large values of $r=\|U\|$ receive increasing weight as $s$ increases, while $r=1$ in Section~\ref{S:NSC-examples-sphere}. The right panel shows that $\ms^*-a^*$ tends to zero as $d$ increases, a consequence of $\mu_\ms$ tending to $\PP_\ms$ as $d\to\infty$.

\begin{figure}[ht!]
\begin{center}
\includegraphics[width=.49\linewidth]{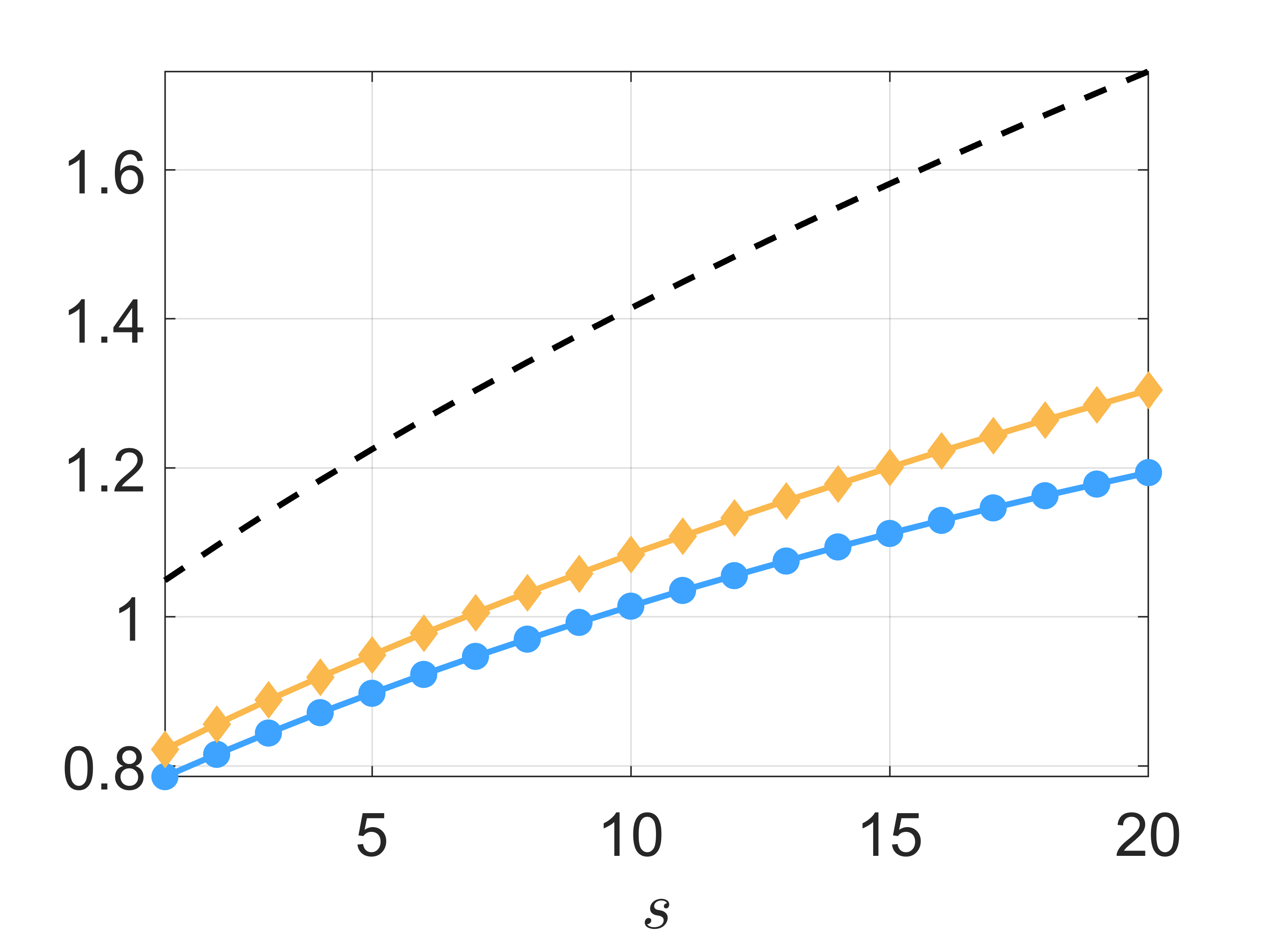}
\includegraphics[width=.49\linewidth]{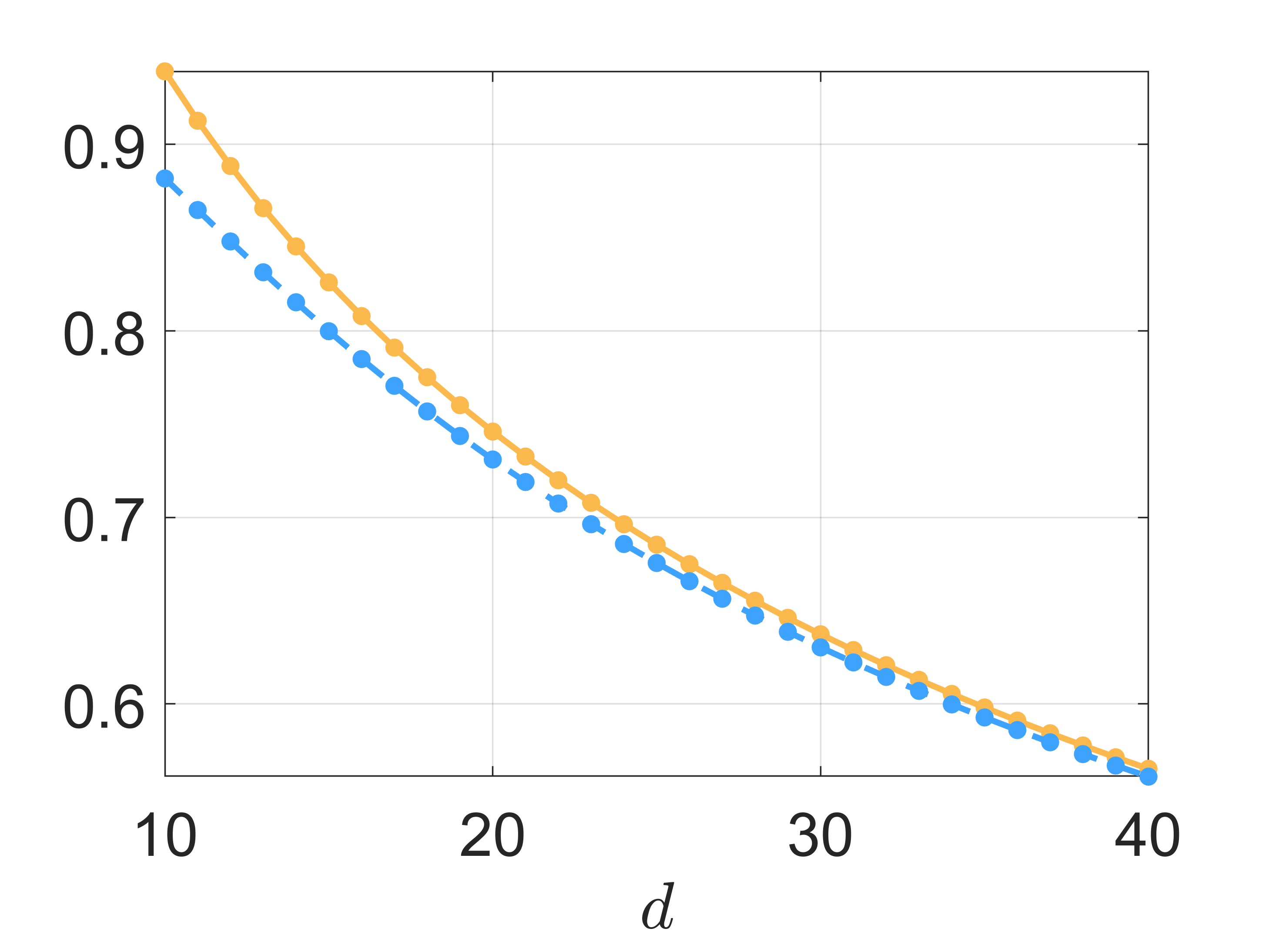}
\end{center}
\caption{\small Optimal values $a^*(d,n,s)$ ({\color{Cerulean} $\bullet$}) and $\ms^*(d,n,s)$ ({\color{Dandelion} $\blacklozenge$}) as functions of $s$ for $d=10$ and $n=1\,000$ (left panel, with $\ms_\infty^*$ shown as a dashed line), and as functions of $d$ for $s=2$ and $n=10\,000$ (right panel).}
\label{F:normal}
\end{figure}

Finally, similarly to Section~\ref{S:NSC-examples-ball}, we compute the values of $n_1(d,s)$, $n_2(d,s)$ and $n_3(d,s)$ at which the following transitions occur, respectively:
$\PP_{a^*}$, the uniform distribution on a sphere with optimal radius $a^*(d,n,s)$, ceases to be optimal; $\PP_{a^*}$ ceases to outperform an optimised normal distribution $\mu_{\ms^*}$; $\PP_{a^*}$ ceases to outperform the asymptotically optimal distribution $\mu_{\ms_\infty^*}$. Their values for $s=2$ are plotted as functions of $d$ on the right panel of Figure~\ref{F:asymptotics-ball_s2} (see the supplementary material for other values of $s$). While the rate of increase with $d$ is smaller than for $\mu$ uniform in $\SB_d(1)$ (left panel), the qualitative behaviour remains similar.

\section{Extreme-value approximations}\label{S:asymptotic}

In this section, extreme-value theory is used to derive approximations of the $(\mu,s)$-distortion $D_{\mu,s}(\PP_a^{[n]})$, where $\PP_a$ is uniform on $\SS_{d-1}(a)$ (Section~\ref{S:Rn-uniform-on-SS}).
In Section~\ref{S:n-growing-with-d}, three asymptotic regimes are identified that govern the limiting behaviour of $D_{\mu,s}(\PP_a^{[n]})$ when $n$ increases with $d$.
This is illustrated in Section~\ref{S:extreme-value-sphere} for the case where $\mu$ is uniform on $\SS_{d-1}(1)$. Further examples, including the case where $\mu$ is spherically normal, are provided in the supplementary material.

\subsection{Uniform quantisers on a sphere}\label{S:Rn-uniform-on-SS}

Consider random quantisers $\Rb_n$ with distribution $\Px_n=\PP_a^{[n]}$ where $\PP_a$ is uniform on $\SS_{d-1}(a)$. For large $n$, the distribution of $\zeta(n,d)$ in \eqref{d2URn} can be approximated using standard results from extreme-value theory. Here we assume $d\geq 3$ to avoid singularity in beta distributions.

\begin{lemma}\label{L:main-extreme-value2}
Assume that $d\geq 3$ and $\Px_n=\PP_a^{[n]}$, with $a>0$. For any fixed $\ub\in\RR^d\setminus\{\0b_d\}$, we have
\be\label{d2URnb}
\frac{1}{4\,a\,\|\ub\|\,\kappa_{n,d}} \left[d^2(\ub,\Rb_n)-(\|\ub\|-a)^2\right] \rad \xi_d \,, \ n\to\infty\,,
\ee
where $\xi_d$ is a random variable with Weibull c.d.f.\ $F_d(t)=1-\exp(-t^\delta)$, $t\geq 0$, and moments $\Ex\{\xi_d^k\}=\Gamma(1+k/\delta)$ with $\delta=(d-1)/2$. Here $\kappa_{n,d}$ is the $1/n$ quantile of $\beta_{\delta,\delta}$, defined by $I_{\kappa_{n,d}}(\delta,\delta)=1/n$.
\end{lemma}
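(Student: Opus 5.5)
By \eqref{d2URn} in Proposition~\ref{th:dist}, the left-hand side of \eqref{d2URnb} has the same distribution as $\zeta(n,d)/\kappa_{n,d}$, where $\zeta(n,d)=\min_{i\leq n}\zeta_i$ and the $\zeta_i\eqd\beta_{\delta,\delta}$ are i.i.d. This holds because $a>0$ and $\|\ub\|>0$, so the normalisation is legitimate. The statement therefore reduces to a classical extreme-value result for the minimum of i.i.d.\ variables whose c.d.f.\ behaves like a power near the left endpoint $0$. The plan is to prove this limit directly from the explicit c.d.f.\ of the minimum, without appealing to the general Gnedenko theory.

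For fixed $t\geq0$ we have
\[
\Pr\{\zeta(n,d)\leq t\,\kappa_{n,d}\}=1-\bigl[1-I_{t\kappa_{n,d}}(\delta,\delta)\bigr]^n .
\]
Three steps follow.

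\emph{Step 1.} Show that $\kappa_{n,d}\to0$ as $n\to\infty$. This is immediate, since $I_z(\delta,\delta)$ is continuous and strictly increasing on $[0,1]$ with $I_0=0$, and $I_{\kappa_{n,d}}(\delta,\delta)=1/n\to0$.

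\emph{Step 2.} Use the small-$z$ behaviour of the incomplete Beta function. Writing $\varphi_{\delta,\delta}(y)=y^{\delta-1}(1-y)^{\delta-1}/B(\delta,\delta)$, integration gives
\[
I_z(\delta,\delta)=\frac{z^\delta}{\delta B(\delta,\delta)}\,(1+o(1)),\qquad z\to0,
\]
because $(1-y)^{\delta-1}\to1$ uniformly on $[0,z]$. This is where $d\geq3$, i.e.\ $\delta\geq1$, makes the argument clean, since the density factor is bounded near $0$; the estimate in fact holds for any $\delta>0$. It follows that $I_z(\delta,\delta)$ is regularly varying at $0$ with index $\delta$. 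Consequently, for fixed $t>0$,
\[
\frac{I_{t\kappa_{n,d}}(\delta,\delta)}{I_{\kappa_{n,d}}(\delta,\delta)}\to t^\delta,
\qquad\text{so}\qquad
n\,I_{t\kappa_{n,d}}(\delta,\delta)\to t^\delta .
\]

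\emph{Step 3.} Conclude with the elementary limit $(1-x_n/n)^n\to\e1^{-x}$ whenever $x_n\to x$. This gives $\Pr\{\zeta(n,d)/\kappa_{n,d}\leq t\}\to1-\exp(-t^\delta)=F_d(t)$ for every $t\geq0$ (the case $t=0$ is trivial). Pointwise convergence of c.d.f.s at all continuity points of the continuous limit $F_d$ is exactly the convergence in distribution claimed in \eqref{d2URnb}. The moment formula is then a direct computation: substituting $u=t^\delta$,
\[
\Ex\{\xi_d^k\}=\int_0^\infty k\,t^{k-1}\e1^{-t^\delta}\,\dd t=\int_0^\infty u^{k/\delta}\e1^{-u}\,\dd u=\Gamma(1+k/\delta).
\]

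The only step needing care is Step 2. The ratio limit must be handled with the factor $t$ fixed while $\kappa_{n,d}\to0$, which the uniform $(1+o(1))$ bound on $[0,t\kappa_{n,d}]$ handles directly. No further obstacle is expected. Convergence of moments is not claimed in the lemma, so no uniform integrability argument is needed.
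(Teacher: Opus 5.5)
Your proposal is correct and follows the paper's route: you reduce via \eqref{d2URn} to the normalised minimum $\zeta(n,d)/\kappa_{n,d}$ and apply the Weibull extreme-value limit, which the paper simply cites from standard extreme-value theory while you verify it directly from $I_z(\delta,\delta)\sim z^\delta/[\delta B(\delta,\delta)]$ as $z\to0$ together with $(1-x_n/n)^n\to\mathrm{e}^{-x}$. One cosmetic slip: substituting $u=t^\delta$ in $\int_0^\infty k\,t^{k-1}\mathrm{e}^{-t^\delta}\,\dd t$ actually gives $(k/\delta)\int_0^\infty u^{k/\delta-1}\mathrm{e}^{-u}\,\dd u$, whereas your middle integral comes from $\int_0^\infty t^k\,\dd F_d(t)$; both equal $\Gamma(1+k/\delta)$, so the conclusion stands.
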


\begin{proof}
From standard extreme-value theory (see, e.g., \cite[p.~59]{KotzN2000}), the random variable
$\zeta(n,d)/\kappa_{n,d}$ in \eqref{d2URn} converges in distribution to the random variable $\xi_d$.
\end{proof}

\begin{remark}\label{R:moments-duRn}
Lemma~\ref{L:main-extreme-value2} yields the following approximations for the $s$-moment of $d(\ub,\Rb_n)$ for large $n$ and any given $\ub$:
\bea
M_s(d,n,a,r) = \int_{\zeta\geq 0} \left[(r-a)^2 + 4\,a\,r\,\kappa_{n,d}\,\zeta\right]^{s/2}\, \dd F_d(\zeta) \,,
\eea
with $r=\|\ub\|$.
Explicit expressions are easily obtained for even $s$. Assuming that $\|\ub\|=1$, this gives in particular the approximations
\bea
\var_2 &=& 16\,a^2\,\kappa_{n,d}^2\,\var\{\xi_d\} = 16\,a^2\,\kappa_{n,d}^2 \left[\Gamma(1+2/\delta)-\Gamma^2(1+1/\delta)\right] \,, \\
\var_4 &=& 64\,a^2\,\kappa_{n,d}^2\, \left\{(1-a)^4\,\var\{\xi_d\} + 4\,a^2\,\kappa_{n,d}^2\,\var\{\xi_d^2\} \right. \\
&& \left. + 4\,(1-a)^2\,a\,\kappa_{n,d}\,\left[\Gamma(1+3/\delta)-\Gamma(1+1/\delta)\Gamma(1+2/\delta)\right] \right\} \,.
\eea
for $\var_s=\var\{d^s(\ub,\Rb_n)\}$ when $s=2$ and $s=4$, respectively.
\fin
\end{remark}

For each fixed $\ub$, equation~\eqref{d2URnb} gives the limiting distribution of the suitably normalised nearest-neighbour distance. Averaging this pointwise extreme-value approximation over a spherically symmetric target $\mu$ motivates the following approximation of $D_{\mu,s}(\PP_a^{[n]})$.

\begin{definition}\label{D:main-extreme-value}
Assume that $d\geq 3$ and that $\mu$ is spherically symmetric. The extreme-value approximation of the expected $(\mu,s)$-distortion $D_{\mu,s}(\PP_a^{[n]})$ is defined by
\be\label{Ds-approx}
\widehat E_{\mu,s}^s(n;a) = \int_{r\geq 0} \int_{\zeta\geq 0} \left[(r-a)^2 + 4\,a\,r\,\kappa_{n,d}\,\zeta\right]^{s/2}\, \dd F_d(\zeta) \, \dd \Psi(r) \,,
\ee
where $\Psi(\cdot)$ is the c.d.f.\ of $\|U\|$ with $U\simd \mu$, $F_d(t)=1-\exp(-t^\delta)$ with $\delta=(d-1)/2$, and $\kappa_{n,d}$ is the $1/n$ quantile of $\beta_{\delta,\delta}$ defined by $I_{\kappa_{n,d}}(\delta,\delta)=1/n$. The extreme-value approximation $\widehat D_{\mu,s}(\PP_a^{[n]})$ of $D_{\mu,s}(\PP_a^{[n]})$ is defined by $\widehat D_{\mu,s}(\PP_a^{[n]})=\widehat E_{\mu,s}^s(n;a)$.
\end{definition}

The approximation \eqref{Ds-approx} is derived from the following considerations. Since $\mu$ is spherically symmetric, we can write $U\simd r\cdot U^{(d)}$ where $U^{(d)}$ is uniformly distributed on $\SS_{d-1}(1)$, $r$ has the c.d.f.\ $\Psi(\cdot)$ and the two variables are independent. Replacing the conditional distribution of $d^2(U,\Rb_n)$, given $r=\|U\|$, by the extreme-value limit in \eqref{d2URnb}, and then averaging with respect to $r$ and the Weibull variable, yields \eqref{Ds-approx}. This is an approximation, not an exact identity of means. A formal proof that the variability of $E_{\mu,s}^s(\Rb_n)$ vanishes asymptotically is provided below in Theorem~\ref{Th:distortion-concentration}. We first introduce some notation.

Let $Z_1,\ldots,Z_n$ be i.i.d.\ uniform on $\SS_{d-1}(1)$ and let $\Rb_{n,a}$ be a random quantiser on $\SS_{d-1}(a)$:
\bea
\Rb_{n,a}=\{aZ_1,\ldots,aZ_n\}\simd\PP_a^{[n]} \,.
\eea
Denote $Q_{n,a}=E_{\mu,s}^s(\Rb_{n,a})$ the realised distortion of this random quantiser, so that $\Ex\{Q_{n,a}\}=D_{\mu,s}(\PP_a^{[n]})$. For $U\simd\mu$, define
\bea
B_{\mu,s}(a)=\Ex_\mu\{(\|U\|+a)^s\}\,.
\eea
For $n\geq2$, let $V_{i,n}$ be the spherical Voronoi cell of $Z_i$ and let
$p_{i,n}$ be its normalised spherical measure (ties may be resolved arbitrarily, since they form a set of spherical measure zero almost surely). Denote by $v_d(\theta)$ the normalised surface measure of a spherical cap of angular radius $\theta$, and by $N_d(\theta)$ the smallest number of such caps required to cover $\SS_{d-1}(1)$.

\begin{theorem}\label{Th:distortion-concentration}
For any $d\geq2$, $n\geq2$, $s>0$, $a>0$, and $\mu$ such that $B_{\mu,s}(a)<\infty$, we have
\be\label{eq:Voronoi-variance1}
\var\{Q_{n,a}\} \leq B_{\mu,s}^2(a)\,\Ex\left\{\sum_{i=1}^n p_{i,n}^2\right\}
\leq B_{\mu,s}^2(a)\,\Ex\left\{\max_{1\leq i\leq n}p_{i,n}\right\} \,,
\ee
and
\be
\var\{Q_{n,a}\} \leq B_{\mu,s}^2(a)\,B(d,n)\,,
\label{eq:Voronoi-variance2}
\ee
where 
\bea
B(d,n) = \inf_{\theta\in(0,\pi)}\left\{
v_d(\theta)+N_d(\theta/2)\exp[-n v_d(\theta/2)]
\right\}\,.
\eea
Therefore,
\bea
\Pr\{|Q_{n,a}-D_{\mu,s}(\PP_a^{[n]})|\geq t\} \leq \frac{B_{\mu,s}^2(a)\,B(d,n)}{t^2}\,,
\eea
and for every fixed $d$,
\be\label{eq:fixed-d-concentration}
Q_{n,a}-D_{\mu,s}(\PP_a^{[n]}) \to 0
\quad\hbox{in $L_2$ and in probability as }n\to\infty.
\ee
\end{theorem}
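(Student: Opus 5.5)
The plan is to bound the variance with the Efron--Stein inequality, using resampling of one quantiser point at a time. Write $Q_{n,a}=f(Z_1,\ldots,Z_n)$ with the $Z_i$ independent. Let $Z_i'$ be an independent copy of $Z_i$, and let $Q^{(i)}$ denote the distortion after replacing $Z_i$ by $Z_i'$. Efron--Stein gives
\[
\var\{Q_{n,a}\}\leq \tfrac12\sum_{i=1}^n \Ex\{(Q_{n,a}-Q^{(i)})^2\}.
\]
An alternative with the same effect is to bound $\var\{Q_{n,a}\}\le \sum_i\Ex\{(Q_{n,a}-Q_{n,a}^{(-i)})^2\}$, where $Q_{n,a}^{(-i)}$ is the distortion with the $i$-th point deleted; this is the version I would use first.

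The key pointwise observation is as follows. Removing $aZ_i$ only changes $d(\ub,\cdot)$ for $\ub$ whose nearest quantiser point is $aZ_i$. By rotational symmetry (this is where spherical symmetry of $\mu$ enters; for general $\mu$ I would instead work with the spherical cone decomposition), these are the $\ub=r\,\mathbf{w}$ with $\mathbf{w}\in V_{i,n}$. For such $\ub$ both distances lie in $[0,\|\ub\|+a]$, so the change in $d^s$ is at most $(\|\ub\|+a)^s$. Integrating over $\mu$ and using that $\|U\|$ and $U/\|U\|$ are independent with the direction uniform gives
\[
0\le Q_{n,a}^{(-i)}-Q_{n,a}\le B_{\mu,s}(a)\,p_{i,n}.
\]
Squaring and summing yields $\var\{Q_{n,a}\}\le B_{\mu,s}^2(a)\,\Ex\{\sum_i p_{i,n}^2\}$, up to the Efron--Stein constant. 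One must check that the deletion form gives constant $1$; the replacement form gives $\tfrac12\,\Ex\{(X_i+X_i')^2\}\le \Ex\{p_{i,n}^2+p'^2\}$, which needs some care about cell measures in the modified configuration. The second inequality in \eqref{eq:Voronoi-variance1} is immediate from $\sum_i p_{i,n}=1$.

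For \eqref{eq:Voronoi-variance2} I would bound $\Ex\{\max_i p_{i,n}\}$. Fix $\theta$ and cover the unit sphere by $N_d(\theta/2)$ caps $C_j$ of angular radius $\theta/2$. Let $G$ be the event that every $C_j$ contains some $Z_k$. On $G$, every direction $\mathbf{w}$ is within angle $\theta$ of some $Z_k$ (it lies in some cap $C_j$, which contains some $Z_k$, so the angle is at most $\theta$). Hence every Voronoi cell $V_{i,n}$ lies in the cap of angular radius $\theta$ around $Z_i$, and $p_{i,n}\le v_d(\theta)$. Off $G$ we use $\max_i p_{i,n}\le1$. A union bound gives
\[
\Pr(G^c)\le N_d(\theta/2)\,(1-v_d(\theta/2))^n\le N_d(\theta/2)\,e^{-nv_d(\theta/2)},
\]
and taking the infimum over $\theta$ yields $B(d,n)$. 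Chebyshev then gives the tail bound. Finally, for fixed $d$, choose $\theta\to0$ slowly enough that $n\,v_d(\theta/2)-\log N_d(\theta/2)\to\infty$; this is possible since $N_d(\theta/2)$ grows only polynomially in $1/\theta$. Then $B(d,n)\to0$, giving convergence in $L_2$ and in probability.

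The main obstacle is to get the Efron--Stein step with exactly constant $1$ and the clean Voronoi-cell bound, including the claim that only the cell of the removed point is affected. The case $d=2$ and the integrability under $B_{\mu,s}(a)<\infty$ need only routine checks.
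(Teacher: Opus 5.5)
Your proposal is correct and follows the paper's proof essentially step for step: the deletion form of Efron--Stein (which does hold with constant $1$, because $Q_{n,a}^{(-i)}$ depends only on the other points), the bound $0\leq Q_{n,a}^{(-i)}-Q_{n,a}\leq B_{\mu,s}(a)\,p_{i,n}$, and the cap-covering union bound for $\Ex\{\max_i p_{i,n}\}$. The only difference is the final fixed-$d$ step: the paper uses almost-sure density of the directions and dominated convergence in \eqref{eq:Voronoi-variance1}, whereas you show $B(d,n)\to0$ directly, for which it already suffices to fix $\theta$, let $n\to\infty$, and then let $\theta\to0$.
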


\begin{proof}
Let $Q_{n,a}^{(-i)}$ denote the distortion obtained after deleting the point $aZ_i$.  This random variable is independent of $Z_i$ conditionally on the other $n-1$ points.  Deleting $aZ_i$ changes the nearest-neighbour distance only for directions in $V_{i,n}$.  Since the distance between a point of norm $r$ and any point of norm $a$ is at most $r+a$, spherical symmetry and independence of the radial and angular components give
\bea
0\leq Q_{n,a}^{(-i)}-Q_{n,a}\leq B_{\mu,s}(a)p_{i,n}.
\eea
The deletion form of the Efron--Stein inequality, see \cite{BoucheronLB2004}, therefore yields
\bea
\var\{Q_{n,a}\}\leq
\sum_{i=1}^n\Ex\{(Q_{n,a}^{(-i)}-Q_{n,a})^2\}
\!\!\! &\leq& \!\!\!B_{\mu,s}^2(a)\, \Ex\left\{\sum_{i=1}^np_{i,n}^2\right\} \\
&\leq& \!\!\! B_{\mu,s}^2(a)\,\Ex\left\{\max_{1\leq i\leq n}p_{i,n}\right\} \,,
\eea
where the last inequality follows from $\sum_i p_{i,n}=1$. This completes the proof of \eqref{eq:Voronoi-variance1}.

Let
$\mo_n=\max_{z\in\SS_{d-1}(1)}\min_i\arccos(z\TT Z_i)$ be the angular covering radius. Every Voronoi cell is contained in a cap of radius $\mo_n$, and hence
$\max_i p_{i,n}\leq v_d(\mo_n)$. Now take a covering of the sphere by $N_d(\theta/2)$ caps of radius $\theta/2$ and centres $c_k$, $k=1,\ldots,N_d(\theta/2)$. Assume that each cap contains at least one $Z_i$. Then, any $z\in\SS_{d-1}(1)$ belongs to some cap with centre $c_k$ and radius $\theta/2$ and this cap also contains a $Z_i$, implying that $\mo_n\leq\theta$.
A union bound gives
\bea
\Pr\{\mo_n>\theta\}
\leq N_d(\theta/2)[1-v_d(\theta/2)]^n
\leq N_d(\theta/2)e^{-nv_d(\theta/2)}\,.
\eea
It follows that
$\Ex\{\max_i p_{i,n}\}\leq v_d(\theta) + \Pr\{\mo_n>\theta\}$, proving \eqref{eq:Voronoi-variance2}.

For fixed $d$, the i.i.d.\ directions are almost surely dense on the sphere, so $\mo_n\to0$ and $\max_i p_{i,n}\to0$ almost surely. Dominated convergence in \eqref{eq:Voronoi-variance1} proves \eqref{eq:fixed-d-concentration}.
\end{proof}

For $s$ even, $\widehat E_{\mu,s}^s(n;a)$ can be expressed explicitly in terms of moments of $\|U\|$, $M_{\Psi,k}=\Ex_\mu\{\|U\|^k\}=\int_{r\geq 0}  r^k\,\dd \Psi(r)$, and moments $\Ex\{\zeta^k\}=\Gamma(1+k/\delta)$ of the Weibull distribution. In particular, for $s=2$ and 4 we get the following extreme-value approximations for the $(\mu,s)$-distortions:
\be
\widehat E_{\mu,2}^2(n;a) &=& M_{\Psi,2} - 2\,a\,M_{\Psi,1}+a^2 + 4\,a\,\kappa_{n,d}\,M_{\Psi,1}\,\Gamma(1+1/\delta)  \,, \label{E2^2-general}\\
\widehat E_{\mu,4}^4(n;a) &=& M_{\Psi,4} - 4\,a\,M_{\Psi,3}+6\,a^2\,M_{\Psi,2}-4\,a^3\,M_{\Psi,1}+a^4 \nonumber \\
&& + \, 8\,a\,\kappa_{n,d}\,\Gamma(1+1/\delta)
\left( M_{\Psi,3} - 2\,a\,M_{\Psi,2}+a^2\,M_{\Psi,1} \right) \nonumber \\
&& + \, 16\,a^2\,\kappa_{n,d}^2\,M_{\Psi,2}\,\Gamma(1+2/\delta) \,. \label{E4^2-general}
\ee

Denote by $\widehat {a^*}=\widehat {a^*}(d,n,s)$ the value of $a$ that minimises $\widehat E_{\mu,s}(n;a)$; it is an extreme-value approximation of the exact minimiser $a^*$ of $D_{\mu,s}(\PP_a^{[n]})$. For $s=2$, Lemma~\ref{L:main-extreme-value2} gives $m_{n,d}\simeq\kappa_{n,d}\Gamma(1+1/\delta)$ in \eqref{exact-mnd}. The next corollary is therefore the extreme-value counterpart of Corollary~\ref{Coro:astar-s2-exact}.

\begin{corollary}\label{Coro:astar-s2-Pa}
For $d\geq 3$, $\mu$ spherically symmetric and $\Rb_n\simd \PP_a^{[n]}$, we have
\be
\widehat{a^*} = \widehat{a^*}(d,n,2) &=& M_{\Psi,1}\,\left[ 1 - 2\,\kappa_{n,d}\,\Gamma(1+1/\delta) \right]\,, \label{hat_a_2} \\
\widehat E_{\mu,2}^2(n;\widehat {a^*}) &=& M_{\Psi,2} - (\widehat {a^*})^2 \,. \nonumber
\ee
\end{corollary}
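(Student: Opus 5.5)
The plan is to start from the explicit quadratic expression \eqref{E2^2-general} for the extreme-value approximation with $s=2$, and to minimise it in $a$ exactly as in the proof of Corollary~\ref{Coro:astar-s2-exact}. First I would recheck \eqref{E2^2-general} from Definition~\ref{D:main-extreme-value}. With $s=2$ the integrand in \eqref{Ds-approx} is $(r-a)^2+4ar\kappa_{n,d}\zeta$, which is linear in $\zeta$. Integrating against the Weibull law $F_d$, whose mean is $\Ex\{\xi_d\}=\Gamma(1+1/\delta)$ by Lemma~\ref{L:main-extreme-value2}, gives $(r-a)^2+4ar\kappa_{n,d}\Gamma(1+1/\delta)$. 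Integrating then with respect to $\Psi$ gives
$$\widehat E_{\mu,2}^2(n;a)=M_{\Psi,2}+a^2-2a\,M_{\Psi,1}\left[1-2\kappa_{n,d}\Gamma(1+1/\delta)\right].$$
This has exactly the form of \eqref{exact-D2-Pa}, with $m_{n,d}$ replaced by $\kappa_{n,d}\Gamma(1+1/\delta)$. Finiteness of $M_{\Psi,1}$ and $M_{\Psi,2}$ is implicit, since the approximation must be finite.

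The function is a strictly convex quadratic in $a$, with leading coefficient $1$. Setting the derivative to zero gives the stationary point $M_{\Psi,1}[1-2\kappa_{n,d}\Gamma(1+1/\delta)]$. Substituting back, the value at this point is $M_{\Psi,2}-(\widehat{a^*})^2$, since $c^2-2c^2=-c^2$. Because $\widehat{a^*}$ is defined as the minimiser of $\widehat E_{\mu,s}(n;a)$, the square root of the quantity above, and the square root is monotone, minimising $\widehat E_{\mu,2}^2$ gives the same minimiser.

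The only genuine subtlety is the constraint $a\geq0$. The unconstrained stationary point is admissible only if $1-2\kappa_{n,d}\Gamma(1+1/\delta)\geq0$. In the exact case this followed from $m_{n,d}\leq 1/2$. Here I would argue it for the relevant range of $n$ as follows. For $n\geq2$, $\kappa_{n,d}\leq 1/2$, because the beta law is symmetric with median $1/2$. Also $\Gamma(1+1/\delta)\leq1$ whenever $1/\delta\in(0,1]$, that is $d\geq3$, since $\Gamma\leq1$ on $[1,2]$. Together these give $2\kappa_{n,d}\Gamma(1+1/\delta)\leq1$. For $n=1$, $\kappa_{1,d}=1$ and the formula could become negative; I would either restrict the statement to $n\geq2$, or note that the extreme-value approximation is only meaningful for large $n$, where $\kappa_{n,d}\to0$. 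Securing this nonnegativity is the one step needing care; the rest is routine algebra.
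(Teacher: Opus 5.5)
Your proposal is correct and follows the paper's route: the paper obtains the corollary by minimising the quadratic \eqref{E2^2-general}, which is \eqref{exact-D2-Pa} with $m_{n,d}$ replaced by $\kappa_{n,d}\Gamma(1+1/\delta)$, exactly as in Corollary~\ref{Coro:astar-s2-exact}. You also check that $2\kappa_{n,d}\Gamma(1+1/\delta)\leq 1$ for $n\geq 2$, using $\kappa_{n,d}\leq 1/2$ and $\Gamma\leq 1$ on $[1,2]$, and you note that the formula gives a negative radius for $n=1$, where $\kappa_{1,d}=1$; both points settle an admissibility issue the paper leaves implicit.
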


The top-left panel of Figure~\ref{F:sphere} presents plots of $a^*(d,n,2)$ and $\widehat {a^*}(d,n,2)$ for various $n$ and $d$ when $\mu$ is uniform on $\SS_{d-1}(1)$, showing the fast convergence of $\widehat {a^*}$ to $a^*$ as $n$ increases. On the top-right panel, the plots of
$D_{\mu,s}^{1/s}(\PP^{[n]}_{\widehat{a^*}})/D_{\mu,s}^{1/s}(\PP^{[n]}_1)$ and $D_{\mu,s}^{1/s}(\PP^{[n]}_{a^*})/D_{\mu,s}^{1/s}(\PP^{[n]}_1)$ are indistinguishable. In the right panel of  Figure~\ref{F:normal}, where $\mu$ is normal, the plots of $a^*(d,n,2)$ and $\widehat {a^*}(d,n,2)$ are also indistinguishable.
Other illustrations are provided in the supplementary material.

The expressions for $\widehat {a^*} =\widehat {a^*}(d,n,4)$ and $\widehat E_{\mu,4}^4(n;\widehat {a^*})$ can also be written in analytic form (by finding the roots of a third-degree polynomial), but the expressions are cumbersome.

\subsection{Three asymptotic regimes for $n$ growing with $d$}\label{S:n-growing-with-d}

In view of Lemma~\ref{L:main-extreme-value2}, the asymptotic behaviour of $\kappa_{n,d}$ is the key element in understanding that of $d^2(\ub,\Rb_n)$. For $d=3$, $\beta_{1,1}$ is uniform on $[0,1]$ and we simply have $\kappa_{n,3}=1/n$. For $d>3$ there is no explicit formula for $\kappa_{n,d}$, but it can easily be computed numerically by solving $I_{\kappa_{n,d}}(\delta,\delta)=1/n$. For fixed $t\leq 1/2$, $I_t(\delta,\delta)$ decreases as $\delta$ increases. Therefore, $\kappa_{n,d}$ is an increasing function of $d$ for any fixed $n\geq 2$ (it is also a decreasing function of $n$ for any fixed $d$). When $n$ tends to infinity with $d$ we have the following properties.

\begin{proposition} \label{th:kappa_n}
For fixed $d\geq 3$, the $1/n$ quantile $\kappa_{n,d}$ of  $\beta_{\delta,\delta}$ with $\delta=(d-1)/2$ tends to zero as $n$ tends to infinity. When $d \to \infty$ and $n=n(d)$ grows with $d$, we have the following three cases:\\
\noindent{\textit(i)} if $n^{1/d}\to\infty$, then $\lim_{d\to\infty} \kappa_{n,d} = 0$;\\
\noindent{\textit(ii)} if $n^{1/d}\to\ml$ with $\ml>1$, then
\be \label{asymptotic-kappa} \lim_{d\to\infty} \kappa_{n,d} = \frac12 \left[1-\sqrt{1-1/\ml^2}\right]\, ;
\ee
\noindent{\textit(iii)} if $(\log n)/d \to 0$, then $\lim_{d\to\infty} \kappa_{n,d} = \frac12$.
\end{proposition}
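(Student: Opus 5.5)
The plan is to reduce every claim to the asymptotics of the lower tail of the symmetric Beta distribution, $I_t(\delta,\delta)=\frac{1}{B(\delta,\delta)}\int_0^t [u(1-u)]^{\delta-1}\,\dd u$. For fixed $d\geq 3$, $t\mapsto I_t(\delta,\delta)$ is continuous and strictly increasing on $[0,1]$ with $I_0=0$. Since $I_{\kappa_{n,d}}(\delta,\delta)=1/n\to 0$, we get $\kappa_{n,d}\to 0$; indeed $I_t\geq c_\delta t^\delta$ near $0$ gives $\kappa_{n,d}=\SO(n^{-1/\delta})$. For the joint regime we use the exponential rate of $I_t(\delta,\delta)$ as $\delta\to\infty$ with $t\in(0,1/2)$ fixed. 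The function $u(1-u)$ is increasing on $[0,1/2]$, so $\int_0^t[u(1-u)]^{\delta-1}\dd u$ lies between $\eta[(t-\eta)(1-t+\eta)]^{\delta-1}$ and $t\,[t(1-t)]^{\delta-1}$. Combined with Stirling's formula $B(\delta,\delta)=4^{-\delta}\,\e1^{o(\delta)}$, this gives
\bea
\lim_{\delta\to\infty}\frac1\delta\log I_t(\delta,\delta)=\log[4t(1-t)]=:-h(t)\,,
\eea
uniformly on compact subsets of $(0,1/2)$. The rate $h$ is continuous and strictly decreasing on $(0,1/2]$, with $h(0^+)=\infty$ and $h(1/2)=0$.

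Since $\delta=(d-1)/2$, the defining equation reads $\frac{1}{\delta}\log I_{\kappa_{n,d}}(\delta,\delta)=-\frac{\log n}{\delta}\sim -\frac{2\log n}{d}$. We then use a monotonicity sandwich. Fix $t\in(0,1/2)$ and compare $I_t(\delta,\delta)$ with $1/n$: we have $\kappa_{n,d}<t$ if and only if $I_t(\delta,\delta)>1/n$. The latter holds eventually if $h(t)<\lim 2\log n/d$, and fails eventually if $h(t)>\lim 2\log n/d$.

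The three regimes then follow.
\begin{itemize}
\item In case (i), $2\log n/d\to\infty$, so for every $t>0$ we eventually have $\kappa_{n,d}<t$, hence $\kappa_{n,d}\to0$.
\item In case (iii), $2\log n/d\to 0<h(t)$ for every $t<1/2$, so eventually $\kappa_{n,d}\geq t$. Combined with the bound $\kappa_{n,d}\leq 1/2$ (valid for $n\geq2$ by symmetry of $\beta_{\delta,\delta}$), this gives $\kappa_{n,d}\to 1/2$.
\item In case (ii), $2\log n/d\to 2\log\ml$, and the sandwich forces $\kappa_{n,d}\to t^*$, where $t^*$ is the unique solution of $4t(1-t)=\ml^{-2}$ in $(0,1/2)$, namely $t^*=\frac12[1-\sqrt{1-1/\ml^2}]$. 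This is exactly \eqref{asymptotic-kappa}.
\end{itemize}

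The main obstacle is the large-deviation estimate for $I_t(\delta,\delta)$. The lower bound requires choosing $\eta=\eta(\delta)$, for instance a small fixed $\eta$ followed by letting $\eta\to0$, so that the prefactors $\eta$, $t$, and $B(\delta,\delta)^{-1}$ contribute only $\e1^{o(\delta)}$. The statement must also hold uniformly enough that the strict inequalities $h(t)\gtrless 2\log\ml$ transfer to the sequence. Because $t$ is fixed in each comparison, pointwise convergence at two values $t^*\pm\epsilon$ suffices, which removes the need for genuine uniformity. The replacement of $\delta$ by $d/2$ and of $n^{1/d}\to\ml$ by $\log n/d\to\log\ml$ is harmless.
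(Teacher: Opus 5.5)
Your proof is correct. It rests on the same underlying fact as the paper's proof: $I_t(\delta,\delta)$ behaves like $[4t(1-t)]^{\delta}$ up to factors $e^{o(\delta)}$. You execute it differently, however.

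The paper works with explicit non-asymptotic bounds on $I_t(\delta,\delta)$.
\begin{itemize}
\item The lower bound is $I_t(\delta,\delta)\geq [t(1-t)]^\delta/(\delta B(\delta,\delta))$. It comes from $1-x\geq 1-t$ on $[0,t]$ and exact integration of $x^{\delta-1}$, so no $\eta$-trick is needed.
\item For $t\leq 1/2$, the upper bound is $I_t(\delta,\delta)\leq \frac12[4t(1-t)]^{\lfloor\delta\rfloor-1}$. It is obtained by reducing to the integer parameter $\lfloor\delta\rfloor$, via monotonicity of $I_t(\delta,\delta)$ in $\delta$, and then using the beta--binomial identity.
\end{itemize}
Inverting these gives the closed-form envelope \eqref{bounds-kappa} for $\kappa_{n,d}$, valid for every $n\geq 2$ and $d\geq 5$. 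The three regimes then follow by letting $d\to\infty$ on both sides, using $c_d=[\delta B(\delta,\delta)]^{1/\delta}\to 1/4$.

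You instead prove only the logarithmic asymptotics $\delta^{-1}\log I_t(\delta,\delta)\to\log[4t(1-t)]$ at fixed $t$. For this you use nothing more than monotonicity of the integrand on $[0,1/2]$ and Stirling's formula for $B(\delta,\delta)$. You then transfer the result to the quantile by comparing $I_t(\delta,\delta)$ with $1/n$ at the fixed points $t^*\pm\epsilon$, which avoids any explicit inversion.

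What each route buys:
\begin{itemize}
\item Your route is more self-contained. It does not need the monotonicity of $I_t(\delta,\delta)$ in $\delta$, which the paper asserts without proof, nor the binomial representation.
\item Your route also exhibits the three regimes as one statement, $\kappa_{n,d}\to h^{-1}(\lim_{d\to\infty}2\log n/d)$, with rate function $h(t)=-\log[4t(1-t)]$ and the conventions $h^{-1}(\infty)=0$, $h^{-1}(0)=1/2$.
\item The paper's route yields explicit, computable bounds on $\kappa_{n,d}$ for finite $(n,d)$. It uses them numerically, as the envelope in the left panel of Figure~\ref{F:sphere-approx0}, and they carry quantitative information on the speed of convergence that a pure large-deviation limit does not give.
\end{itemize}
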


\begin{proof}
Since $x^{\delta-1}(1-t)^\delta\leq x^{\delta-1}(1-t)^{\delta-1}\leq x^{\delta-1}(1-x)^{\delta-1}$ for $\delta\geq 1$ and all $t\in[0,1)$ and $x\in[0,t]$, we have
\be\label{It-ineq1}
\frac{t^\delta (1-t)^\delta}{\delta B(\delta,\delta)} \leq I_t(\delta,\delta) \,. \ee
The derivation of an exploitable upper bound is more delicate.
Assume that $d \geq 3$ and let
$\delta'=\lfloor \delta \rfloor$, the largest integer smaller than or equal to $\delta$.
As $n\geq 2$, we are only interested in values of $t$ less than $1/2$. Since $\delta'\leq \delta$, we have $I_t(\delta,\delta) \leq I_t(\delta',\delta')$, which
can be calculated explicitly by successive integration by parts. Direct calculations yield the following special case of the well-known relation between the c.d.f.\ of the beta and binomial distributions:
\be\label{It1}
I_t(\delta',\delta') &=& \sum_{k=0}^{\delta'-1} \binom{2\delta'-1}{k} t^{2\delta'-1-k}(1-t)^k \,, \\
&=& t^{\delta'}(1-t)^{\delta'-1} \sum_{k=0}^{\delta'-1} \binom{2\delta'-1}{k} \left(\frac{t}{1-t}\right)^{\delta'-1-k} \,.
\ee
Using the property $t/(1-t) \leq 1$ for $t\leq 1/2$, with $\sum_{k=0}^{\delta'-1} \binom{2\delta'-1}{k}=4^{\delta'-1}$, we thus obtain
\be \label{upper_bound}
I_t(\delta,\delta) \leq I_t(\delta',\delta') \leq t^{\delta'}[4\,(1-t)]^{\delta'-1} \leq \frac12\,[4\,t(1-t)]^{\delta'-1}\,, \ t\leq 1/2\,.
\ee

Together with \eqref{It-ineq1}, this implies, for $n\geq 2$ and $d\geq 5$,
\be\label{bounds-kappa}
\frac12 \left[1-\sqrt{1- (2/n)^{1/(\delta'-1)}}\right]  \leq \kappa_{n,d} \leq \frac12 \left[1-\sqrt{(1-4\,c_d\,n^{-1/\delta})_+}\right] \,,
\ee
where $(x)_+=\max\{x,0\}$ and $c_d=[\delta\,B(\delta,\delta)]^{1/\delta} = 1/4+ (\log d)/(4\,d)+\SO(1/d)$, $d\to\infty$ (with, moreover, $(4\,c_d)^\delta<d+2$ for all $d \geq 2$, implying that the right-hand side of \eqref{bounds-kappa} is strictly smaller than  1/2 for $n\geq d+2$).

When $d$ is fixed and $n\to\infty$, or $n^{1/d}\to\infty$ as $d\to\infty$, $n^{-1/d}\to 0$ implying that the right-hand side of \eqref{bounds-kappa} tends to zero. Therefore, $\kappa_{n,d}\to 0$.

In case (\textit{ii}), $n^{-1/d}\to\ml^{-1}$. Taking limits on both sides of \eqref{bounds-kappa}, and using $\delta/d\to1/2$, $\delta'/d\to1/2$, and $c_d\to1/4$, we get $\lim_{d\to\infty}\kappa_{n,d}=(1/2)[1-\sqrt{1-1/\ml^2}]$.

When $\log n = o(d)$, (case (\textit{iii})), $n^{-1/d}\to 1$ and the left-hand side of \eqref{bounds-kappa} tends to 1/2.
\end{proof}

The left panel of Figure~\ref{F:sphere-approx0} plots $\kappa_{n,d}$ as a function of $d$ for different $n$: the continuation of each curve (with $n$ fixed) goes to the limiting value 1/2 in view of Proposition~\ref{th:kappa_n}-(\textit{iii}). The bounds in \eqref{bounds-kappa} are asymptotically accurate for large $n$, particularly the upper bound, but lack precision for smaller $n$. For illustration, only the envelope for $n = 10^3$ is shown.

\vsp
The asymptotic behaviour of $\kappa_{n,d}$ as $n(d)$ grows with $d$ governs that of the extreme-value approximation of the $(\mu,s)$-distortion from Definition~\ref{D:main-extreme-value} and of the extreme-value approximation $\widehat{a^*}$ of the minimiser; see Corollary~\ref{Coro:astar-s2-Pa}. Under suitable (non-restrictive) assumptions on $\mu$, we show below (Theorem~\ref{Th:joint-exact-distortion-limit}) that the optimal radius $a^*(d,n(d),s)$
converges to a limit $a_\infty$ that depends on the growth regime of $n(d)$ but not on $s$. We also derive the limit of the associated expected $(\mu,s)$-distortion.

Define
\be\label{joint-kappa-limits}
\kappa_\infty=
\left\{
\begin{array}{ll}
0 & \mbox{if } n^{1/d}\to\infty\,,\\[1mm]
\displaystyle \frac12\left(1-\sqrt{1-1/\ml^2}\right)
& \mbox{if } n^{1/d}\to\ml\in(1,\infty)\,,\\[3mm]
1/2 & \mbox{if } (\log n)/d\to0\,.
\end{array}
\right.
\ee

\begin{theorem}\label{Th:joint-exact-distortion-limit}
For $s>0$ and each $d$, let $\mu_d$ be a probability measure on $\RR^d$, let $U_d\simd\mu_d$, $R_d=\|U_d\|$, and assume that, for some
$r>0$ and $\eta>0$,
\be\label{joint-radius-assumptions}
R_d \to r \mbox{ in probability as } d\to\infty \mbox{ and } \sup_d\Ex\{R_d^{s+\eta}\}<\infty\,.
\ee
Let $n=n(d)\to\infty$ satisfy one of the three regimes in \eqref{joint-kappa-limits}. If $a_d\to a\geq0$, then
\be\label{joint-exact-D-limit}
D_{\mu_d,s}(\PP_{a_d}^{[n(d)]}) \to
\left[(r-a)^2+4\,ar\kappa_\infty\right]^{s/2}\,, \ d\to\infty\,.
\ee
Let $a_d^*=a^*(d,n(d),s)$ be any minimiser of
$D_{\mu_d,s}(\PP_a^{[n(d)]})$ over $a\geq0$. Then
\be\label{joint-exact-a-limits}
a_d^*\to r(1-2\kappa_\infty)
=\left\{
\begin{array}{ll}
r & \mbox{if } n^{1/d}\to\infty\,,\\[1mm]
r\sqrt{1-1/\ml^2}
& \mbox{if } n^{1/d}\to\ml\in(1,\infty)\,,\\[1mm]
0 & \mbox{if } (\log n)/d\to0\,,
\end{array}
\right.
\ee
and
\be\label{joint-exact-min-D-limits}
\min_{a\geq0}D_{\mu_d,s}(\PP_a^{[n(d)]})
\to
\left[4\,r^2\kappa_\infty(1-\kappa_\infty)\right]^{s/2}
=\left\{
\begin{array}{ll}
0 & \!\!\! \mbox{if } n^{1/d}\to\infty\,,\\[1mm]
(r/\ml)^s
& \!\!\!\mbox{if } n^{1/d}\to\ml\in(1,\infty)\,,\\[1mm]
r^s & \!\!\! \mbox{if } (\log n)/d\to0\,.
\end{array}
\right.
\ee
\end{theorem}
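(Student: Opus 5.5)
We prove \eqref{joint-exact-D-limit} from the exact representation \eqref{d2URn} of Proposition~\ref{th:dist}. Conditionally on $U_d$,
\[
d^2(U_d,\Rb_n)\eqd (R_d-a_d)^2+4a_dR_d\,\zeta(n,d),
\]
where $\zeta(n,d)$ is independent of $U_d$. Hence
\[
D_{\mu_d,s}(\PP_{a_d}^{[n]})=\Ex\bigl\{[(R_d-a_d)^2+4a_dR_d\,\zeta(n,d)]^{s/2}\bigr\}.
\]
The first step is to show that $\zeta(n,d)\to\kappa_\infty$ in probability as $d\to\infty$. The median of $\zeta(n,d)$ is the $1-2^{-1/n}$ quantile of $\beta_{\delta,\delta}$, and more generally $\Pr\{\zeta(n,d)\leq x\}=1-[1-I_x(\delta,\delta)]^n$. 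Fix $x<\kappa_\infty$. By the lower bound \eqref{It-ineq1} combined with the upper bound \eqref{upper_bound}, $I_x(\delta,\delta)$ behaves like $[4x(1-x)]^{d/2+o(d)}$, so $nI_x(\delta,\delta)\to0$. Now fix $x>\kappa_\infty$ with $x<1/2$. The same estimates give $nI_x(\delta,\delta)\to\infty$, since $4x(1-x)>\ml^{-2}$ (respectively $>0$ or $>1$ in the other regimes). In the regime $\kappa_\infty=1/2$, the argument uses only the lower-tail side together with $\zeta\leq$ the minimum of something with mean $1/2$ and concentration. Concretely, $\beta_{\delta,\delta}$ concentrates at $1/2$ with variance $O(1/d)$, so every $\zeta_i\leq 1/2+\epsilon$ with high probability is not needed; instead $\zeta\leq\zeta_1$, and $\zeta_1\to1/2$ in probability. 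This is exactly the computation already done in the proof of Proposition~\ref{th:kappa_n}, now applied to $x$ on either side of the limit rather than to the $1/n$ quantile itself, and it is the main technical step. In particular, care is needed with the polynomial prefactors $\delta B(\delta,\delta)$, which are absorbed into the $o(d)$ terms of the exponent since $c_d\to1/4$.

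Given $\zeta(n,d)\to\kappa_\infty$, $R_d\to r$ and $a_d\to a$, the continuous mapping theorem gives
\[
[(R_d-a_d)^2+4a_dR_d\zeta]^{s/2}\to[(r-a)^2+4ar\kappa_\infty]^{s/2}
\]
in probability. Since $\zeta\in[0,1]$, the integrand is bounded by $(R_d+a_d)^s$. The uniform moment bound $\sup_d\Ex\{R_d^{s+\eta}\}<\infty$ together with the boundedness of $a_d$ makes this family uniformly integrable, so the expectations converge, which proves \eqref{joint-exact-D-limit}.

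For the minimisers, set
\[
g(a)=[(r-a)^2+4ar\kappa_\infty]^{s/2}=[(a-r(1-2\kappa_\infty))^2+4r^2\kappa_\infty(1-\kappa_\infty)]^{s/2}.
\]
It has the unique minimiser $a_\infty=r(1-2\kappa_\infty)$ and minimal value $[4r^2\kappa_\infty(1-\kappa_\infty)]^{s/2}$, and evaluating at the three values of $\kappa_\infty$ gives the explicit cases. We first show that $(a_d^*)$ is bounded. By Jensen/Fatou, $D_{\mu_d,s}(\PP_a^{[n]})\geq \Ex\{|R_d-a|^s\}$, which is large uniformly in $d$ for large $a$ because $R_d$ is tight. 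Meanwhile $D$ at $a=0$ equals $\Ex R_d^s$, which is bounded. Now take any subsequence along which $a_d^*\to\bar a$. By \eqref{joint-exact-D-limit}, $D_{\mu_d,s}(\PP_{a_d^*}^{[n]})\to g(\bar a)$. Also $D_{\mu_d,s}(\PP_{a_d^*}^{[n]})\leq D_{\mu_d,s}(\PP_{a_\infty}^{[n]})\to g(a_\infty)$. Hence $g(\bar a)\leq g(a_\infty)$, and uniqueness of the minimiser forces $\bar a=a_\infty$. This proves \eqref{joint-exact-a-limits}. Applying \eqref{joint-exact-D-limit} to the sequence $a_d^*$ then gives \eqref{joint-exact-min-D-limits}.
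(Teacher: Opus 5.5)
Your proof is correct and has the same overall structure as the paper's: the exact representation from \eqref{d2URn}, then $\zeta(n(d),d)\to\kappa_\infty$ in probability, then domination by $(R_d+a_d)^s$ with uniform integrability, then boundedness of the minimisers and an argmin argument. Two steps are carried out differently.

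\textbf{Convergence of $\zeta(n(d),d)$.} The paper sandwiches $\zeta(n(d),d)$ between the quantiles $\kappa_{\lceil Mn\rceil,d}$ and $\kappa_{\lfloor n/M\rfloor,d}$. Multiplying $n$ by a constant does not change the regime, so Proposition~\ref{th:kappa_n} applies as a black box to both quantiles. The bounds $1/M$ and $e^{-M}$ then give the result in all three regimes at once.
\begin{itemize}
\item You instead return to \eqref{It-ineq1} and \eqref{upper_bound} and show $nI_x(\delta,\delta)\to0$ or $\to\infty$ at fixed levels $x$ on either side of $\kappa_\infty$. This redoes the exponential computation of Proposition~\ref{th:kappa_n} at fixed $x$ rather than at the quantile.
\item It is longer and needs a case split by regime, but it gives explicit exponential rates for both tails.
\item Your treatment of the upper tail when $\kappa_\infty=1/2$ is correct ($\zeta\leq\zeta_1$, and $\var\{\beta_{\delta,\delta}\}=1/(4d)$), though the sentence around it is muddled.
\item A simpler route there: for $x\geq1/2$, $I_x(\delta,\delta)\geq I_{1/2}(\delta,\delta)=1/2$, so $\Pr\{\zeta>x\}\leq 2^{-n}$.
\end{itemize}

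\textbf{Argmin step.} The paper asserts locally uniform convergence in $a$, via truncation of $R_d$ and uniform continuity, and then cites the standard argmin theorem. You use only that \eqref{joint-exact-D-limit} holds along every convergent sequence $a_d\to a$ (continuous convergence). Combined with boundedness of $(a_d^*)$ and a subsequence argument, this gives the result without the truncation step. Applying \eqref{joint-exact-D-limit} along a subsequence is legitimate: pad the sequence with the constant $\bar a$ off the subsequence. This version is cleaner.

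\textbf{Minor points.}
\begin{itemize}
\item The bound $D_{\mu_d,s}(\PP_a^{[n]})\geq\Ex\{|R_d-a|^s\}$ follows directly from $\zeta\geq0$, not from Jensen or Fatou.
\item Your appeal to tightness of $R_d$ supplies the detail behind the uniform growth in $a$, which the paper leaves implicit.
\end{itemize}
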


\begin{proof}
In each of the three regimes considered, Proposition~\ref{th:kappa_n} gives
$\kappa_{n(d),d}\to\kappa_\infty$. Consider the random variable $Z_d=\zeta(n(d),d)$ in \eqref{d2URn}. We first show that, for every $p>0$, $Z_d \to \kappa_\infty$ in $L^p$ (and thus in particular, $m_{n(d),d}=\Ex\{Z_d\}\to\kappa_\infty$).

Fix $M>1$ and define
\bea
n_d^+=\lceil M n(d)\rceil
\quad\mbox{and}\quad
n_d^-=\lfloor n(d)/M\rfloor\,.
\eea
Multiplication of $n(d)$ by a fixed positive constant does not change its asymptotic regime. Therefore,
$\kappa_{n_d^+,d}\to\kappa_\infty$ and $\kappa_{n_d^-,d}\to\kappa_\infty$.
For every $\epsilon>0$ and all sufficiently large $d$ we have
\bea
\Pr\{Z_d\leq\kappa_\infty-\epsilon\}
&\leq&
\Pr\{Z_d\leq\kappa_{n_d^+,d}\}
=1-\left(1-\frac1{n_d^+}\right)^{n(d)}
\leq\frac1M\,,\\
\Pr\{Z_d>\kappa_\infty+\epsilon\}
&\leq&
\Pr\{Z_d>\kappa_{n_d^-,d}\}
=\left(1-\frac1{n_d^-}\right)^{n(d)}
\leq\exp(-M)\,.
\eea
Letting first $d\to\infty$ and then $M\to\infty$ proves convergence in probability. Since $0\leq Z_d\leq1$, convergence holds in $L^p$ for every $p>0$, which also gives $m_{n(d),d}=\Ex\{Z_d\}\to\kappa_\infty$.

Now, consider \eqref{d2URn} again. Conditioning on $U_d$ gives the exact representation
\be\label{joint-exact-representation}
D_{\mu_d,s}(\PP_a^{[n(d)]})
=\Ex\left\{\left[(R_d-a)^2+4\,aR_dZ_d\right]^{s/2}\right\}\,,
\ee
where $Z_d$ may be taken independent of $R_d$. As $Z_d\to\kappa_\infty$ in probability, if $a_d\to a$ then
\bea
(R_d-a_d)^2+4\,a_dR_dZ_d \to (r-a)^2+4\,ar\kappa_\infty
\eea
in probability. Moreover,
\bea
0\leq
\left[(R_d-a_d)^2+4\,a_dR_dZ_d\right]^{s/2}
\leq(R_d+a_d)^s\,.
\eea
Assumption \eqref{joint-radius-assumptions} therefore gives uniform integrability, and \eqref{joint-exact-D-limit} follows.
Using truncation of $R_d$ and uniform continuity on compact sets, we obtain that the convergence in \eqref{joint-exact-D-limit} is locally uniform in $a$. The minimisers $a_d^*$ form a bounded sequence: indeed,
\bea
D_{\mu_d,s}(\PP_a^{[n(d)]})
\geq\Ex\{|R_d-a|^s\}\,,
\eea
whereas the choice $a=0$ gives $D_{\mu_d,s}(\PP_0^{[n(d)]})=\Ex\{R_d^s\}$, which is uniformly bounded. The standard argmin argument can therefore be applied to the limiting function
\bea
L(a)=\left[(r-a)^2+4\,ar\kappa_\infty\right]^{s/2}\,.
\eea
Since $s>0$, its unique minimiser is $a=r(1-2\kappa_\infty)$, and its minimum is $[4\,r^2\kappa_\infty(1-\kappa_\infty)]^{s/2}$. Substitution of the three values in \eqref{joint-kappa-limits} gives \eqref{joint-exact-a-limits} and \eqref{joint-exact-min-D-limits}.
\end{proof}

Under the assumptions of Theorem~\ref{Th:joint-exact-distortion-limit}, the extreme-value
approximation $\widehat D_{\mu_d,s}(\PP_a^{[n(d)]})$ of Definition~\ref{D:main-extreme-value} also satisfies $\widehat D_{\mu_d,s}(\PP_a^{[n(d)]})\to
\left[(r-a)^2+4\,ar\kappa_\infty\right]^{s/2}$ as $d\to\infty$ and $\widehat{a^*}(d,n(d),s)$ satisfies \eqref{joint-exact-a-limits}: compared with the proof of Theorem~\ref{Th:joint-exact-distortion-limit}, we now consider the random variable $T_{n,d}=(R_d-a)^2 + 4\,a\,R_d\,\kappa_{n,d}\,\zeta$ in \eqref{Ds-approx}, where again $\kappa_{n,d}\to\kappa_\infty$.

The assumptions \eqref{joint-radius-assumptions} on $\mu$ are satisfied when $\mu$ possesses a {\em norm-concentration property}, i.e., when it satisfies
\bea
\mbox{for all } \me>0\,, \quad \mu\left\{\left| \|U\|-r \right| \geq \me\right\} \leq 2\,\exp(-c\, d\, \me^2)
\eea
when $U\sim\mu$, for some constant $c$ not depending on $d$.
This is the case in particular for $\mu$  uniform in the unit ball $\SB_d(1)$ as in Section~\ref{S:NSC-examples-ball} and for $\mu$ normal as in Section~\ref{S:NSC-examples-normal}. The numerical evolution of $n_1(d,s)$ in Figure~\ref{F:asymptotics-ball_s2} suggests that, in these two situations and for sufficiently small $\ml>1$, quantisers with distribution $\PP_{\sqrt{1-1/\ml^2}}$ may be asymptotically optimal among all random quantisers when $n^{1/d}\to\ml$. This conclusion is empirical and is not established by Theorem~\ref{Th:joint-exact-distortion-limit}, which optimises only over single-sphere distributions.

\vsp
When $\mu$ is uniform on $\SS_{d-1}(1)$, the same arguments as those leading to Definition~\ref{D:main-extreme-value} indicate that, for any $\Rb_n\simd \PP_a^{[n]}$, the extreme-value approximation of the $\mg$-quantile of the distance c.d.f.\ $F(t; \Rb_n,\mu)$ is $\widehat q_\mg = \sqrt{(1-a)^2+4\,a\,\kappa_{n,d}\,t_\mg}$, with $t_\mg=[-\log(1-\mg)]^{1/\delta}$ the $\mg$-quantile of the Weibull distribution with c.d.f.\ $F_d(t)=1-\exp(-t^\delta)$, $\delta=(d-1)/2$.

The minimum of $\widehat q_\mg$ with respect to $a\geq0$ is obtained for $\widehat {a_*}(d,n,\mg)=[1-2\,\kappa_{n,d}\,t_\mg]_+$. Since $t_\mg\to 1$ as $d\to\infty$, for any fixed $\mg$, $\widehat {a_*}(d,n(d),\mg)$ has the same limits as $\widehat {a^*}(d,n(d),s)$, independently of $\mg$, when $n = n(d) \to \infty$ according to one of the three regimes in \eqref{joint-kappa-limits}.

\begin{remark}\label{R:quantiles} Note that $\widehat q_\mg = \widehat E_{\mu,2}(n;a)$ for any $a$ when $\mg=\mg(d,2)=F_d[\Gamma(1+1/\delta)]$, showing that for large $n$, minimising the expected $(\mu,2)$-distortion is equivalent to minimising the $\mg$-quantile of the mean distance c.d.f.\ for
\bea
\mg=\mg(d,2)=1- \e1^{-\e1^{-\mgbb}} + \frac{\pi^2}{6\,d}\, \e1^{-\mgbb-\e1^{-\mgbb}} +\SO(d^{-2})  \,, \ d\to\infty\,,
\eea
where $\mgbb$ is Euler's constant and $1- \e1^{-\e1^{-\mgbb}}\simeq 0.429624$. This suggests that $L_2$-quantisation is roughly equivalent to minimising the median of the distance c.d.f., a phenomenon that can also be observed in other situations.
\fin
\end{remark}

\subsection{Example: $\mu$ uniform on the unit sphere $\SS_{d-1}(1)$} \label{S:extreme-value-sphere}

When $U\simd \mu$ uniform on $\SS_{d-1}(1)$, for any $\Rb_n\simd \PP_a^{[n]}$ the approximations \eqref{E2^2-general} and \eqref{E4^2-general}
become
\bea
\widehat E_{\mu,2}(n;a) \!\!\! &=& \!\!\! \left[ (1-a)^2 + 4\,a\,\kappa_{n,d}\,\Gamma(1+1/\delta) \right]^{1/2} \,, \\
\widehat E_{\mu,4}(n;a) \!\!\! &=& \!\!\! \left[ (1-a)^4 + 16\,a^2\,\kappa_{n,d}^2\,\Gamma(1+2/\delta) + 8\,a(1-a)^2\,\kappa_{n,d}\,\Gamma(1+1/\delta) \right]^{1/4} \,.
\eea
For $s=2$, Corollary~\ref{Coro:astar-s2-Pa} gives $\widehat {a^*}=\widehat {a^*}(d,n,2) = 1 - 2\,\kappa_{n,d}\,\Gamma(1+1/\delta)$ and $\widehat E_{\mu,2}^2(n;\widehat {a^*}) =1 - (\widehat {a^*})^2$.
The right panel of Figure~\ref{F:sphere-approx0} plots $\widehat {a^*}(d,n,2)-a^*(d,n,2)$ as a function of $d$ for different values of $n$: $\widehat {a^*}-a^* >0$ and the accuracy of the approximation of $a^*$ by $\widehat {a^*}$ tends to decrease with $d$ and increase with $n$.

\begin{figure}[ht!]
\begin{center}
\includegraphics[width=.49\linewidth]{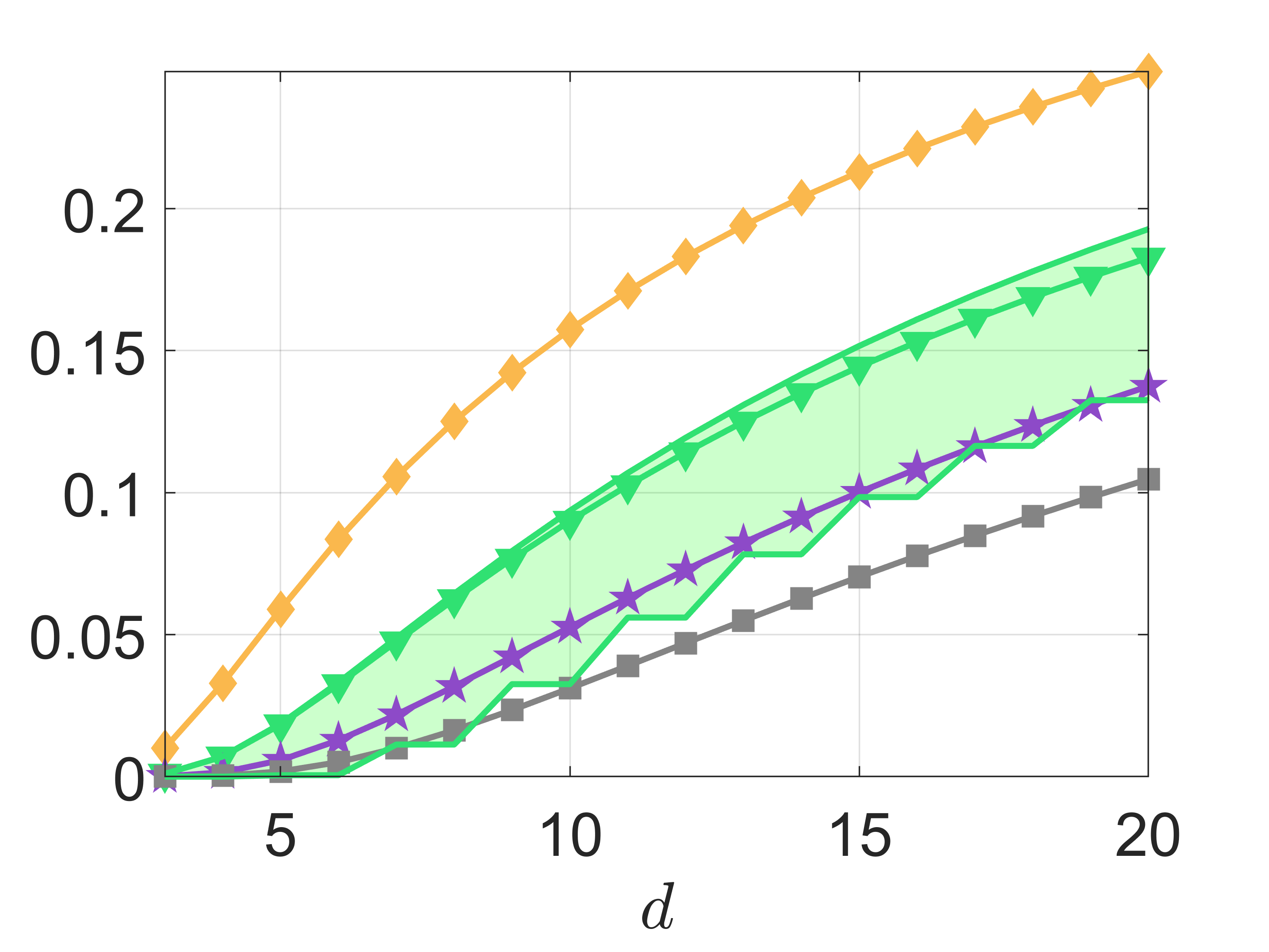}
\includegraphics[width=.49\linewidth]{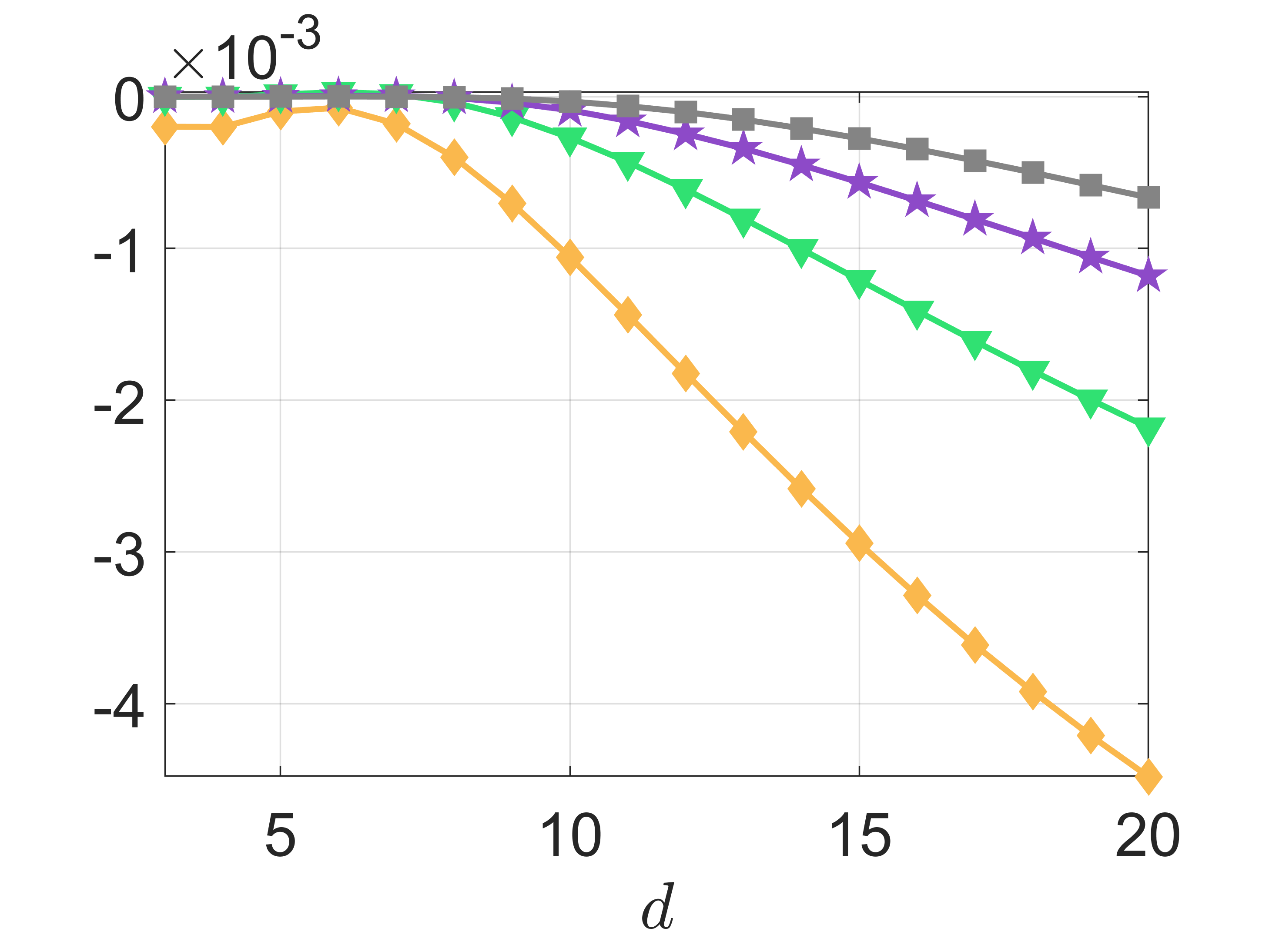}
\end{center}
\caption{\small Values of $\kappa_{n,d}$ (left) and of $\widehat {a^*}(d,n,2)-a^*(d,n,2)$ (right) as functions of $d$ for different values of $n$: $n=10^2$ ({\color{Dandelion} $\blacklozenge$}), $n=10^3$ ({\color{green} $\blacktriangledown$}), $n=10^4$ ({\color{Orchid} $\bigstar$}) and $n=10^5$ ({\color{gray} {\tiny $\blacksquare$}}).
}
\label{F:sphere-approx0}
\end{figure}

The left and right panels of Figure~\ref{F:sphere-approx} show that the two approximations $\widehat D_{\mu,2}(\PP^{[n]}_a)$ and $\widehat D_{\mu,4}(\PP^{[n]}_a)$ are quite accurate when $n$ is large enough for extreme value theory to be applicable. For large $d$, convergence to the extreme-value distribution is slow and $\widehat D_{\mu,s}^{1/s}(\PP^{[n]}_a)$ slightly overestimates the true value $D_{\mu,s}^{1/s}(\PP^{[n]}_a)$.

The left panel of Figure~\ref{F:sphere-approx-2} provides another illustration of the approximation accuracy. It presents normalised values $n^{1/d}\,\widehat D_{\mu,s}^{1/s}(\PP^{[n]}_a)$ and $n^{1/d}\,D_{\mu,s}^{1/s}(\PP^{[n]}_a)$ as functions of $n=2^m$, $m=6,7,\ldots,20$, for $s=4$ and $d=20$, both for fixed $a=0.75$ and for the optimal value $a^*(d,n,s)$ (the plots with $\widehat {a^*}(d,n,s)$ substituted for $a^*(d,n,s)$ are visually indistinguishable).
In the right panel, 100 numerically computed\footnote{We used $2^{12}$ i.i.d.\ $U_i\simd \mu$, which provides an accurate enough estimation of $E_{\mu,s}(\Rb_n)$; see Remark~\ref{R:distortion-evaluation}.} values of $n^{1/d}\,E_{\mu,s}(\Rb_n)$ are shown for each $n$, using independent random quantisers $\Rb_n\simd \PP^{[n]}_{a^*}$. The simulations agree closely with the exact mean $n^{1/d}\,D_{\mu,s}^{1/s}(\PP^{[n]}_{a^*})$ and show very little variability across quantisers, in agreement with Theorem~\ref{Th:distortion-concentration}.

\begin{figure}[ht!]
\begin{center}
\includegraphics[width=.49\linewidth]{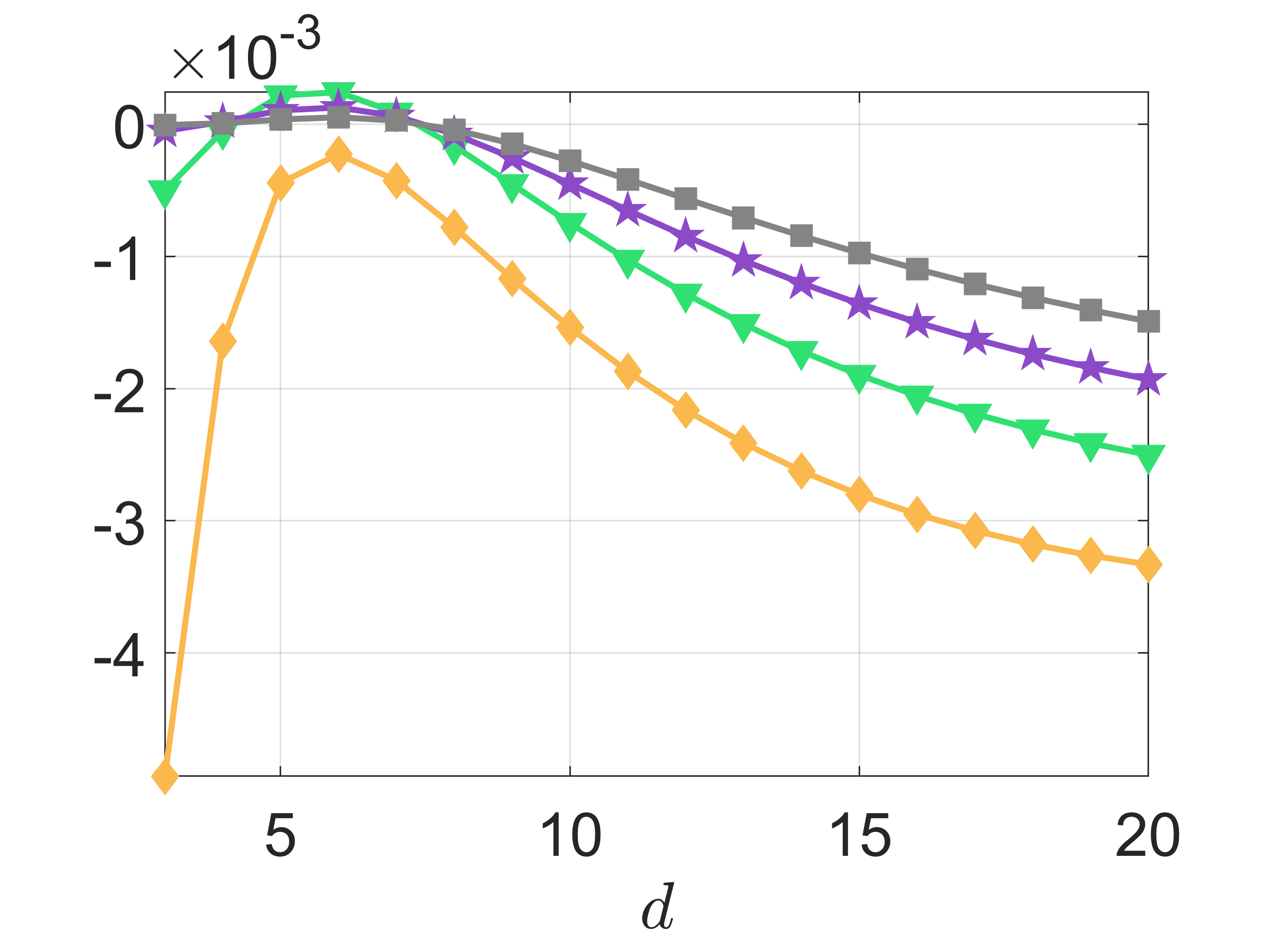}
\includegraphics[width=.49\linewidth]{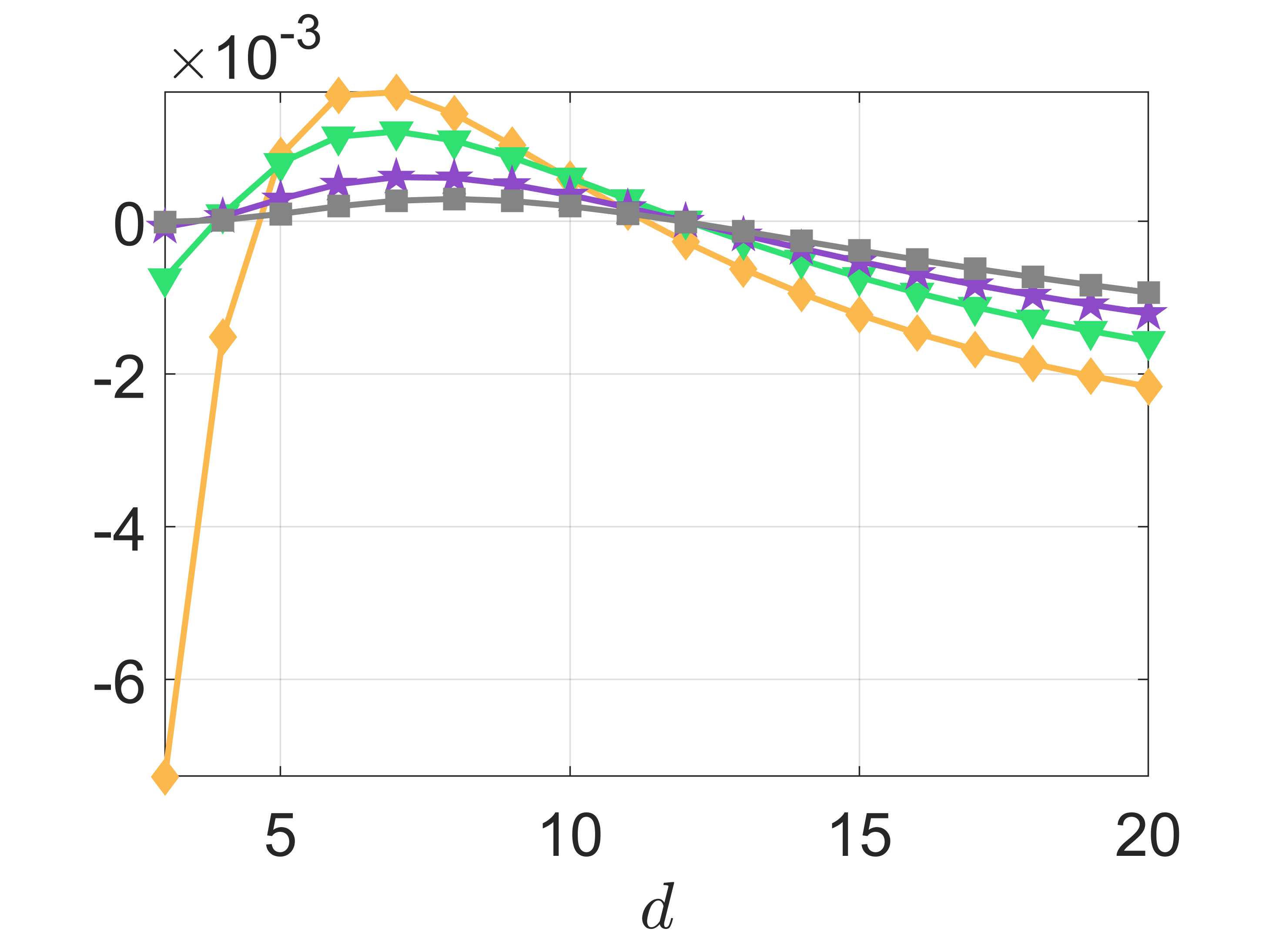} \\
\end{center}
\caption{\small Relative error $1-\widehat D_{\mu,s}^{1/s}(\PP^{[n]}_a)/D_{\mu,s}^{1/s}(\PP^{[n]}_a)$ of the approximation of $D_{\mu,s}^{1/s}(\PP^{[n]}_a)$ (left for $s=2$ and right for $s=4$) based on extreme-value theory, as a function of $d$ for different values of $n$: $n=10^2$ ({\color{Dandelion} $\blacklozenge$}), $n=10^3$ ({\color{green} $\blacktriangledown$}), $n=10^4$ ({\color{Orchid} $\bigstar$}) and $n=10^5$ ({\color{gray} {\tiny $\blacksquare$}}); $a$ is the optimal value $a^*(d,n,s)$.
}
\label{F:sphere-approx}
\end{figure}

\begin{figure}[ht!]
\begin{center}
\includegraphics[width=.49\linewidth]{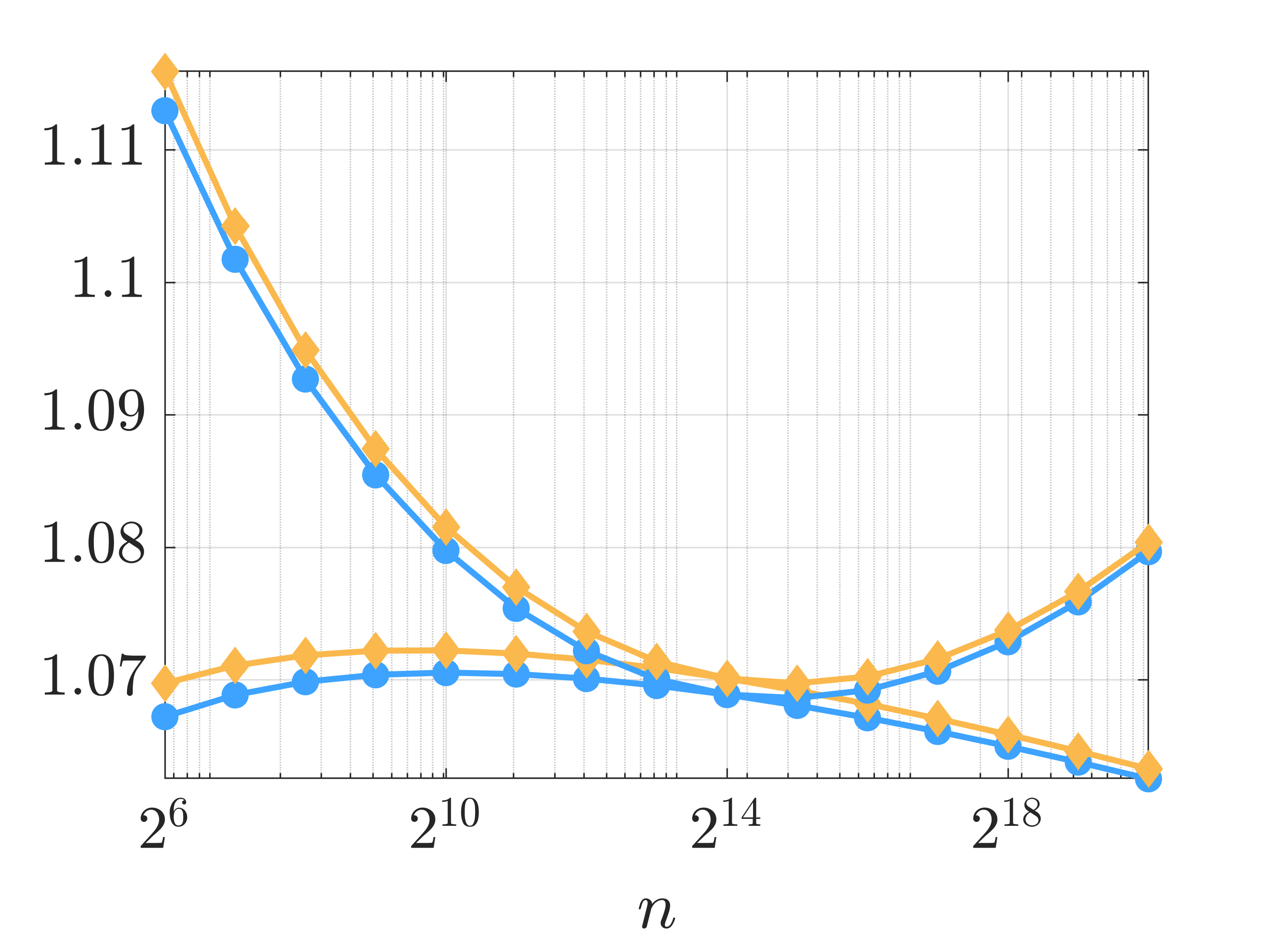}
\includegraphics[width=.49\linewidth]{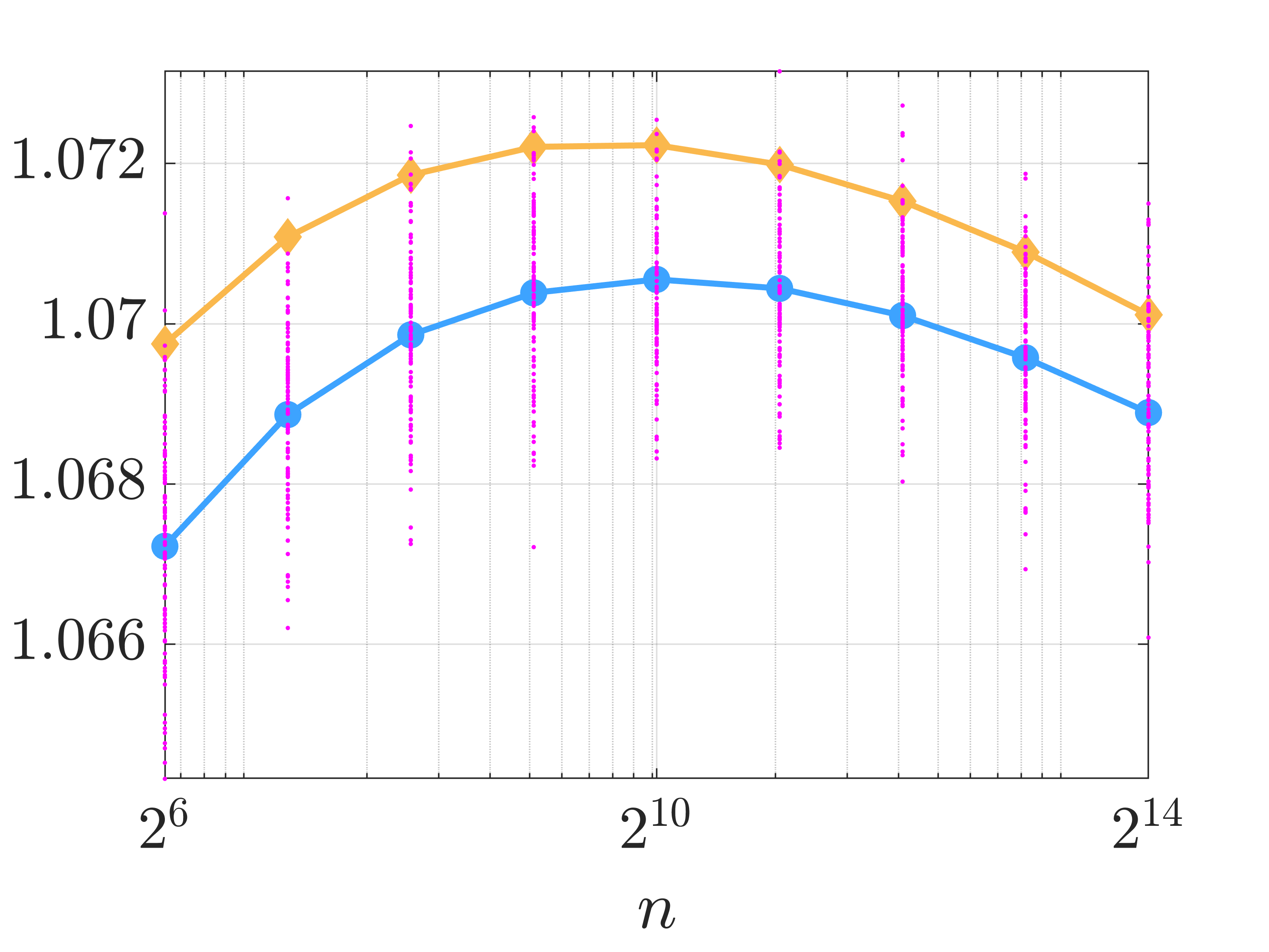}
\end{center}
\caption{\small $n^{1/d}\,\widehat D_{\mu,s}^{1/s}(\PP^{[n]}_a)$ ({\color{Dandelion} $\blacklozenge$}) and $n^{1/d}\,D_{\mu,s}^{1/s}(\PP^{[n]}_a)$ ({\color{Cerulean} $\bullet$}) as functions of $n$ for $s=4$ and $d=20$. Left: $a=0.75$ for top curves and $a=a^*(d,n,s)$ for bottom curves. Right: $a=a^*(d,n,s)$, the values of $n^{1/d}\,E_{\mu,s}(\Rb_n)$ computed numerically for 100 random quantisers $\Rb_n\simd \PP^{[n]}_{a^*}$ are shown as magenta dots.
}
\label{F:sphere-approx-2}
\end{figure}

\section{Conclusions}~\label{S:conclusion}

We have demonstrated that, for a spherically symmetric measure $\mu$ in $\RR^d$, unless the sample size $n$ is extremely large, a random quantiser uniformly distributed on a sphere of suitable radius can significantly outperform a random quantiser whose distribution follows the asymptotic limit predicted by Zador’s theorem. The optimal radius can be determined numerically by minimising a triple integral, which can be evaluated with arbitrary precision. Additionally, an approximation of the optimal radius is available, derived from extreme-value theory. Since the variability of the distortion for such random quantisers rapidly diminishes as $n$ increases, this approach is highly practical for constructing high-performance quantisers.

While the restriction to spherically symmetric distributions may appear limiting, it provides a robust foundation for broader applications. For instance, quantising the uniform measure on the $d$-dimensional hypercube $[-1,1]^d$ presents a compelling and challenging problem, particularly in space-filling design for computer experiments (see, e.g., \cite{PronMul2012}).  While greedy quantisation offers an attractive approach for the incremental construction of designs in low dimensions \cite{LuschgyP2015,NogalesPR2021}, it becomes computationally infeasible for large $d$. In such cases, recent results from \cite{noonanhigh2024} allow the uniform measure to be approximated by a spherically symmetric distribution. Similarly, one can consider random quantisers distributed according to a product measure, which can itself be approximated by a spherically symmetric distribution. Preliminary findings then suggest that quantisers distributed on the vertices of a smaller hypercube exhibit promising performance, paving the way for further advances in this direction.

\bibliographystyle{plain}
\bibliography{high-d} 

\clearpage

\subfile{supp-High-d-spherical-arXiv}

\end{document}

%% file: supp-High-d-spherical-arXiv.tex





\setcounter{section}{0}
\setcounter{subsection}{0}
\setcounter{subsubsection}{0}
\setcounter{equation}{0}
\setcounter{figure}{0}


\section*{Supplementary Material for the paper {\em Optimal random quantisers for spherically symmetric distributions}\\
Luc Pronzato\footnote{(corresponding author) Laboratoire I3S, CNRS-Universit\'e C\^ote d'Azur, Sophia Antipolis, France, {\tt pronzato@i3s.unice.fr}} \ and Anatoly Zhigljavsky\footnote{School of Mathematics, Cardiff University, UK, {\tt ZhigljavskyAA@cardiff.ac.uk}}}

\begin{abstract}
This document contains additional material for the parent paper {\em Optimal random quantisers for spherically symmetric distributions}.
We consider the three cases (\textit{i}) $\mu$ is uniform on the sphere $\SS_{d-1}(1)$, (\textit{ii}) $\mu$ is uniform in the ball $\SB_d(1)$, and (\textit{iii}) $\mu$ is spherically normal, for which we provide numerical examples that complement those in the parent paper and illustrate some of the results given there. In case (\textit{i}) we consider random quantisers whose distribution is a mixture of the uniform distribution on a sphere $\SS_{d-1}(a)$ and the delta measure at $\0b_d$. We also compare the quantisation performance of full-factorial $2^d$ designs with that of random quantisers of the same size that are uniformly distributed on a sphere.
In case (\textit{ii}), we compare the performance of random quantisers uniformly distributed, respectively, on a sphere and in a ball, both with optimised radii. In case (\textit{iii}), we compare the performance of random quantisers uniformly distributed on spheres whose radii are derived from asymptotic considerations when $n=n(d)$ grows exponentially with $d$.

The notation follows that of the parent paper. Equation numbers of the form (xx*) refer to equation (xx) in the parent paper. Similarly, references such as xx* are used for figures, definitions, theorems, and other numbered elements from the parent paper.
\end{abstract}



\section{$\mu$ is uniform on the sphere $\SS_{d-1}(1)$} \label{S:sphere}

Here $\mu$ is the spherical measure (uniform on $\SS_{d-1}(1)$), and the distribution of $\|U\|$ for $U\simd\mu$ is the delta measure at $r=1$; see Section~\ref{S:NSC-examples-sphere}*.

\subsection{$\PP=\PP_a$ uniform on $\SS_{d-1}(a)$}\label{S:uniform-on-S}

We first consider random quantisers with distribution $\PP=\PP_a$ uniform on $\SS_{d-1}(a)$  and minimise the expected $(\mu,s)$-distortion $D_{\mu,s}(\PP^{[n]}_a)$, given by (\ref{Ds-sphere}*), with respect to $a\in[0,1]$. The value $a^*=a^*(d,n,s)$ that minimises $D_{\mu,s}(\PP^{[n]}_a)$ with respect to $a$ can be obtained numerically with arbitrary precision for any given $d,n$ and $s$.
The left column of Figure~\ref{F:sphere-supp} presents $a^*(d,n,s)$ as a function of $d$ for different $n$ and $s=1$ (top) and $s=10$ (bottom). For fixed $s$ and $d$, $a^*$ increases with $n$, and $a^*$ tends to 1 when $n$ tends to infinity. For any fixed $d$ and $n$, $a^*(d,n,s)$ is a decreasing function of $s$, with $a^*(d,n,\infty)=0$ if $n$ is small enough.

The central column of Figure~\ref{F:sphere-supp} shows that, as expected, $D_{\mu,s}^{1/s}(\PP^{[n]}_{a^*})$ decreases with $n$ and increases with $d$ and $s$. The right column presents the efficiency of the naive quantiser with $n$ points i.i.d.\ on $\SS_{d-1}(1)$ relative to optimised random quantisers with $n$ points on $\SS_{d-1}(a^*)$. It shows that the asymptotic optimum $a=1$ can be a poor choice for large $d$: indeed, the benefit of using an appropriate $a$ is significant when $d$ is large or $n$ is small.

\begin{figure}[ht!]
\begin{center}
\includegraphics[width=.32\linewidth]{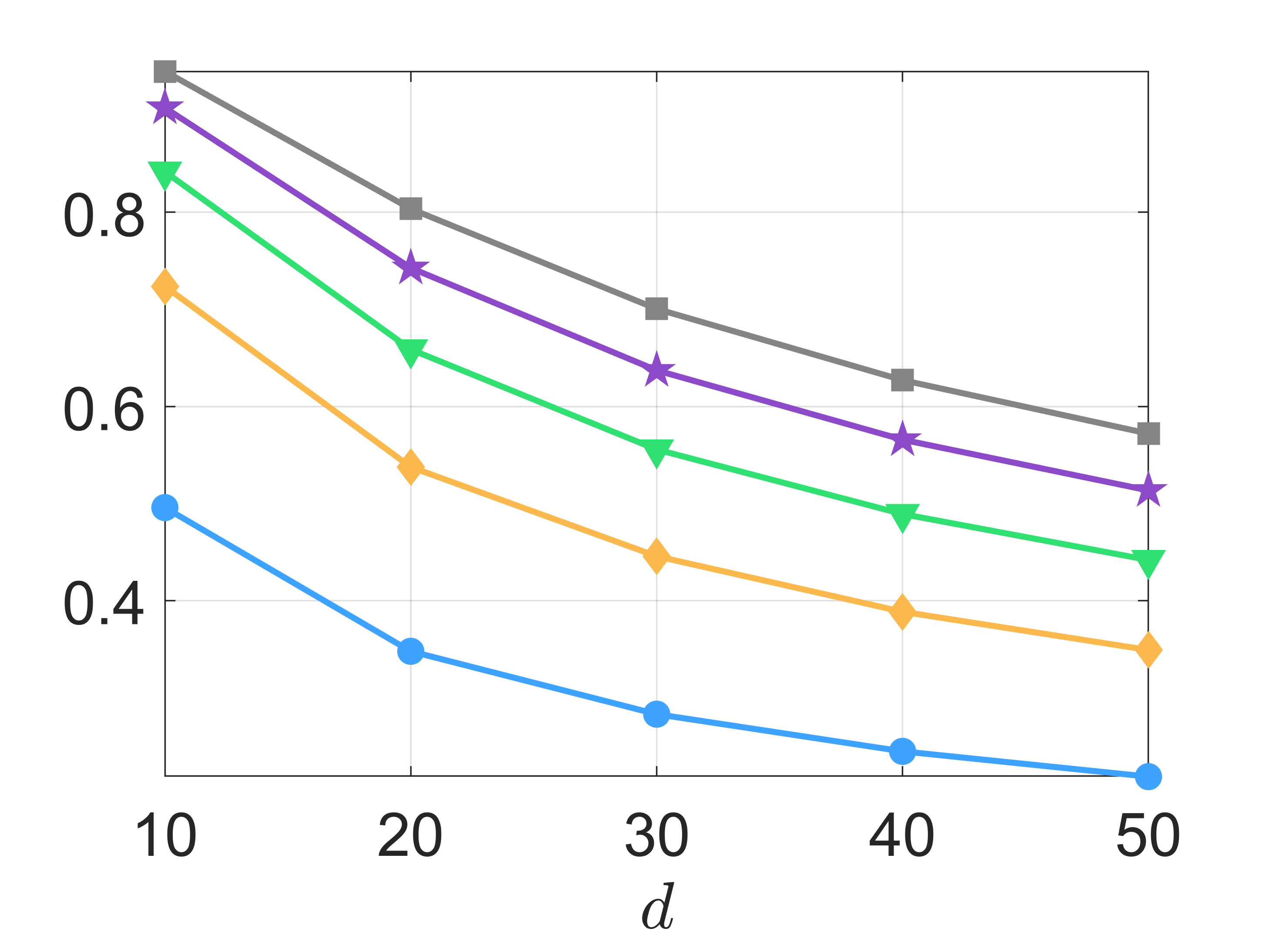}
\includegraphics[width=.32\linewidth]{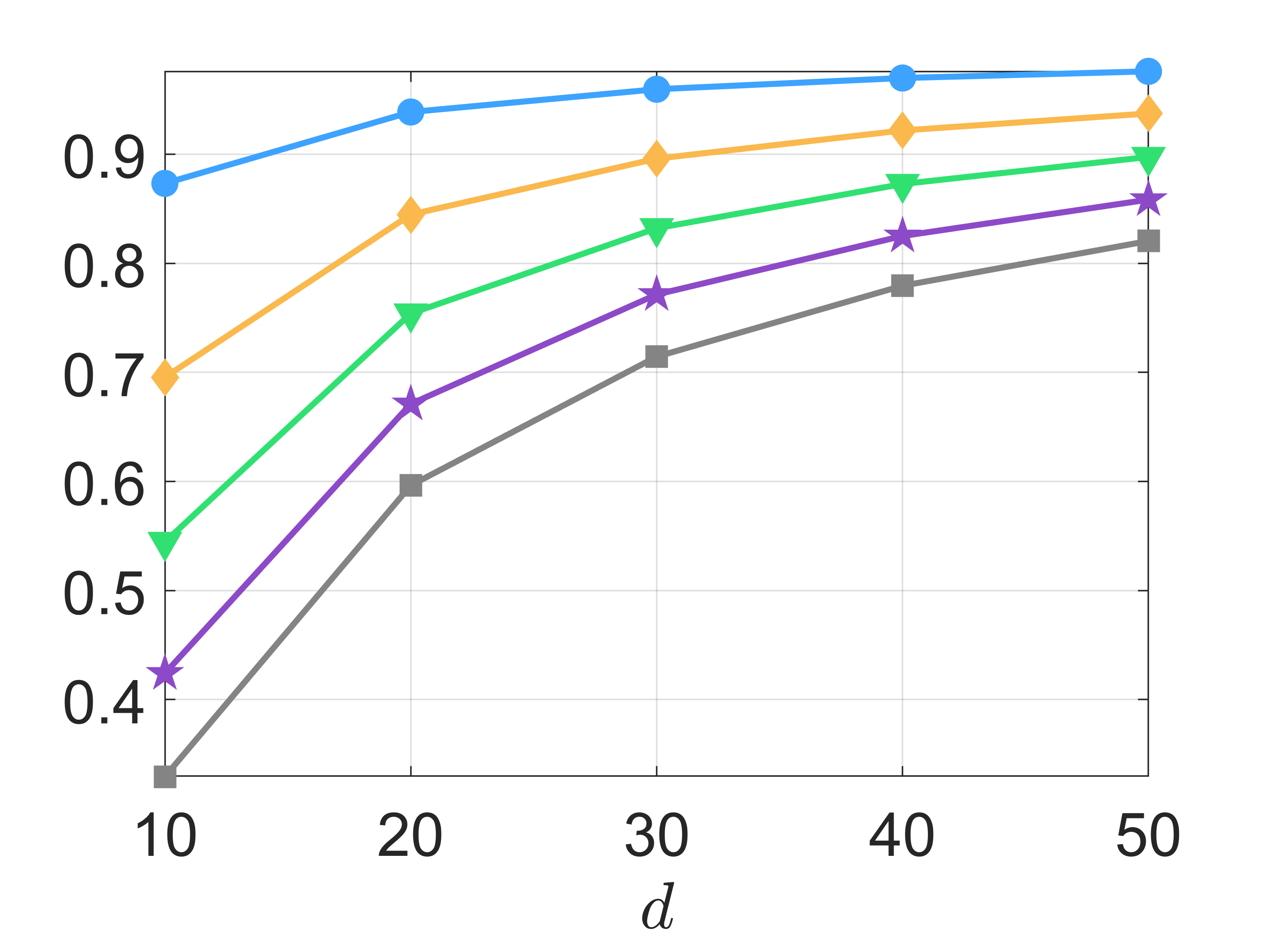}
\includegraphics[width=.32\linewidth]{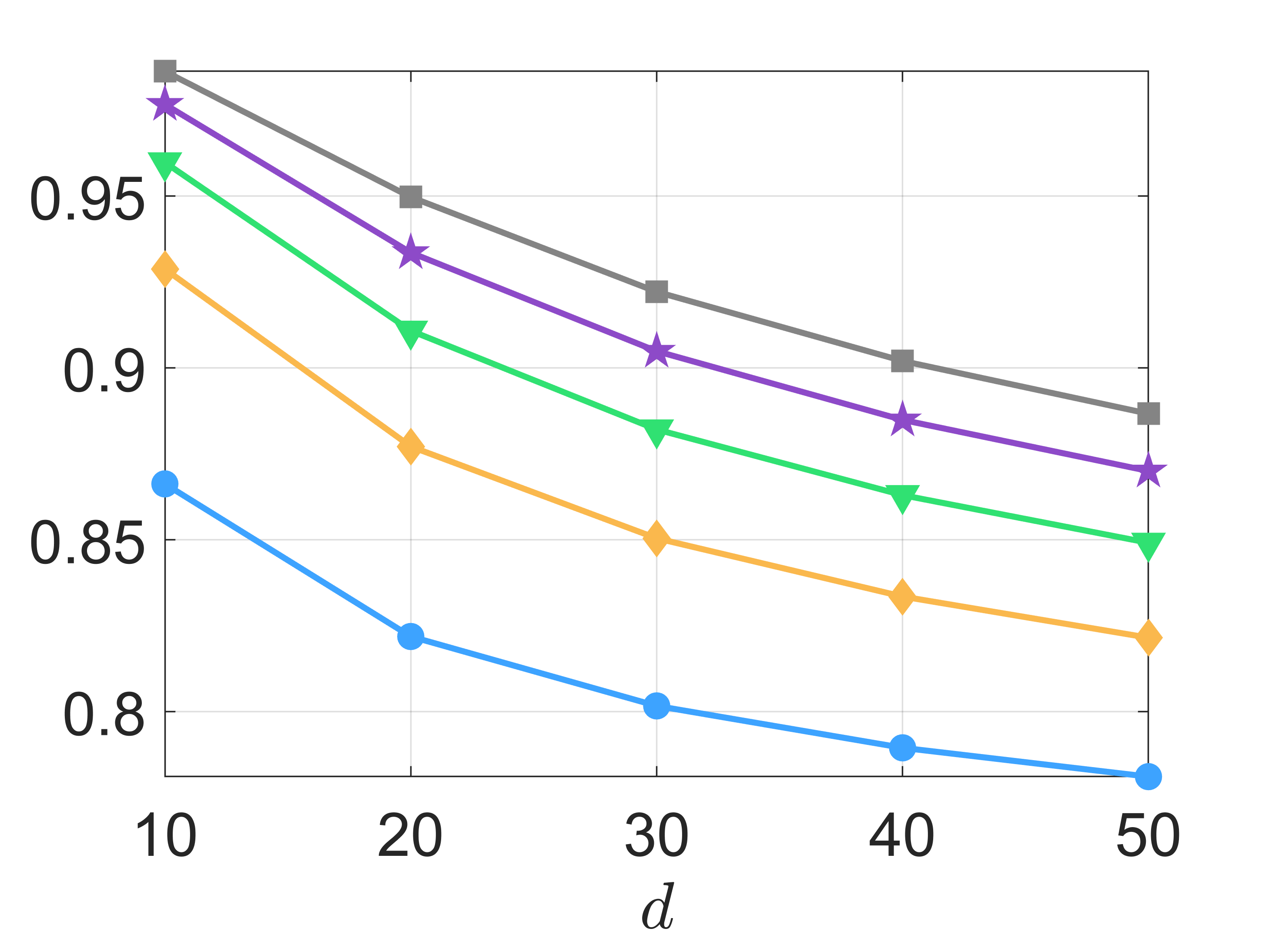} \\
\includegraphics[width=.32\linewidth]{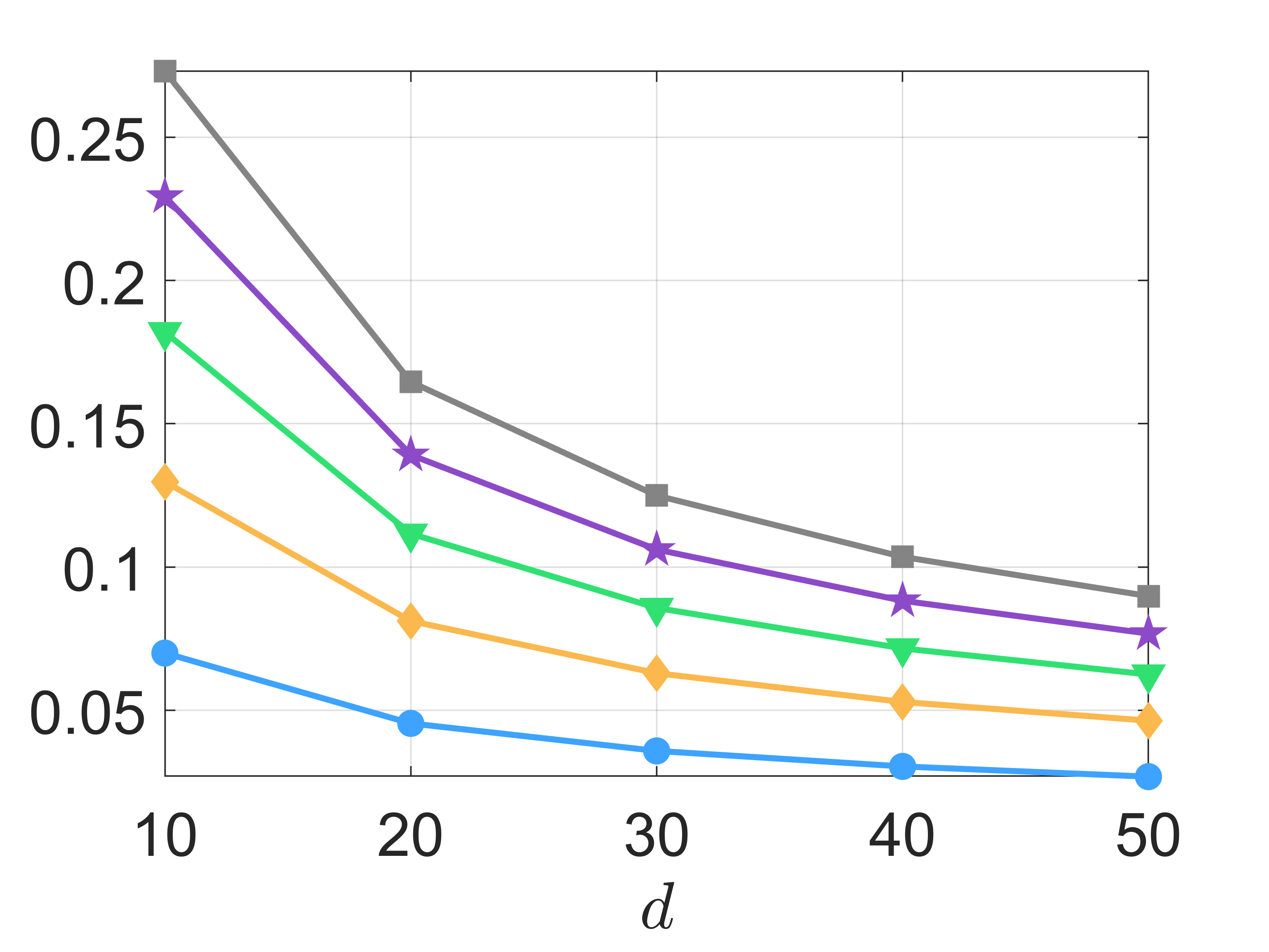}
\includegraphics[width=.32\linewidth]{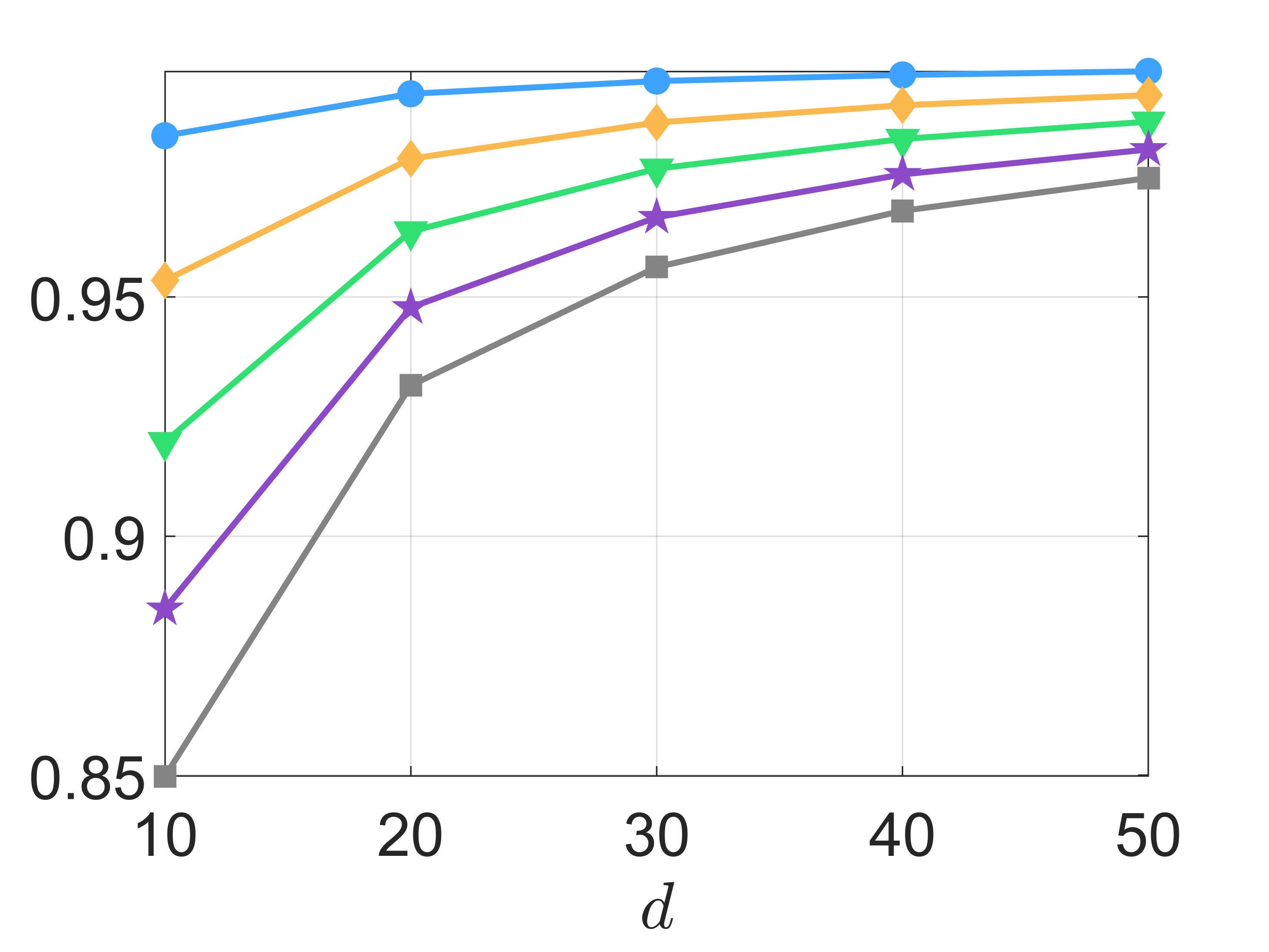}
\includegraphics[width=.32\linewidth]{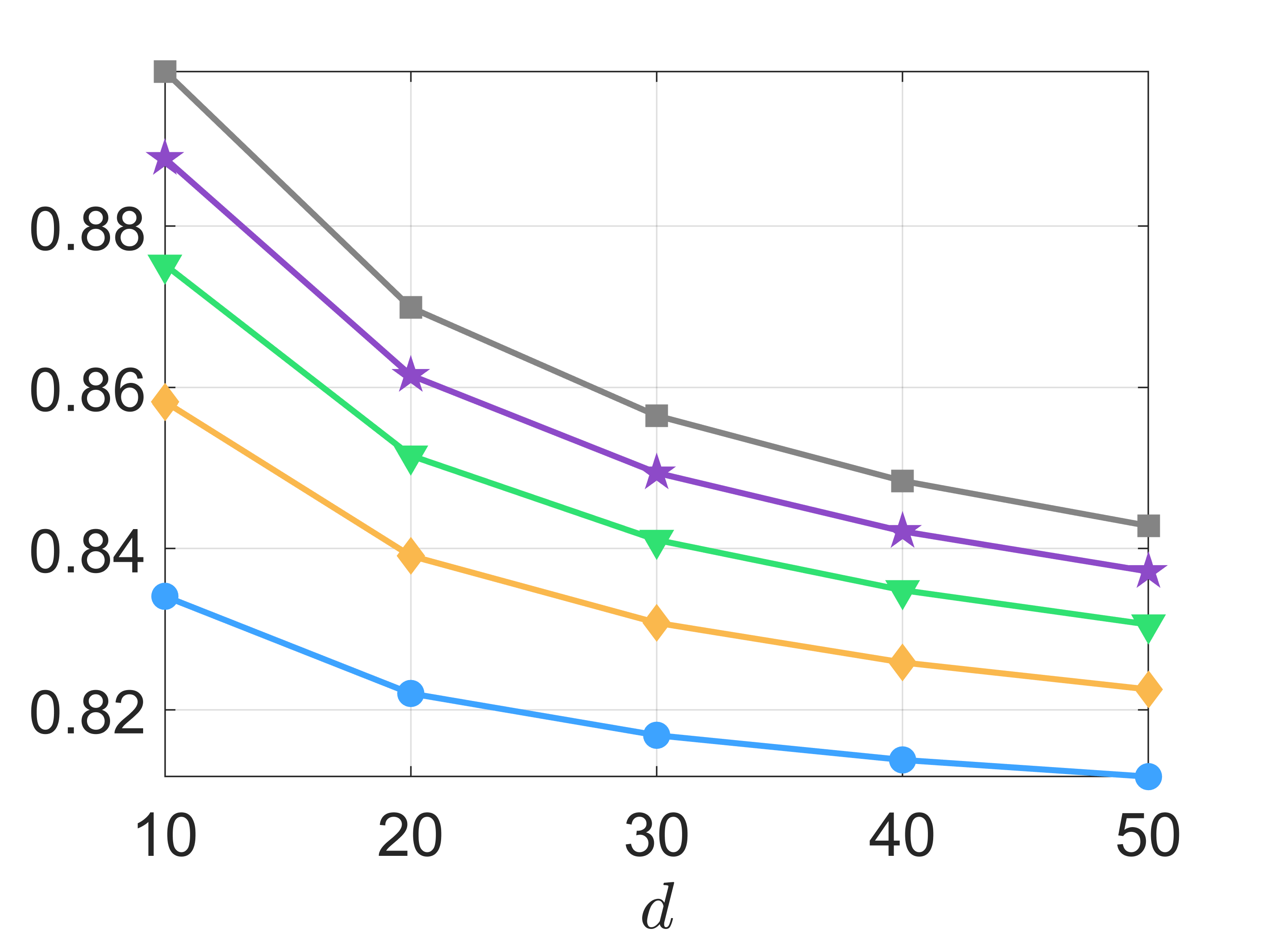}
\end{center}
\caption{\small Sphere. Value $a^*$ minimising $D_{\mu,s}(\PP^{[n]}_a)$ (left column), $D_{\mu,s}^{1/s}(\PP^{[n]}_{a^*})$ (central column) and ratio $D_{\mu,s}^{1/s}(\PP^{[n]}_{a^*})/D_{\mu,s}^{1/s}(\PP^{[n]}_1)$ (right column) as functions of $d$ for $s=1$ (top row) and $s=10$ (bottom row) and different values of $n$: $n=10$ ({\color{Cerulean} $\bullet$}), $n=10^2$ ({\color{Dandelion} $\blacklozenge$}), $n=10^3$ ({\color{green} $\blacktriangledown$}), $n=10^4$ ({\color{Orchid} $\bigstar$}), and $n=10^5$ ({\color{gray} {\tiny $\blacksquare$}}).}
\label{F:sphere-supp}
\end{figure}

\subsection{Mixture of $\PP=\PP_a$ uniform on $\SS_{d-1}(a)$ and $\delta_{\0b_d}$}

The next figure (Figure~\ref{F:sphere2}, where $s=10$, $d=10$) shows that adding a delta measure at $\0b_d$ to $\PP_1$ may also yield a decrease of the expected distortion for small~$n$. Denote $\QQ_{\ma,a}=\ma\,\PP_a+(1-\ma)\,\delta_{\0b_d}$. Similarly to (\ref{Ds-sphere}*), we obtain
\bea
D_{\mu,s}(\QQ^{[n]}_{\ma,a})  &=&  (1-a)^s + s \int_{1-a}^1 t^{s-1} \left[1-\ma\, I_\upsilon(\delta,\delta)\right]^n\,\dd t \\
&& + s\,\ma^n \int_1^{1+a} t^{s-1} \left[1- I_\upsilon(\delta,\delta)\right]^n\,\dd t \,,
\eea
where $\upsilon=[t^2 -(1-a)^2]/(4 a)$, $\delta=(d-1)/2$ and $I_t(\cdot,\cdot)$ is the regularised incomplete beta function with the convention (\ref{convention}*).
The left panel presents the optimal $\ma^*$ minimising $D_{\mu,s}(\QQ^{[n]}_{\ma,1})$ as a function of $n$: unsurprisingly, $\ma^*$ quickly approaches 1 as $n$ increases. One may observe that, for all $n$, $(1-\ma^*)>1/n$.
Next, for a given $n$ ($n=20$), we use the corresponding optimal $\ma^*$ minimising $D_{\mu,s}(\QQ^{[n]}_{\ma,1})$ ($\ma^*\simeq 0.846$) and plot $D_{\mu,s}^{1/s}(\QQ^{[n]}_{\ma^*,a})$ and $D_{\mu,s}^{1/s}(\QQ^{[n]}_{1,a})=D_{\mu,s}^{1/s}(\PP^{[n]}_a)$ as functions of $a$ (central panel): the effect of introducing a delta measure at $\0b_d$ is significant for $a$ close to one, but choosing a suitable radius $a$ has a larger impact. Finally, we consider the efficiencies of random quantisers with $\QQ_{\ma^*,1}$ with respect to quantisers distributed with $\PP_1$ and
$\PP_{a^*}$ (with $\ma^*$ and $a^*$ depending on $n$) as functions of $n$ (right panel): when $a$ is set to 1, the choice of a suitable $\ma$ has some positive effect for small $n$, but the impact of choosing a suitable $a$ is stronger for all $n$. In Section~\ref{S:NSC-examples-sphere}*, we show numerically that for $n$ larger than some $n_*(d,s)$, $\PP_{a^*}$ uniform on a single sphere is optimal among all spherically symmetric distributions (see Figure~\ref{F:nmin_sphere}*).

\begin{figure}[ht!]
\begin{center}
\includegraphics[width=.32\linewidth]{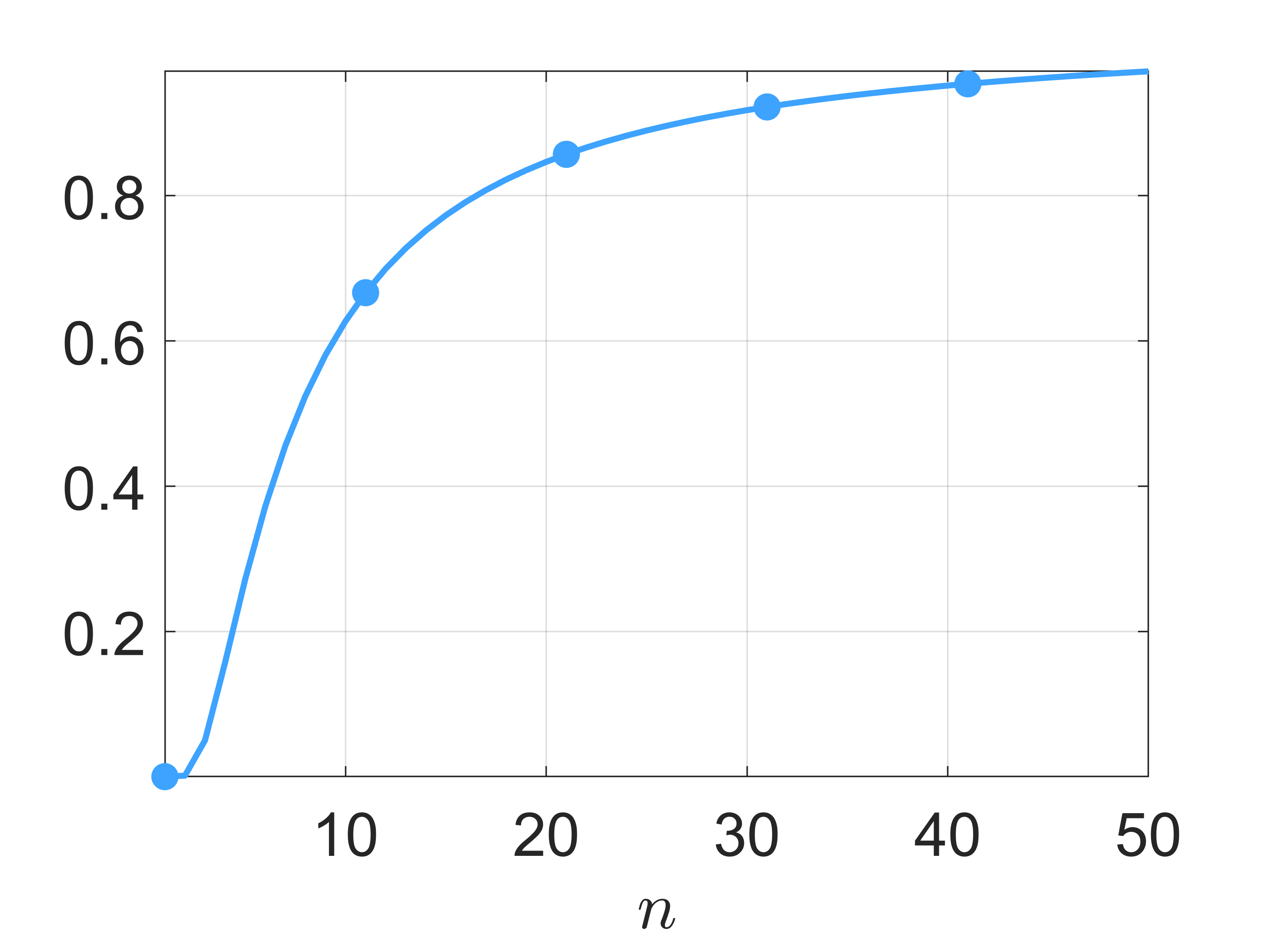}
\includegraphics[width=.32\linewidth]{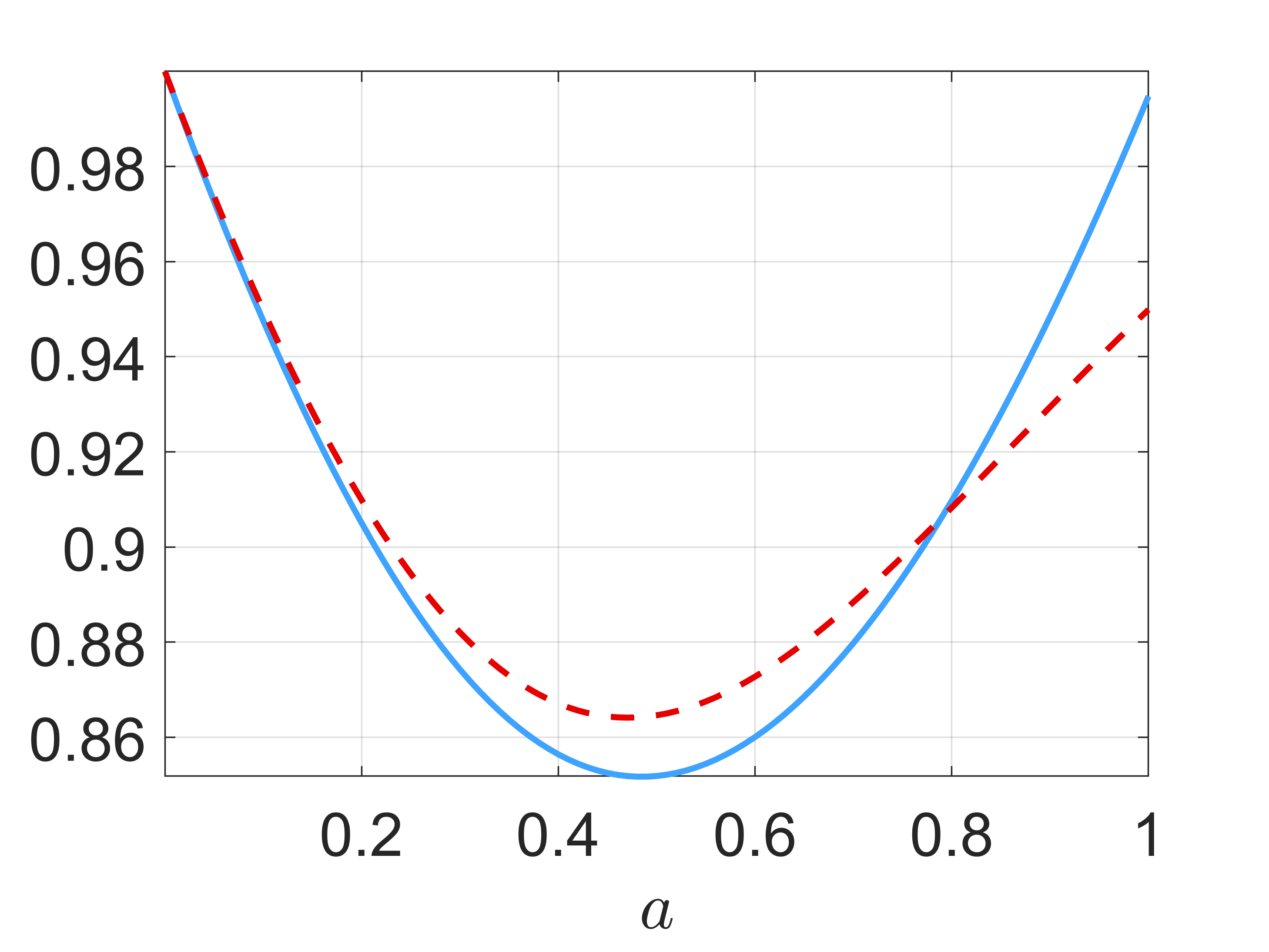}
\includegraphics[width=.32\linewidth]{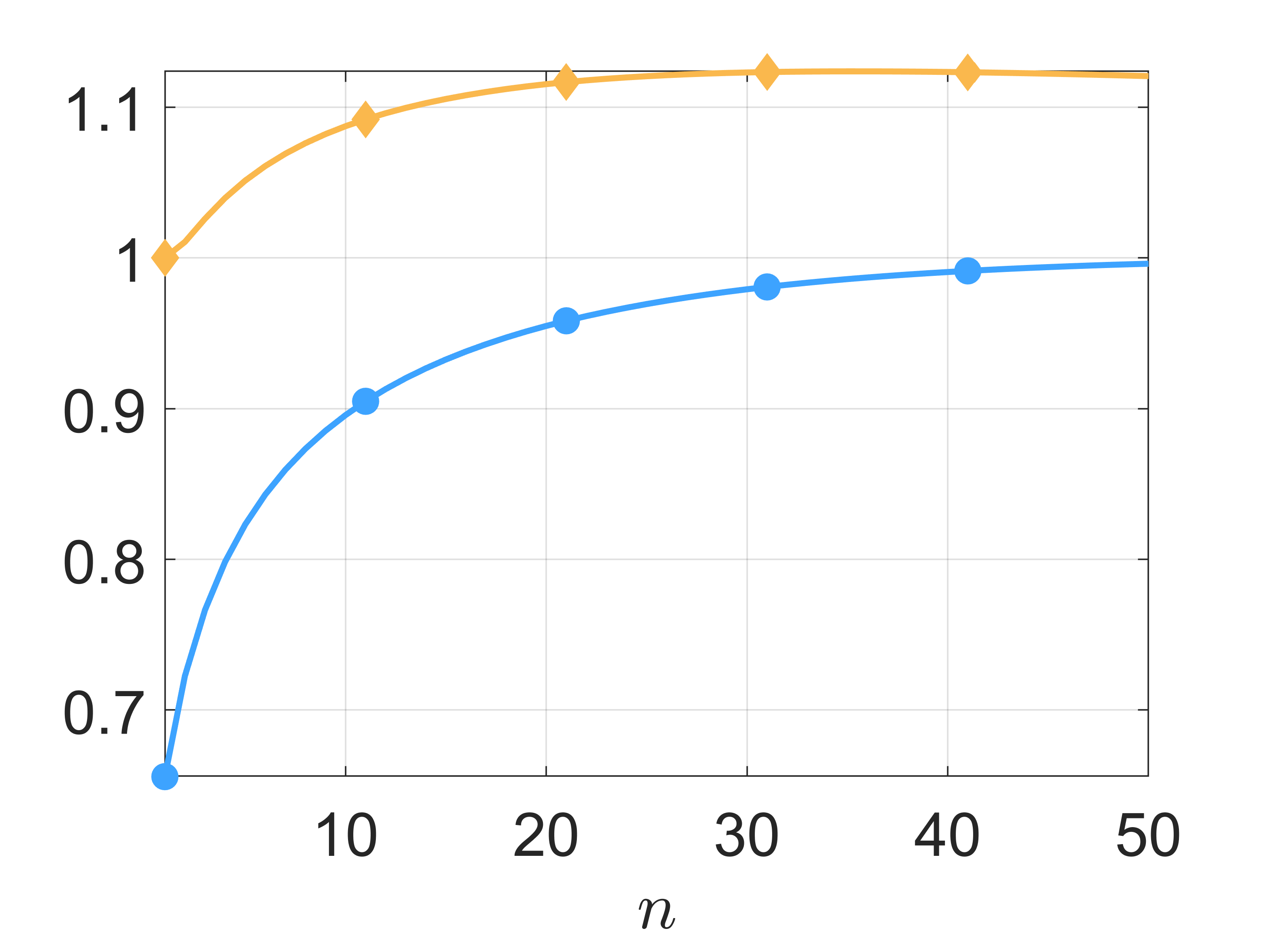} \\
\end{center}
\caption{\small Left: value $\ma^*$ minimising $D_{\mu,s}(\QQ^{[n]}_{\ma,1})$ as a function of $n$. Centre: $D_{\mu,s}^{1/s}(\QQ^{[n]}_{\ma^*,a})$ ({\color{red} {\bf - - -}}) and $D_{\mu,s}^{1/s}(\QQ^{[n]}_{1,a})$ ({\color{Cerulean} {\bf ---}}) as functions of $a$ for $n=20$ and $\ma^* = 0.846$. Right: $D_{\mu,s}^{1/s}(\QQ^{[n]}_{\ma^*,1})/D_{\mu,s}^{1/s}(\PP^{[n]}_1)$ ({\color{Cerulean} $\bullet$}) and $D_{\mu,s}^{1/s}(\QQ^{[n]}_{\ma^*,1})/D_{\mu,s}^{1/s}(\PP^{[n]}_{a^*})$ ({\color{Dandelion} $\blacklozenge$}) as functions of $n$; $s=10$, $d=10$.}
\label{F:sphere2}
\end{figure}

\subsection{Comparison between random quantisers uniform on a sphere and optimised full factorial $2^d$ designs} \label{S:sphere-2^d}
Let $n=2^d$ and $\Xb_n(b)$ be the full factorial $2^d$ design with points $\xb_i$ having coordinates $\pm b$, $b\in(0,1]$: the $\xb_i$ are vertices of the cube $[-b,b]^d$; they also belong to the sphere $\SS_{d-1}(a)$ with $a=\sqrt{d}\,b$. For $\SX=\SS_{d-1}(1)$, the Voronoi regions $\SV(\xb_i)=\{\xb\in\SX: \|\xb-\xb_i\| = \min_{\xb_j\in\Xb_n(b)} \|\xb-\xb_j\|\}$ are all identical up to a permutation of coordinates and do not depend on the value of $b$.

Consider first the case $s=2$. For $\mu$ uniform on $\SX=\SS_{d-1}(1)$, $D_{\mu,2}[\Xb_n(b)]=\Ex\{\|V-\bb\|^2\}$ where $\bb=(b,\ldots,b)\in\RR^d$ and $V=(V_1,\ldots,V_d)$ is uniform in $\SV(\bb)=\SS_{d-1}(1)\cap \RR_+^d$. Therefore
\bea
D_{\mu,2}[\Xb_n(b)]= \Ex\{ \|V\|^2 - 2\,b\sum_{i=1}^d  V_i + d b^2\} = 1 - 2\,bd\, \Ex\{V_1\} + db^2 \,.
\eea
Minimisation of $D_{\mu,2}[\Xb_n(b)]$ with respect to $b$ yields $b_2^*=\Ex\{V_1\}$ and the optimal $(\mu,2)$-distortion of a $2^d$ factorial design is $D_{\mu,2}[\Xb_n(b_2^*)]=1-d (b_2^*)^2$.
For $U^{(d)}=(U_1,\ldots,U_d)\simd \mu$, the first component $U_1$ has the density $\varphi_1(\cdot)$ of Lemma~\ref{L:projections-beta}* on $[-1,1]$, therefore, $V_1$ has the density $2\,\varphi_1(\cdot)$ on $[0,1]$, so that
\bea
b_2^* =\Ex\{V_1\} &=& \frac{\Gamma(d/2)}{\sqrt{\pi}\,\Gamma((d+1)/2)}\,, \\
D_{\mu,2}[\Xb_n(b_2^*)] &=& 1 - \frac{d}{\pi} \left(\frac{\Gamma(d/2)}{\Gamma((d+1)/2)}\right)^2 \,.
\eea

More generally, the analytic expression for the $(\mu,s)$-distortion $D_{\mu,s}[\Xb_n(b)]$ can be derived for any even $s$, but the calculations become more complicated as $s$ increases. For $s=4$ we obtain
\bea
D_{\mu,4}[\Xb_n(b)] &=& \Ex\{\|V-\bb\|^4\} \\
&=& 1 +4\,b^2\, \Ex\left\{\left(\sum_{i=1}^d V_i \right)^2\right\} + d^2 b^4 + 2\,d b^2 - 4\, bd\, \Ex\{V_1\} (1+d b^2) \,.
\eea
Using $\Ex\{(\sum_{i=1}^d V_i)^2\}=1+d(d-1) \Ex\{V_1V_2\}$, where $\Ex\{V_1V_2\}=2/(\pi d)$ from $\varphi_2(\cdot,\cdot)$ of Lemma~\ref{L:projections-beta}*, together with the expression above for $\Ex\{V_1\}$, we can express $D_{\mu,4}[\Xb_n(b)]$ as a fourth-degree polynomial in $b$ with coefficients depending on $d$. The optimal value $b_4^*$ can be obtained explicitly.

Denote by $a_s^*=\sqrt{d}b_s^*$ the radius of the sphere on which the optimised full factorial design $\Xb_n(b_s^*)$ lies. The values of $a_2^*$ and $a_4^*$ are very close for large $d$, since $a_j^*=\sqrt{d}\,b_j^*=\sqrt{2/\pi}+\SO(1/d)$ for $j=2,4$.

For $s=\infty$, $E_{\mu,\infty}[\Xb_n(b)]=\CR[\Xb_n(b)]$, the covering radius of $\Xb_n(b)$, with $(\CR[\Xb_n(b)])^2=1+d\,b^2-2\,b$ being minimised at $b_\infty^*=1/d$, which gives
$\CR[\Xb_n(b_\infty^*)]=\sqrt{1-1/d}$.

Figure~\ref{F:sphere3} compares the optimal full factorial designs $\Xb_n(b_s^*)$ for $s=2$ and 4 with optimised random quantisers $\PP_{a^*}^{[n]}$ on $\SS_{d-1}(1)$ with the same sample size $n=2^d$, with $a^*=a^*(d,n,s)$; see Section~\ref{S:uniform-on-S}. The left panel shows $a_s^*$ and
$a^*$ as functions of $d$. The right panel presents $D_{\mu,s}^{1/s}(\PP^{[n]}_{a^*})$ for the random quantisers and $E_{\mu,s}[\Xb_n(b_s^*)]$ for the full factorial designs, for $s=2$ and 4. For both types of point sets, the behaviours for $s=2$ and $s=4$ are similar. As $d$ tends to infinity, for the full factorial designs $a_s^*=\sqrt{d}\,b_s^*$ tends to the limit $\sqrt{2/\pi}\simeq 0.797885$ from above. For random quantisers, $a^*(s)=a^*(d,n,s)$ tends (from below) to the larger limiting value $\sqrt{3}/2 \simeq 0.8660$; see Theorem~\ref{Th:joint-exact-distortion-limit}*.
The expected $(\mu,s)$-distortion of random quantisers shows little sensitivity to the choice of $a$ in the neighbourhood of $a^*(s)$.
In particular, the plots of $D_{\mu,s}^{1/s}(\PP^{[n]}_{a_s^*})$ (not shown) and $D_{\mu,s}^{1/s}(\PP^{[n]}_{a^*(s)})$ almost coincide for $d>3$.

For both values of $s$, $a_s^* \approx a^*(s)$ for $d=7$. In the numerical comparisons shown here, this is the dimension at which random quantisers start exhibiting a smaller quantisation error than full factorial designs. As a consequence of the probabilistic method, the strict inequality between the mean random-quantiser distortion and the factorial-design distortion implies that for every dimension $d\geq 7$ there exist non-random quantisers with $n=2^d$ that have smaller quantisation errors for $s=2$ and 4 than optimised full factorial designs. The numerical optimality certificates in Section~\ref{S:NSC-examples-sphere}* further indicate that $\PP_{a^*}$ is optimal among i.i.d.\ random-quantiser distributions in the cases considered. The concentration result of Theorem~\ref{Th:distortion-concentration}* explains why typical realisations are close to their mean performance, but it is not needed for the probabilistic-method existence argument.

\begin{figure}[ht!]
\begin{center}
\includegraphics[width=.49\linewidth]{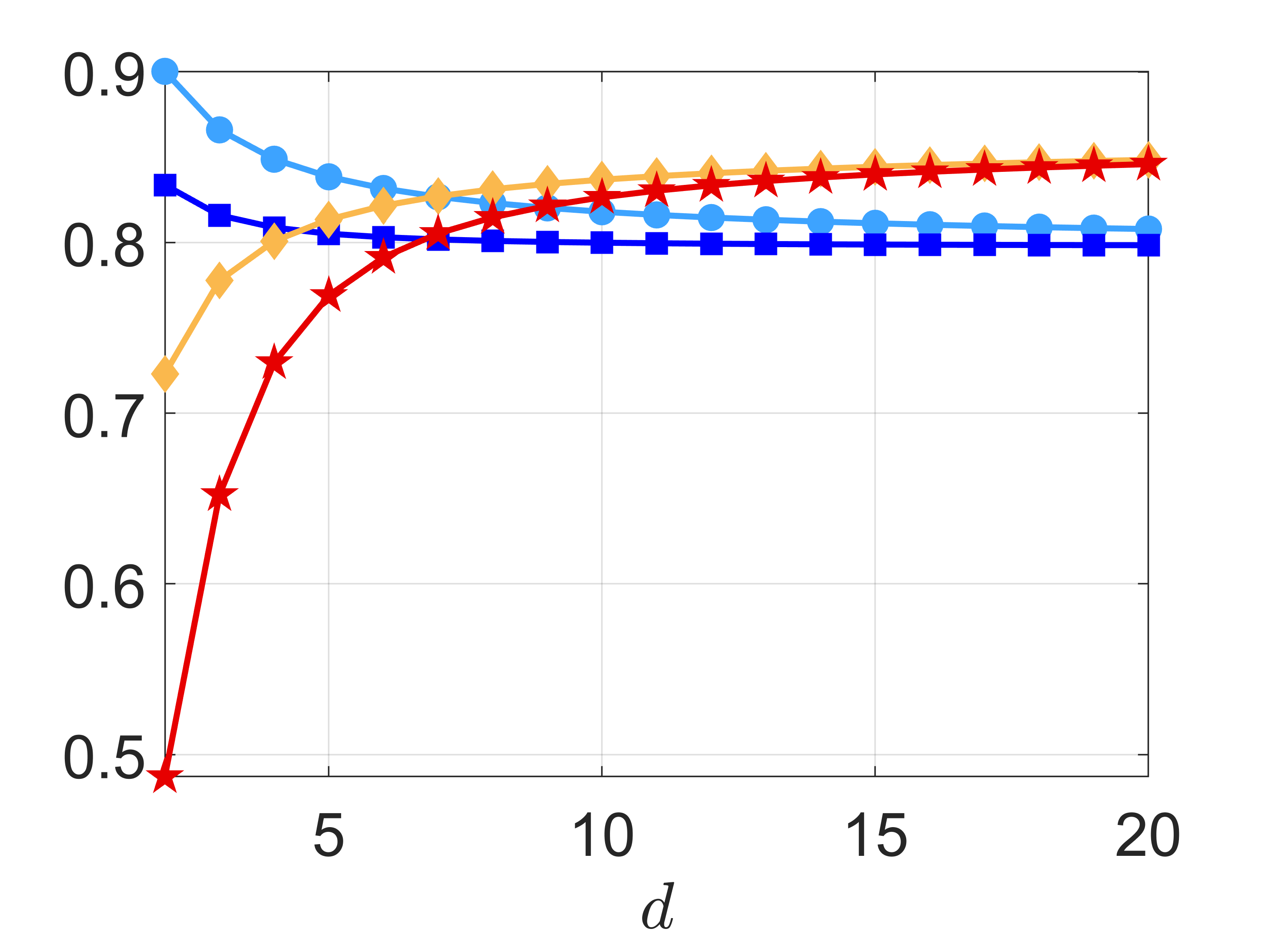}
\includegraphics[width=.49\linewidth]{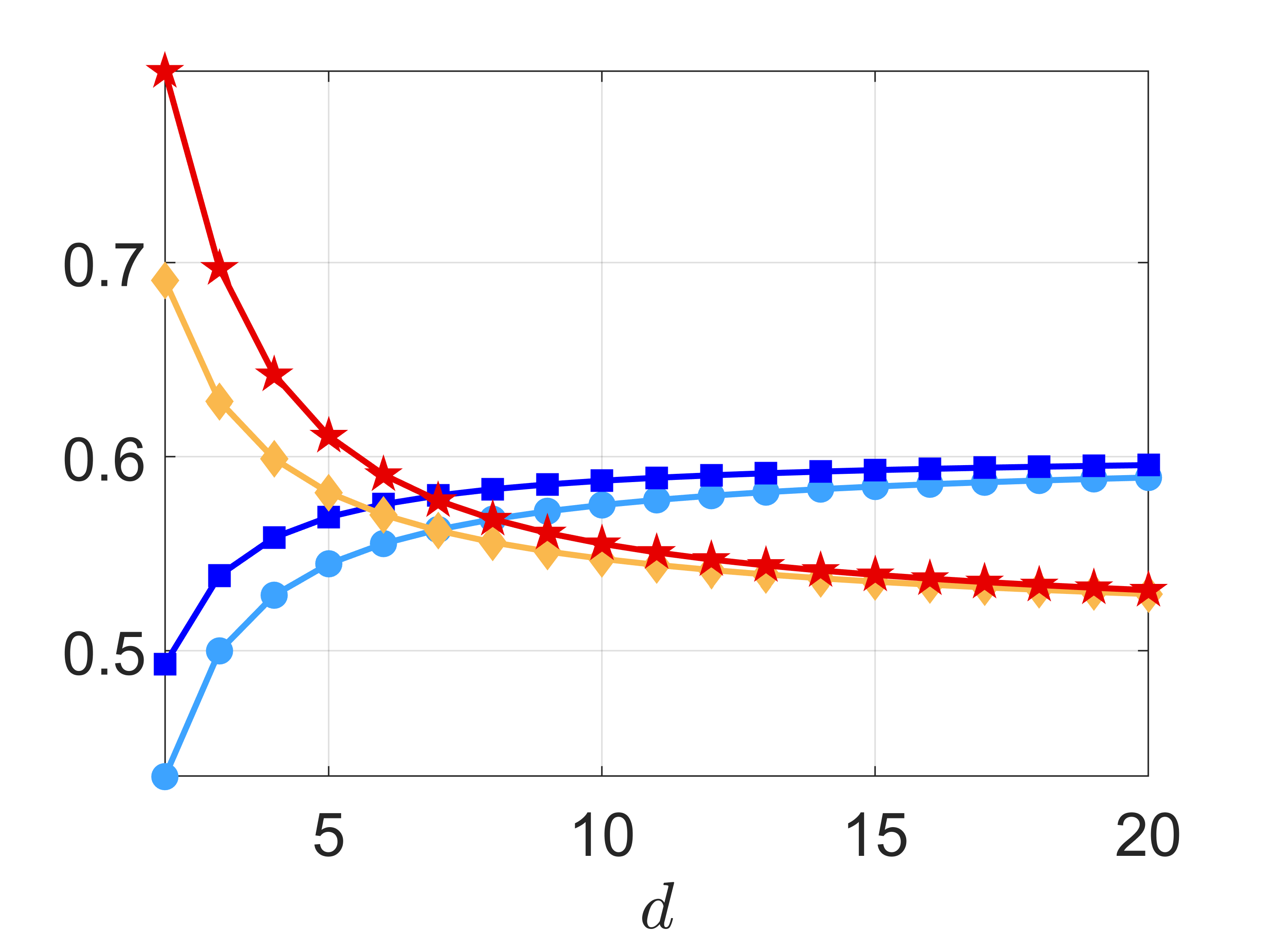} \\
\end{center}
\caption{\small Left: Values $a^*(s)$ minimising $D_{\mu,s}(\PP^{[n]}_a)$ for $n=2^d$ with $s=2$ ({\color{Dandelion} $\blacklozenge$}) and $s=4$ ({\color{red} $\bigstar$}), and values $a_s^*=\sqrt{d}\,b_s^*$ minimising $E_{\mu,s}[\Xb_n(b)]$ for $s=2$ ({\color{Cerulean} $\bullet$}) and $s=4$ ({\color{blue} {\tiny $\blacksquare$}}), as functions of $d$.
Right: $D_{\mu,s}^{1/s}(\PP^{[n]}_{a^*})$ for $n=2^d$ with $a^*=a^*(d,n,s)$, $s=2$ ({\color{Dandelion} $\blacklozenge$}) and $s=4$ ({\color{red} $\bigstar$}), and values $E_{\mu,s}[\Xb_n(b_s^*)]$ for $s=2$ ({\color{Cerulean} $\bullet$}) and $s=4$ ({\color{blue} {\tiny $\blacksquare$}}), as functions of $d$.}
\label{F:sphere3}
\end{figure}


\section{$\mu$ is uniform in the unit ball $\SB_d(1)$} 
\label{S:ball}

Here $\mu$ is uniform in $\SB_d(1)$, so that the density of $r=\|U\|$ for $U\simd\mu$ is $\psi(\tau)=d\,\tau^{d-1}$; see Section~\ref{S:NSC-examples-ball}*.

When the random quantiser has the distribution $\PP=\PP_a$ uniform on $\SS_{d-1}(a)$, $D_{\mu,s}(\PP_a^{[n]})$ is given by (\ref{DS-Pa}*).
When $\PP=\PP_{0,b}$ uniform in $\SB_d(b)$, the c.d.f.\ $\Phi(\cdot)$ of $R=\|X\|$ has the density $\phi_{0,b}(\rho)=d\,\rho^{d-1}/b^d$ for $\rho\in[0,b]$.
More general spherically symmetric distributions $\PP$ could also be used. Here, the class $\PP_{0,b}$ is considered as a simple one-parameter benchmark; it does not contain the globally optimal radial distribution, which consists of a mixture of uniform distributions on concentric spheres (see Section~\ref{S:NSC-examples-ball}*).
The expected $(\mu,s)$-distortion $D_{\mu,s}(\PP_{0,b}^{[n]})$ is given by (\ref{Ls-mean-final}*)
where
\bea
H(r,t;\Phi,n) =\bigg(1- \int_0^b I_\upsilon(\delta,\delta) \,\phi_{0,b}(\rho)\,\dd\rho \bigg)^n \,,
\eea
with $\upsilon=\upsilon(t,\rho,r)$ given by (\ref{v}*).

Figure~\ref{F:ball} presents the same information as Figure~\ref{F:sphere-supp} for the ball configuration: we consider the distribution $\PP_{0,b^*}$, which is optimal within the one-parameter class $\{\PP_{0,b}:b\geq0\}$, where $b^*=b^*(d,n,s)$ minimises $D_{\mu,s}(\PP_{0,b}^{[n]})$, and the right column compares the performances of $\PP_{0,b^*}$ and $\PP_{0,1}$.

\begin{figure}[ht!]
\begin{center}
\includegraphics[width=.32\linewidth]{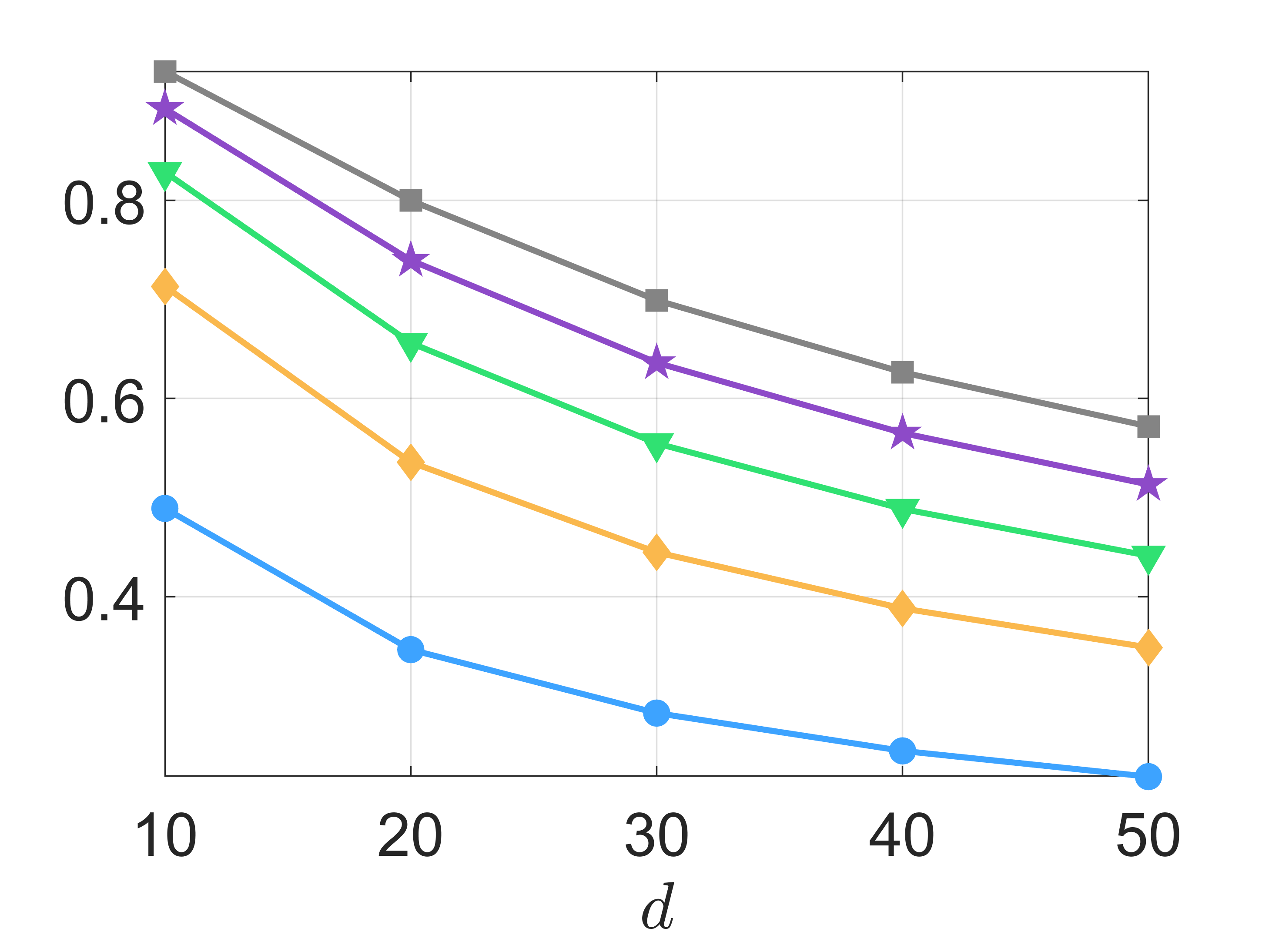}
\includegraphics[width=.32\linewidth]{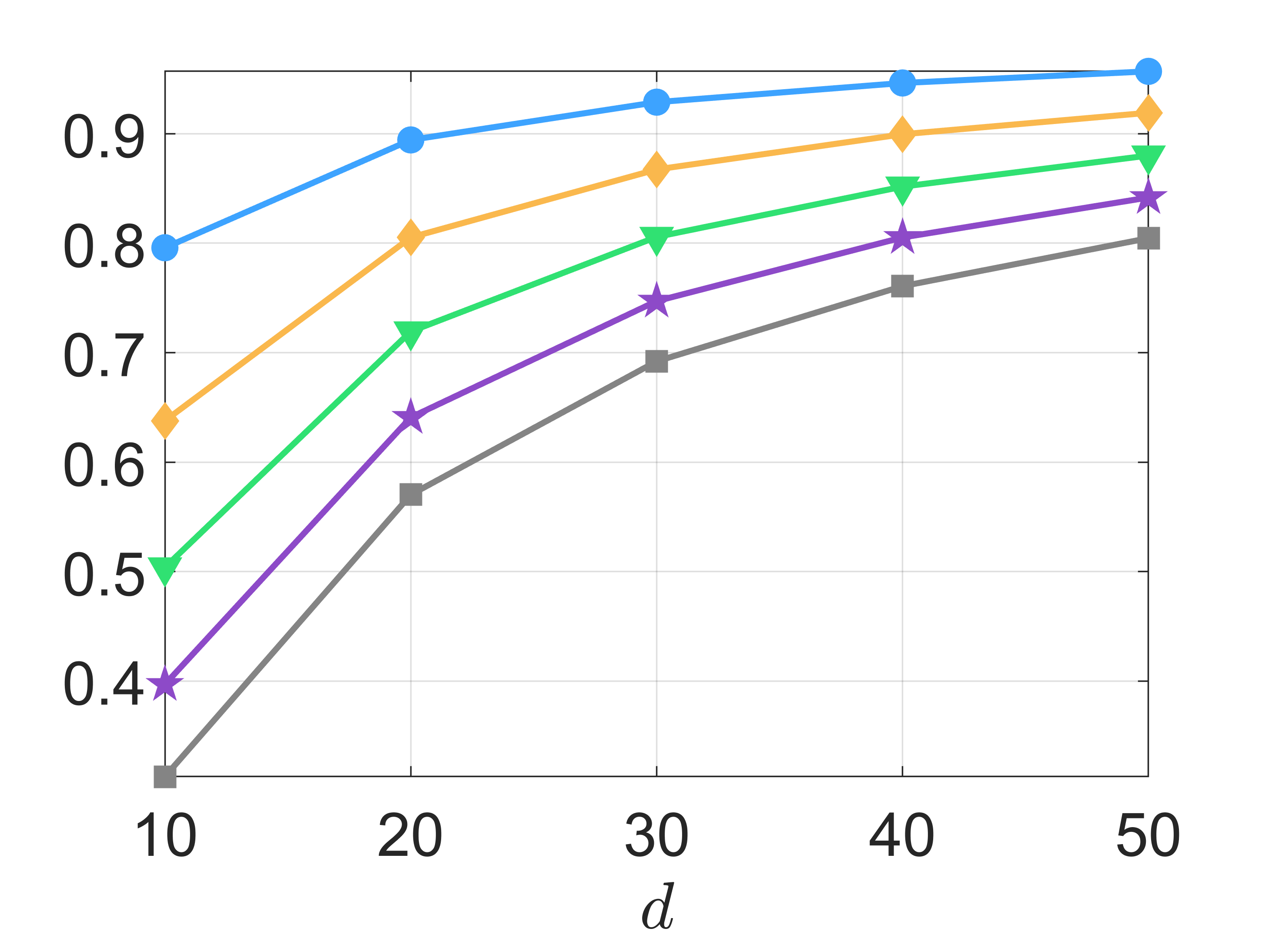}
\includegraphics[width=.32\linewidth]{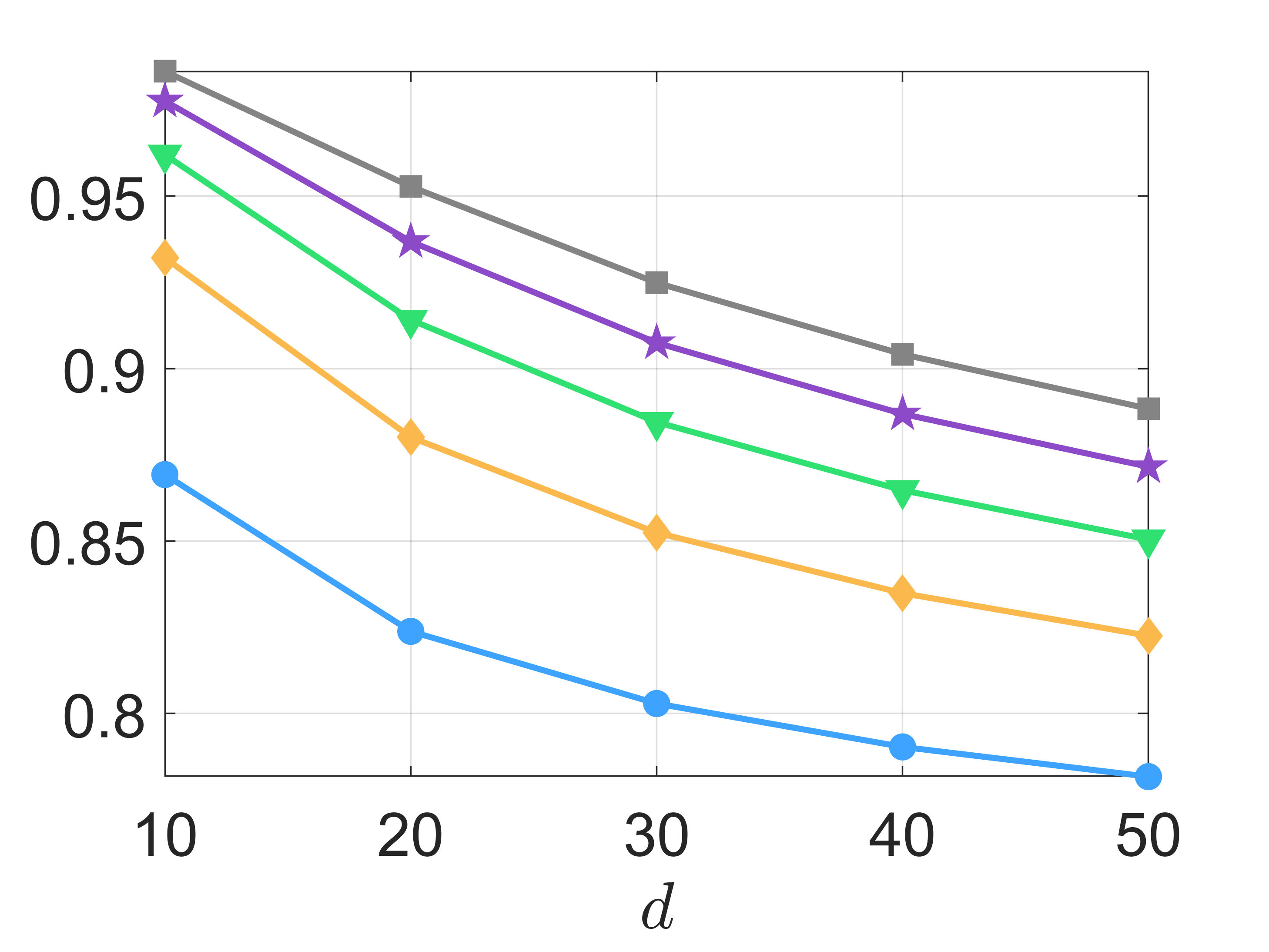} \\
\includegraphics[width=.32\linewidth]{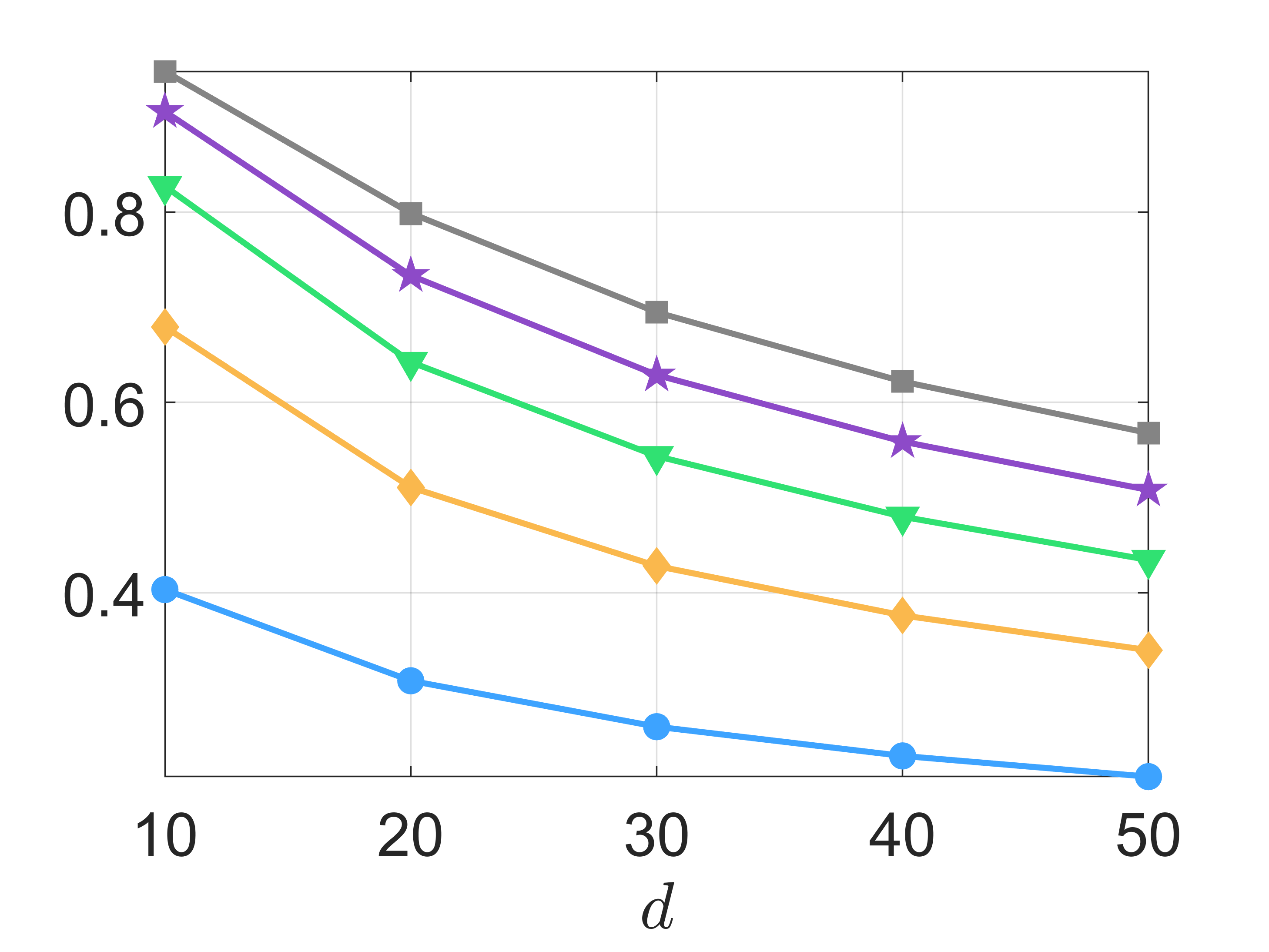}
\includegraphics[width=.32\linewidth]{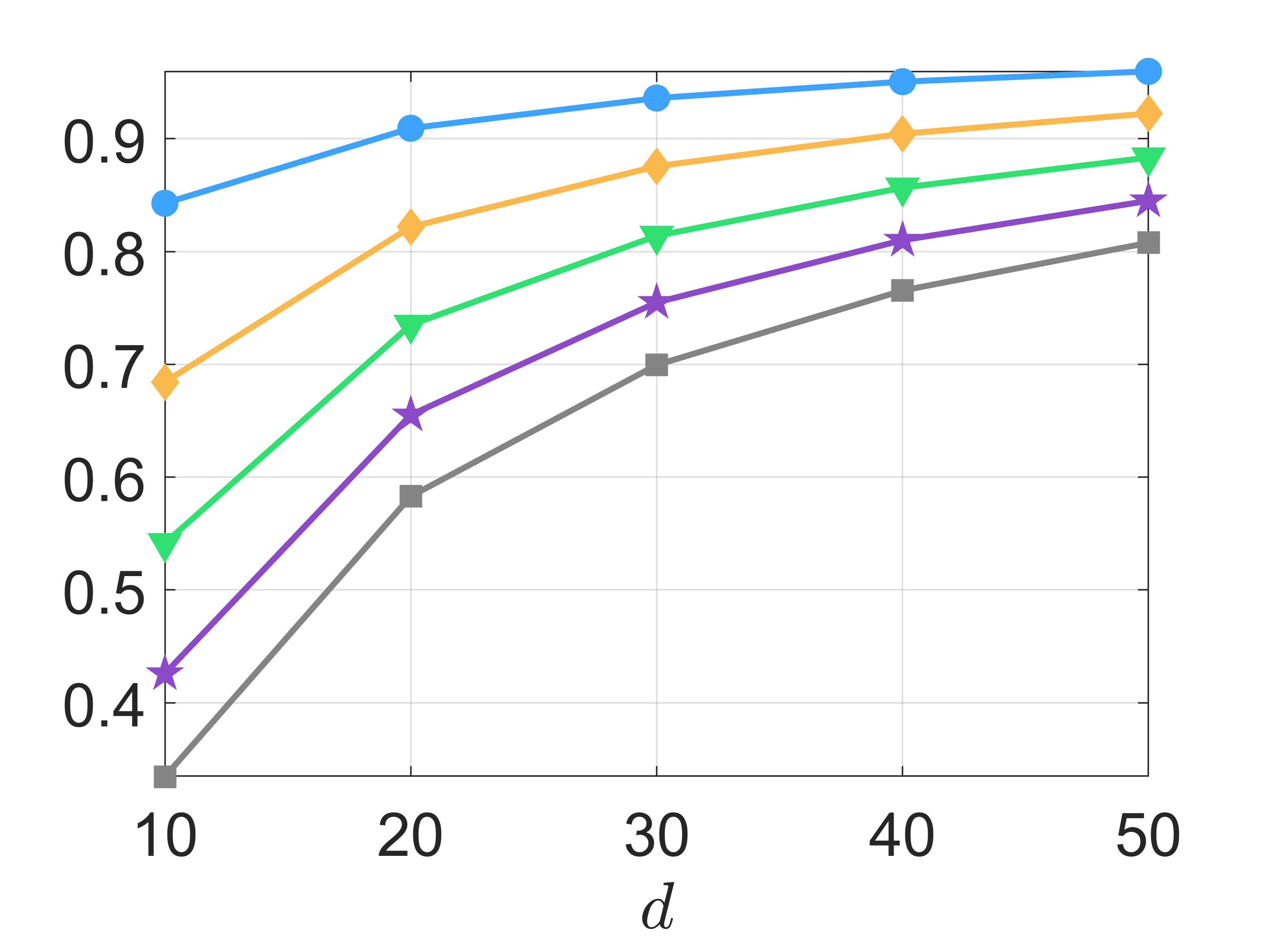}
\includegraphics[width=.32\linewidth]{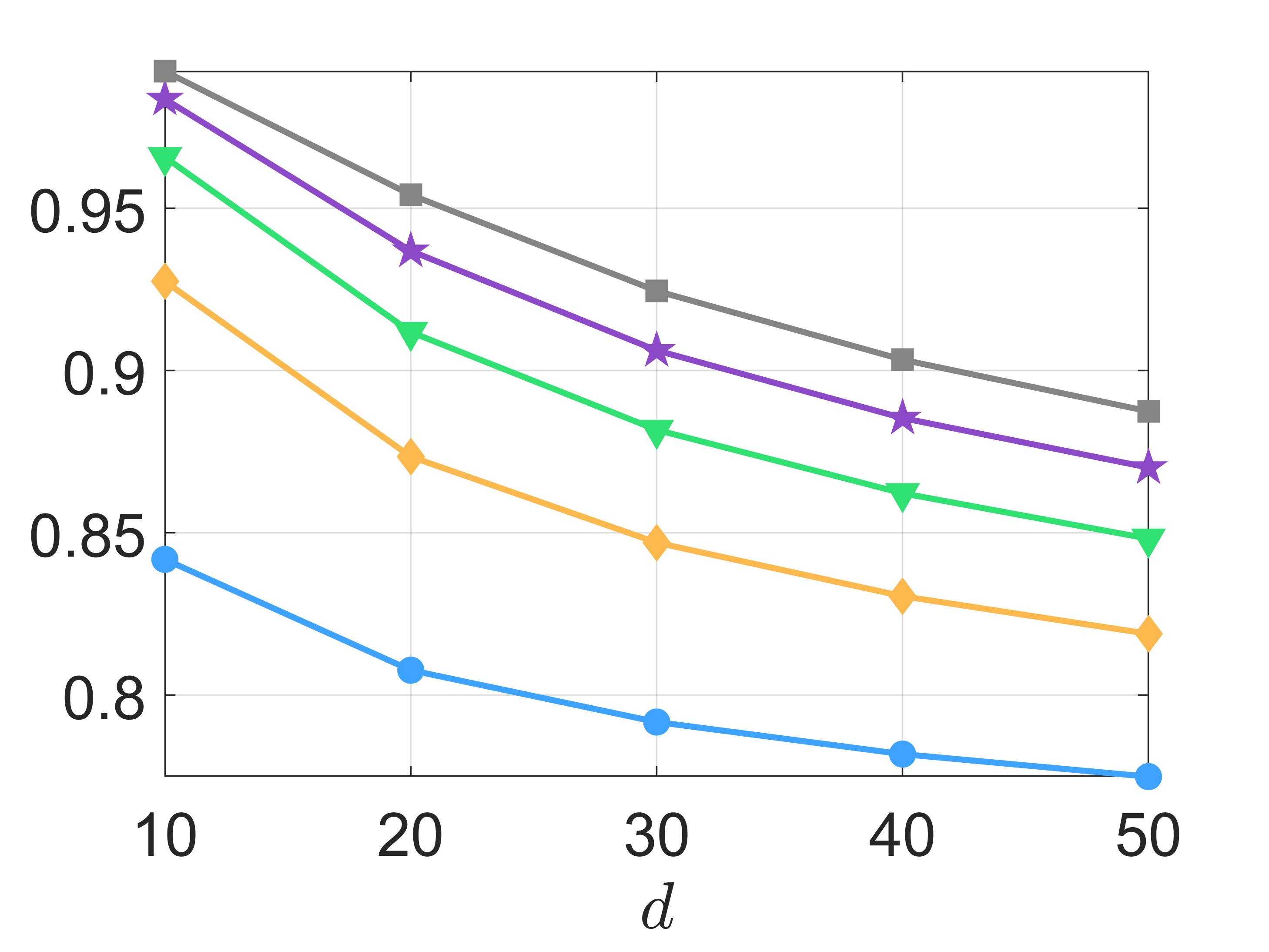}
\end{center}
\caption{\small Ball. Value $b^*$ minimising $D_{\mu,s}(\PP^{[n]}_{0,b})$ (left column), $D_{\mu,s}^{1/s}(\PP^{[n]}_{0,b^*})$ (central column) and ratio $D_{\mu,s}^{1/s}(\PP^{[n]}_{0,b^*})/D_{\mu,s}^{1/s}(\PP^{[n]}_{0,1})$ (right column) as functions of $d$ for $s=1$ (top row) and $s=10$ (bottom row) and different values of $n$: $n=10$ ({\color{Cerulean} $\bullet$}), $n=10^2$ ({\color{Dandelion} $\blacklozenge$}), $n=10^3$ ({\color{green} $\blacktriangledown$}), $n=10^4$ ({\color{Orchid} $\bigstar$}), and $n=10^5$ ({\color{gray} {\tiny $\blacksquare$}}).}
\label{F:ball}
\end{figure}

The quantities considered behave qualitatively as in Figure~\ref{F:sphere-supp}.  Note, however, that $b^*$ is larger than $a^*$ plotted in Figure~\ref{F:sphere-supp} (left columns; in particular, on the bottom row with $s=10$, $b^*$ is much larger than $a^*$), that the displayed values of $D_{\mu,s}(\PP_{0,b^*}^{[n]})$ for the ball target are smaller than the corresponding displayed values of $D_{\mu,s}(\PP_{a^*}^{[n]})$ for the sphere target (central columns), and that the efficiency of a ``classical'' (i.e., asymptotically optimal) random design compared to an optimised design is slightly larger for the ball than for the sphere (right columns). These distortion values correspond to different target measures and therefore should not be interpreted as a direct performance comparison between the two quantisers. 
For fixed $d$, the empirical measure of an optimal quantiser minimising $E_{\mu,s}(\Xb_n)$ converges weakly to the uniform probability measure $\PP_{0,1}$ as $n\to\infty$ (see \cite[Table~7.1]{GrafL2000}). Consistent with this result, we empirically observe that the optimal parameters $b^*$ and $a^*$ both converge to 1.

Figure~\ref{F:bstar_ALLastar_ball_s2_D}
displays the optimal values $b^*(d,n,s)$ and $a^*(d,n,s)$, respectively minimising $D_{\mu,s}(\PP_{0,b}^{[n]})$ and $D_{\mu,s}(\PP_a^{[n]})$, plotted as functions of $d$ for $s=2$ and two distinct values of $n$. The figure shows that $b^*>a^*$, and the gap $b^*-a^*$ narrows as $d$ increases, reflecting the observed convergence of both $a^*$ and $b^*$ to zero as $d\to\infty$. The behaviour remains qualitatively unchanged for other values of $s$, especially when the dimension $d$ is large, in agreement with the results in Section~\ref{S:n-growing-with-d}*.

\begin{figure}[ht!]
\begin{center}
\includegraphics[width=.49\linewidth]{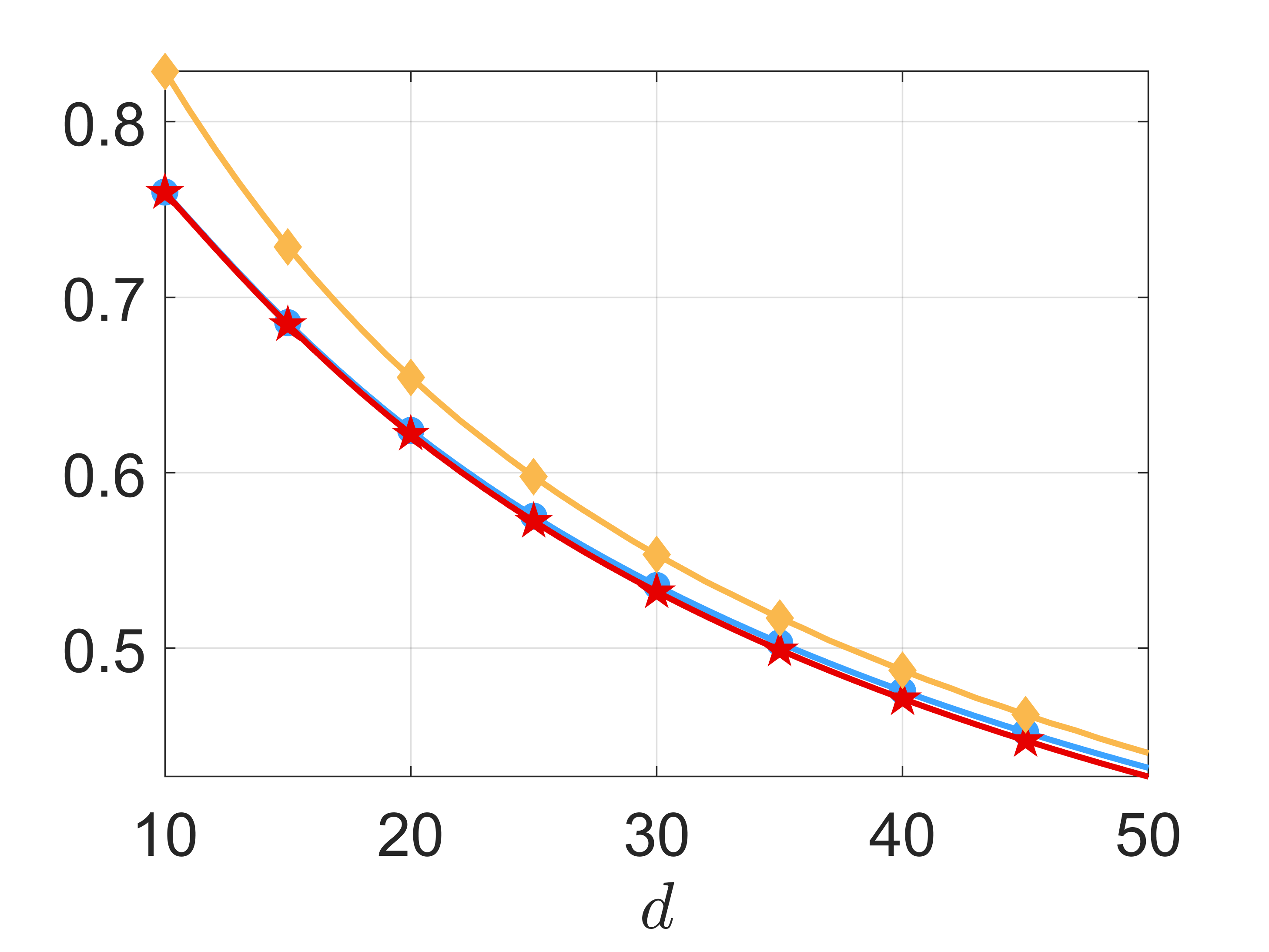}
\includegraphics[width=.49\linewidth]{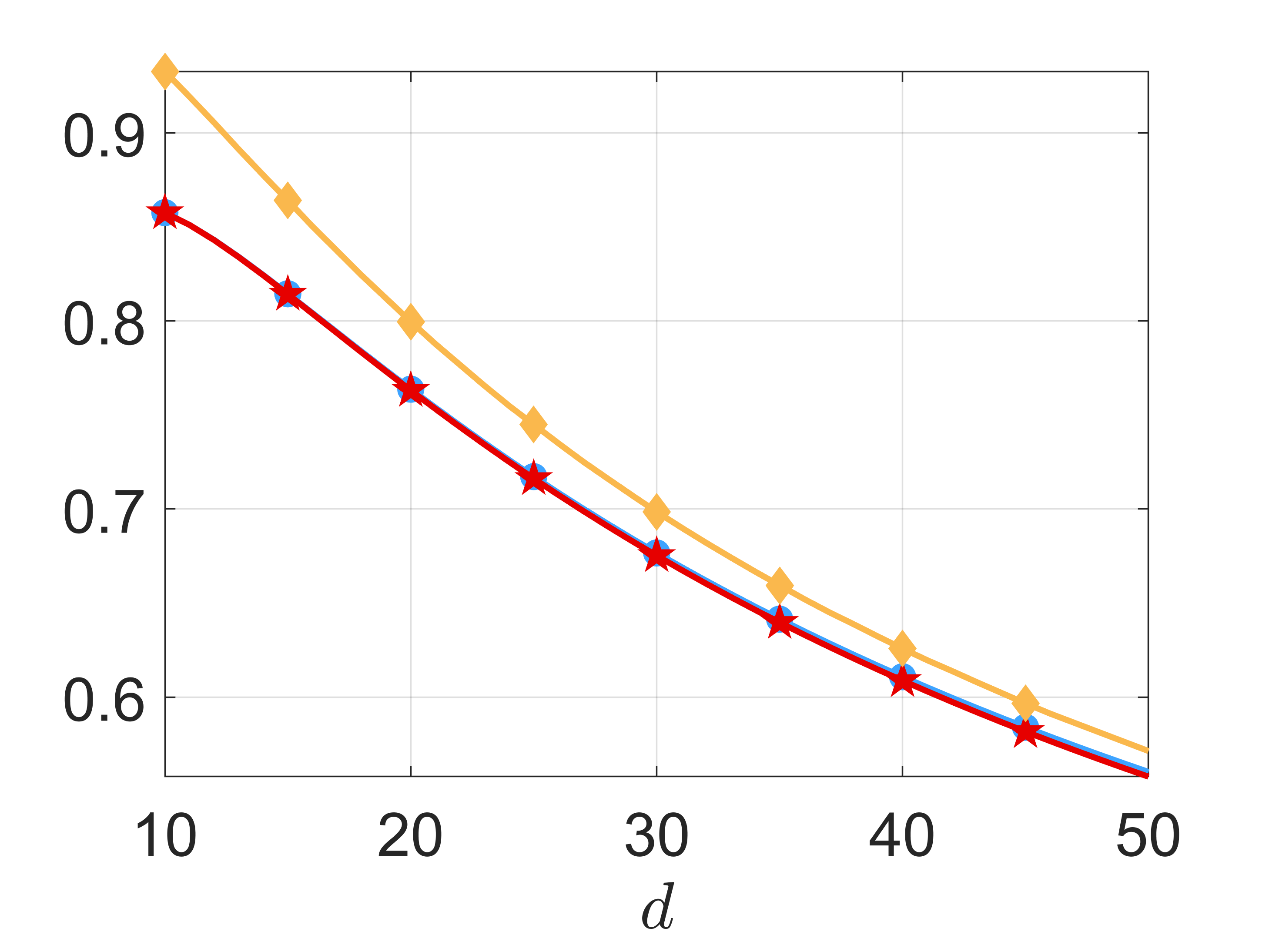}
\end{center}
\caption{\small $a^*(d,n,2)$ ({\color{Cerulean} $\bullet$}), $\widehat{a^*}(d,n,2)$ ({\color{red} $\bigstar$})
and $b^*(d,n,2)$ ({\color{Dandelion} $\blacklozenge$}) as functions of $d$ for $n=1\,000$ (left) and $n=100\,000$ (right).}
\label{F:bstar_ALLastar_ball_s2_D}
\end{figure}

Given the observations in Figures~\ref{F:ball} and 
\ref{F:bstar_ALLastar_ball_s2_D}, two key patterns emerge:
(\textit{i}) for large dimensions $d$, the values of $a^*(d,n,s)$ and $b^*(d,n,s)$ become very close; (\textit{ii}) the quantiser $\PP_{0,b^*}^{[n]}$ significantly outperforms $\PP_{0,1}^{[n]}$ when $d$ is sufficiently large. Moreover, we also observe that $\PP_{a^*}^{[n]}$ outperforms $\PP_{0,b^*}^{[n]}$ for reasonable design sizes $n$; see Figure~\ref{F:ALLeff_ball_s2_D}.

\begin{figure}[ht!]
\begin{center}
\includegraphics[width=.49\linewidth]{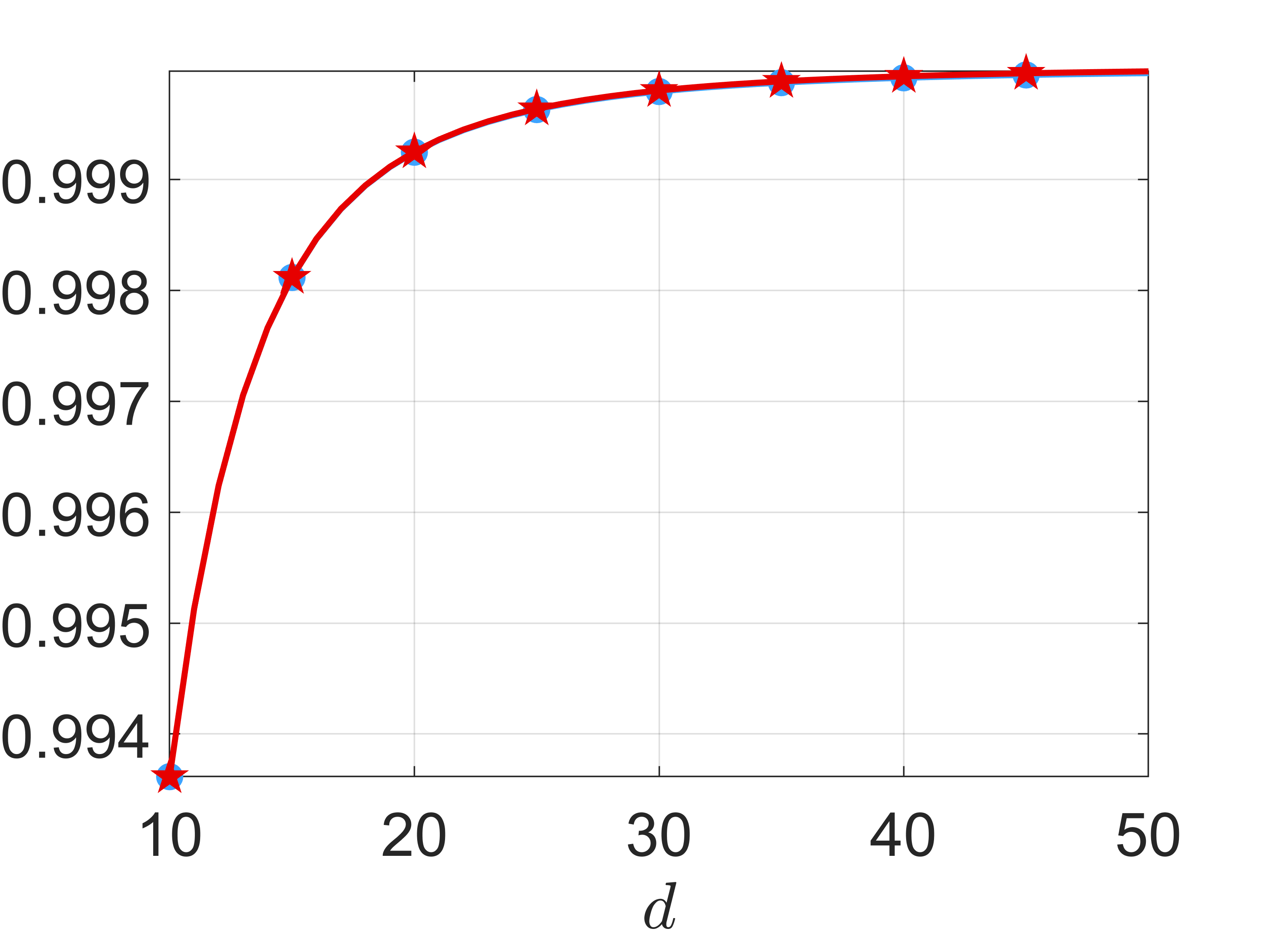}
\includegraphics[width=.49\linewidth]{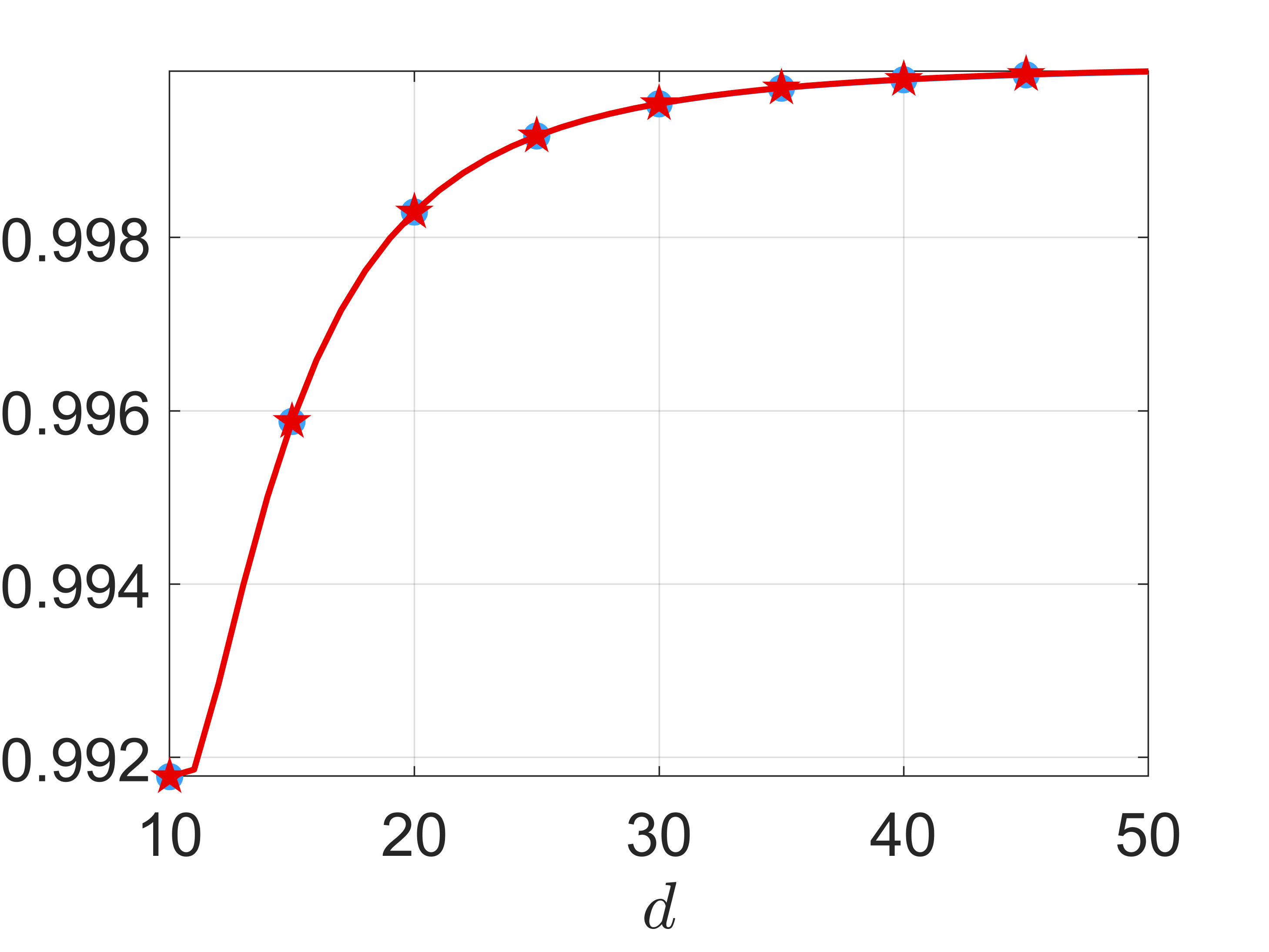}
\end{center}
\caption{\small Efficiencies $D_{\mu,s}^{1/s}(\PP_a^{[n]})/D_{\mu,s}^{1/s}(\PP_{0,b^*}^{[n]})$ as functions of $d$: $a=a^*(d,n,2)$ ({\color{Cerulean} $\bullet$}), $a=\widehat{a^*}(d,n,2)$ ({\color{red} $\bigstar$});
$n=1\,000$ (left) and $n=100\,000$ (right).}
\label{F:ALLeff_ball_s2_D}
\end{figure}

Since $b^*$ converges to 1 as $n \to \infty$, with $\PP_{0,1}^{[n]}$ being asymptotically optimal, a natural question arises: for a fixed dimension $d$, at what sample size $n=n_2(d,s)$ does $\PP_{a^*}^{[n]}$ stop being preferable to $\PP_{0,b^*}^{[n]}$? Empirical analysis reveals that this transition occurs only for extremely large values of $n$.
The left panel of Figure~\ref{F:nstar-ball_s2-4-10} presents $n_2(d,s)$ on a logarithmic scale (computed by bisection) as a function of $d=3,\ldots,20$ for three values of $s$. Over this range, the results suggest a super-exponential increase of $n_2(d,s)$ with $d$.

\begin{figure}[ht!]
\begin{center}
\includegraphics[width=.49\linewidth]{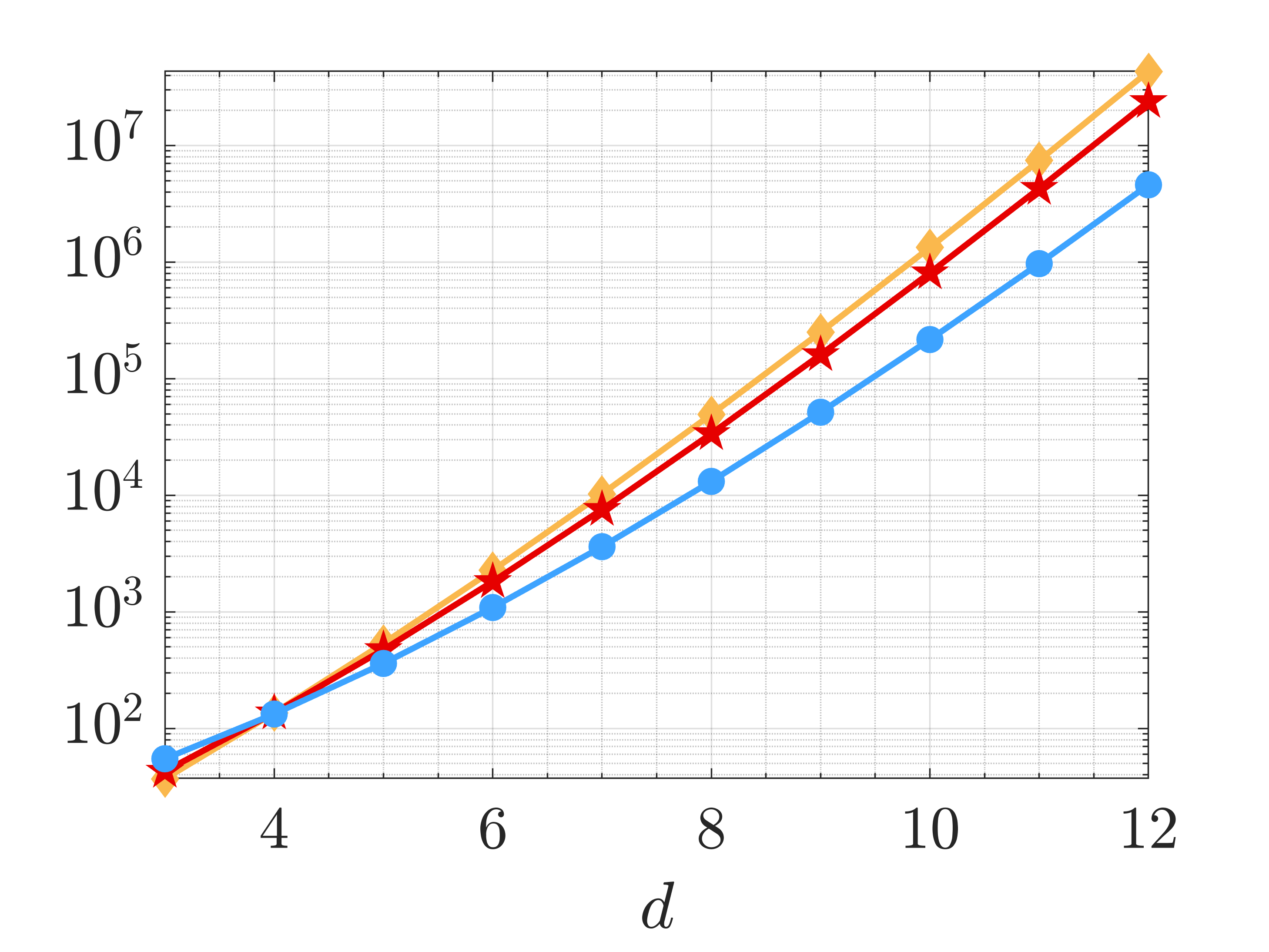}
\includegraphics[width=.49\linewidth]{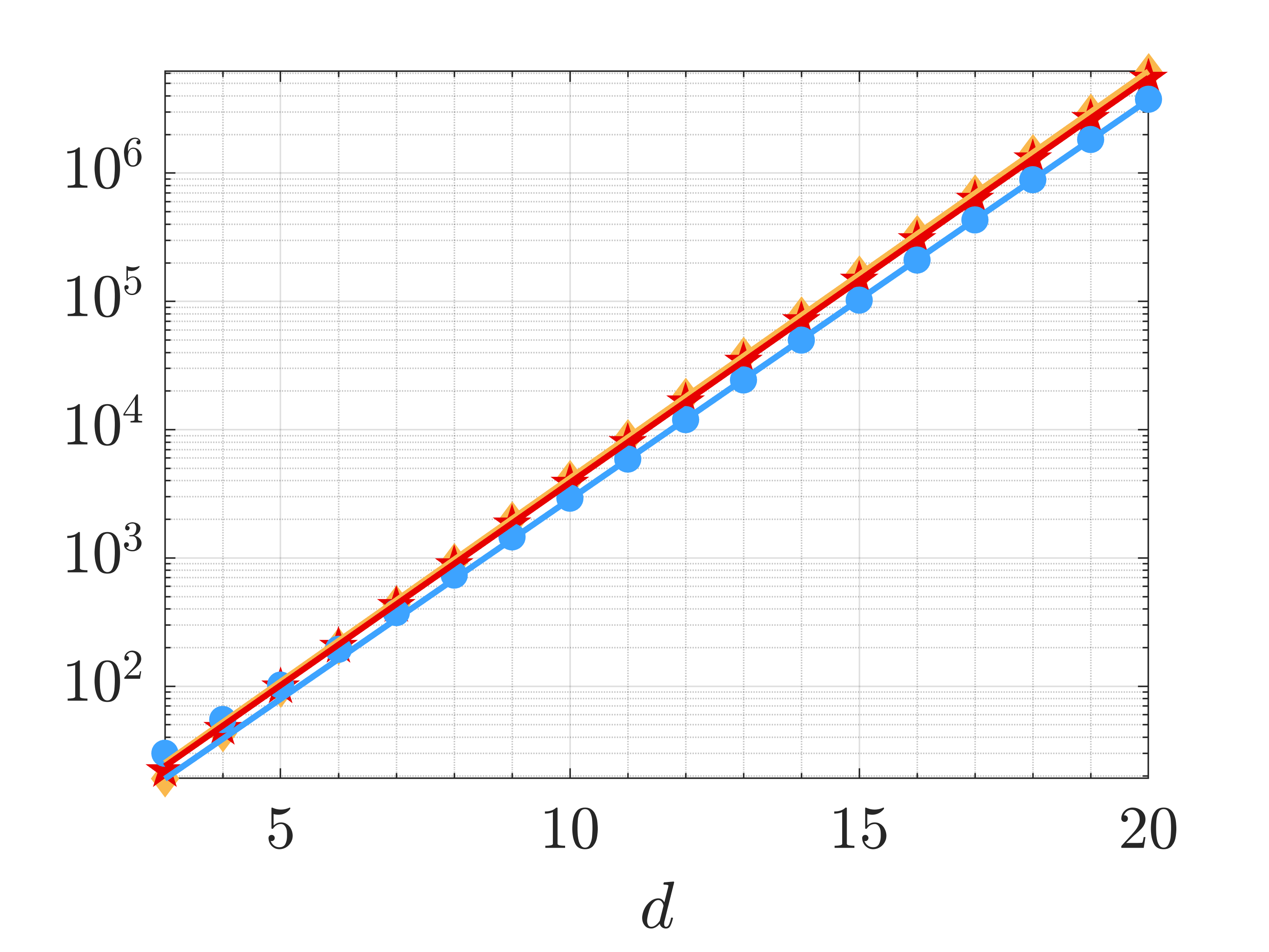}
\end{center}
\caption{\small $n_2(d,s)$ as a function of $d$ for $s=2$ ({\color{Dandelion} $\blacklozenge$}), $s=4$ ({\color{red} $\bigstar$}) and $s=10$ ({\color{Cerulean} $\bullet$}). Left: $\mu$ is uniform in $\SB_d(1)$. Right: $\mu$ is spherically normal.
}
\label{F:nstar-ball_s2-4-10}
\end{figure}

In the non-asymptotic regime, the extreme-value approximation $\widehat{a^*}(d,n,2)$ of $a^*(d,n,2)$ is given by (\ref{hat_a_2}*) for $s=2$; for $s=4$, $\widehat{a^*}(d,n,4)$ is a root of the third-degree polynomial corresponding to the derivative of (\ref{E4^2-general}*).
Figure~\ref{F:bstar_ALLastar_ball_s2_D} displays $\widehat{a^*}(d,n,2)$ as a function of $d$ for two values of $n$: it is practically indistinguishable from $a^*(d,n,2)$. The efficiencies $D_{\mu,s}^{1/s}(\PP_{\widehat{a^*}}^{[n]})/D_{\mu,s}^{1/s}(\PP_{0,b^*}^{[n]})$ and $D_{\mu,s}^{1/s}(\PP_{a^*}^{[n]})/D_{\mu,s}^{1/s}(\PP_{0,b^*}^{[n]})$ are virtually indistinguishable in Figure~\ref{F:ALLeff_ball_s2_D}; both $\PP_{a^*}^{[n]}$ and $\PP_{\widehat{a^*}}^{[n]}$ outperform $\PP_{0,b^*}^{[n]}$ for the values of $n$ and $d$ considered. Moreover, $\PP_{0,b^*}^{[n]}$ significantly outperforms $\PP_{0,1}^{[n]}$; see Figure~\ref{F:ball}.

\section{$\mu$ is a spherically symmetric normal distribution}\label{S:normal}

Here $\mu$ is the probability measure of the normal distribution $\SN(\0b_d,\Ib_d/d)$, with $\Ib_d$ the $d$-dimensional identity matrix; see Section~\ref{S:NSC-examples-normal}*.

Let $\ms^* = \ms^*(d,n,s)$ denote the value of $\ms$ that minimises the distortion $D_{\mu,s}(\mu^{[n]}_\ms)$; as previously $a^* = a^*(d,n,s)$ is the value of $a$ that minimises $D_{\mu,s}(\PP_a^{[n]})$.

The empirical distribution of an optimal quantiser $\Xb_n^*$ minimising $E_{\mu,s}(\Xb_n)$ converges weakly to $\SN(\0b_d,(\ms_\infty^*)^2\Ib_d/d)$ as $n\to\infty$, where $\ms_\infty^*=\sqrt{1+s/d}$; see \cite[Table~7.1]{GrafL2000}. The left panel of Figure~\ref{F:normal-a-sigma-s} (for $d=10$ and $n=100\,000$) illustrates that $\ms^*=\ms^*(d,n,s)$  increases with $s$, as does $\ms_\infty^*$, and that $a^*=a^*(d,n,s)$ also increases with $s$ (see Figure~\ref{F:normal}*-left for the case $n=1\,000$).

\begin{figure}[ht!]
\begin{center}
\includegraphics[width=.49\linewidth]{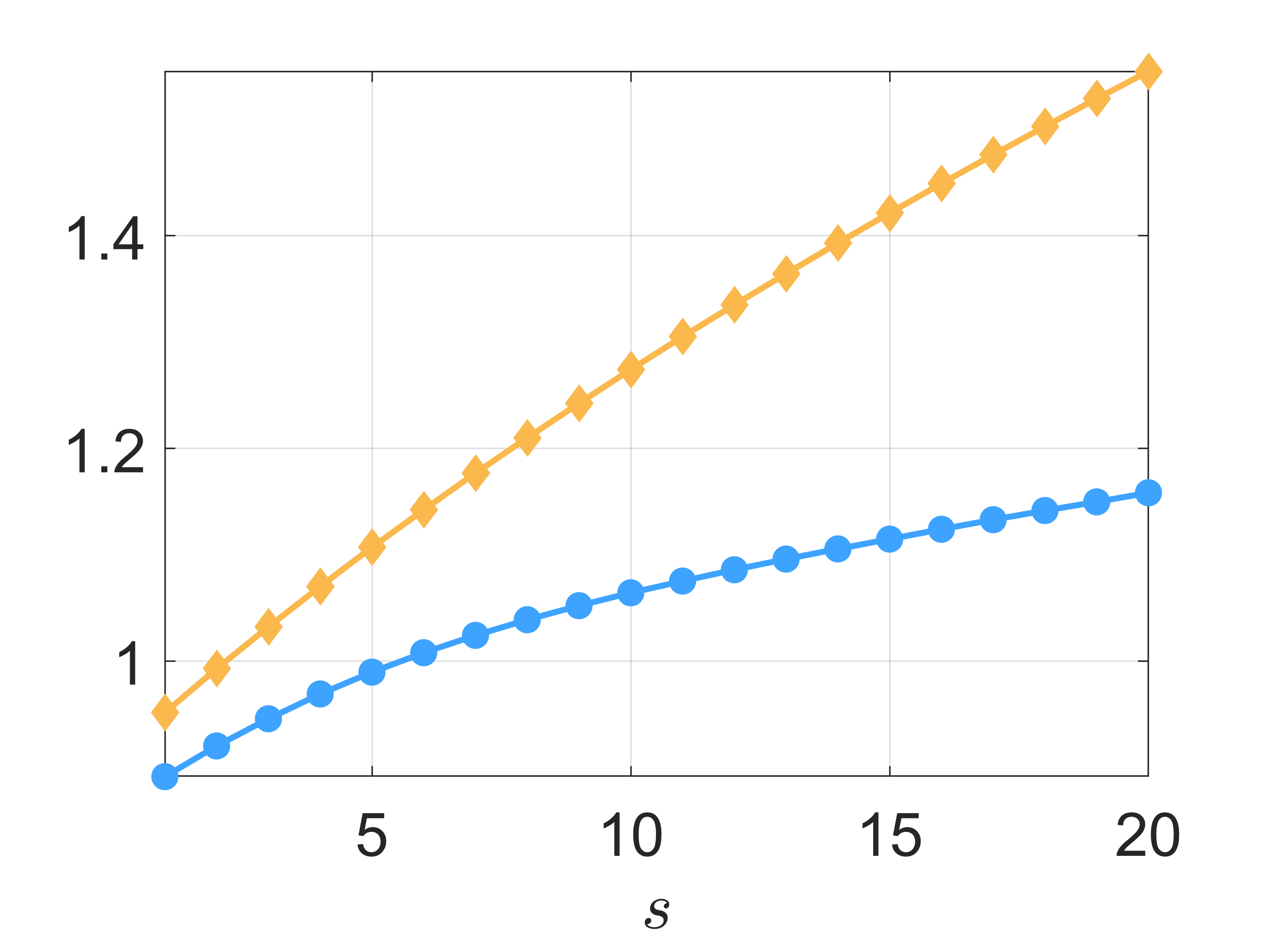}
\includegraphics[width=.49\linewidth]{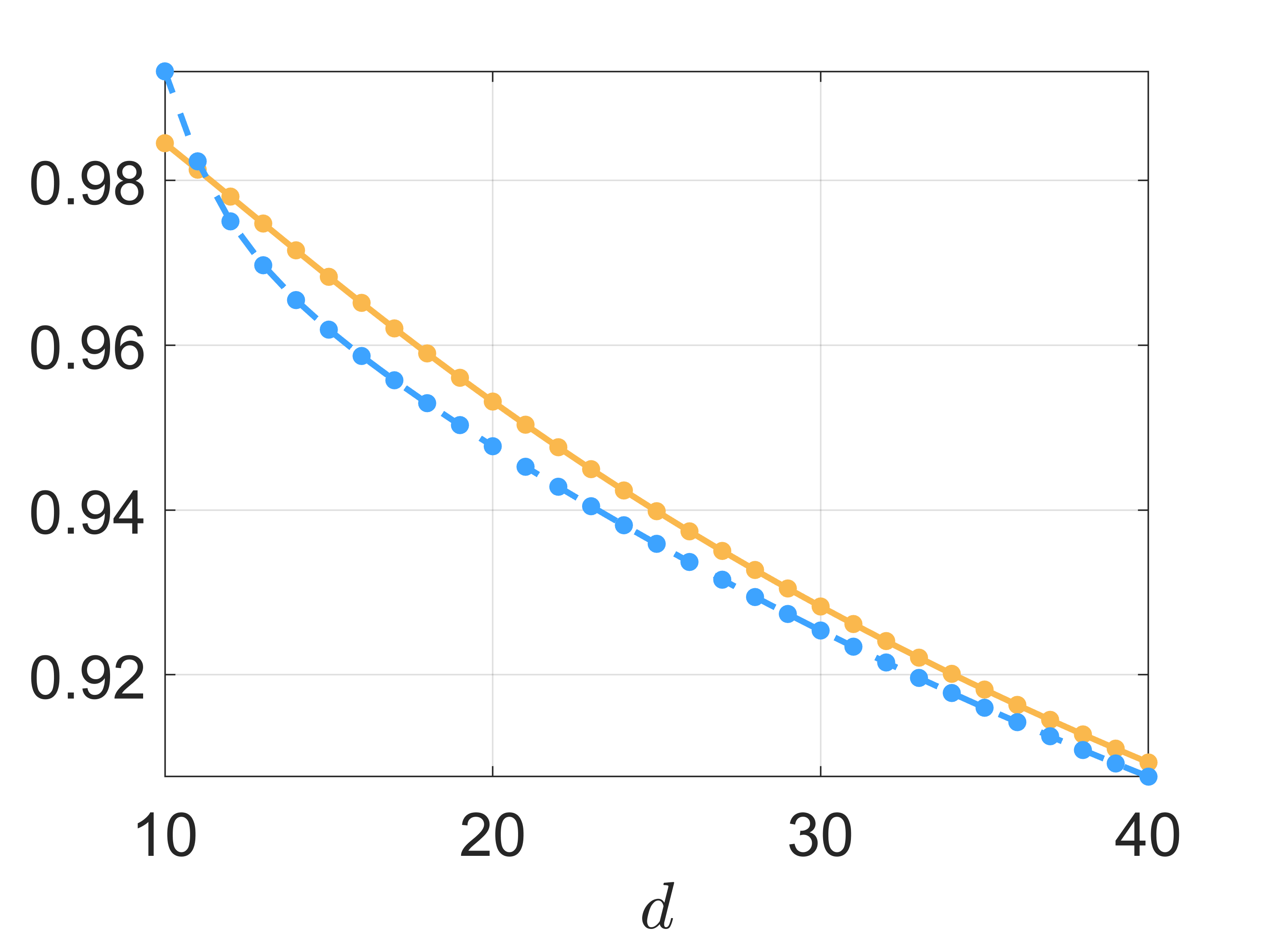}
\end{center}
\caption{\small Left: optimal values $a^*(d,n,s)$ ({\color{Cerulean} $\bullet$}) and $\ms^*(d,n,s)$ ({\color{Dandelion} $\blacklozenge$}) as functions of $s$ for $d=10$ and $n=100\,000$. Right: ratios $D_{\mu,s}^{1/s}(\PP^{[n]}_{a^*})/D_{\mu,s}^{1/s}(\mu^{[n]}_{\ms_\infty^*})$ ({\color{Cerulean} {\bf - - -}}) and $D_{\mu,s}^{1/s}(\mu^{[n]}_{\ms^*})/D_{\mu,s}^{1/s}(\mu^{[n]}_{\ms_\infty^*})$ ({\color{Dandelion} {\bf ---}}) as functions of $d$ for $s=2$ and $n=10\,000$.}
\label{F:normal-a-sigma-s}
\end{figure}

Figure~\ref{F:normal-supp} presents $D_{\mu,s}^{1/s}(\mu^{[n]}_\ms)$ and $D_{\mu,s}^{1/s}(\PP_{a=\ms}^{[n]})$ as functions of $\ms$ for $s=2$, $n=1\,000$ and $d=5$, 10, and $20$.
For $d=5$ (left panel), the asymptotically optimal scale is $\ms_\infty^*=\sqrt{1.4}\simeq 1.1832$. For $d\geq 10$ (centre and right panels), the plots of $D_{\mu,s}^{1/s}(\PP_\ms^{[n]})$ and $D_{\mu,s}^{1/s}(\mu^{[n]}_\ms)$ are close, especially near their minima.
The behaviour is similar when using other values of $s$.

The right panel of Figure~\ref{F:normal-a-sigma-s} presents the efficiencies of the asymptotically optimal distribution $\mu_{\ms_\infty^*}$ relative to $\PP_{a^*}$ and $\mu_{\ms^*}$, for $n=10\,000$, as functions of $d$. These efficiencies are always less than 1, meaning that the asymptotic regime is not yet reached for the values of $d$ considered. At $d=10$, however, we can observe that, contrary to the central panel of Figure~\ref{F:normal-supp} where $n=1\,000$, now $D_{\mu,2}(\PP_{a^*}^{[n]}) > D_{\mu,2}(\mu^{[n]}_{\ms^*})$: with $n=10\,000$ points we approach the asymptotic regime and $\PP_{a^*}$ is dominated by a suitably chosen normal distribution. The smallest $n=n_2(d,s)$ at which this occurs is plotted in the right panel of Figure~\ref{F:nstar-ball_s2-4-10} for $s=2,4,10$ (the case $s=2$ is also presented in Figure~\ref{F:asymptotics-ball_s2}*). These plots suggest that $D_{\mu,s}(\PP_{a^*}^{[n]}) < D_{\mu,s}(\mu^{[n]}_{\ms^*})$ for $n \lesssim n_0\, \ml_0^d$ for some $n_0$ and $\ml_0$. We now investigate the behaviour of $D_{\mu,s}(\PP_{a}^{[n]})$ for other choices of $a$ in a similar asymptotic regime where $n=n(d)$ grows exponentially fast with $d$. We take $n(d)=2^d$. According to the numerical comparisons, $\PP_{a^*}^{[n(d)]}$ is then not optimal among all spherically symmetric distributions, but it still outperforms the optimised normal distribution $\mu^{[n(d)]}_{\ms^*}$.

\begin{figure}[ht!]
\begin{center}
\includegraphics[width=.32\linewidth]{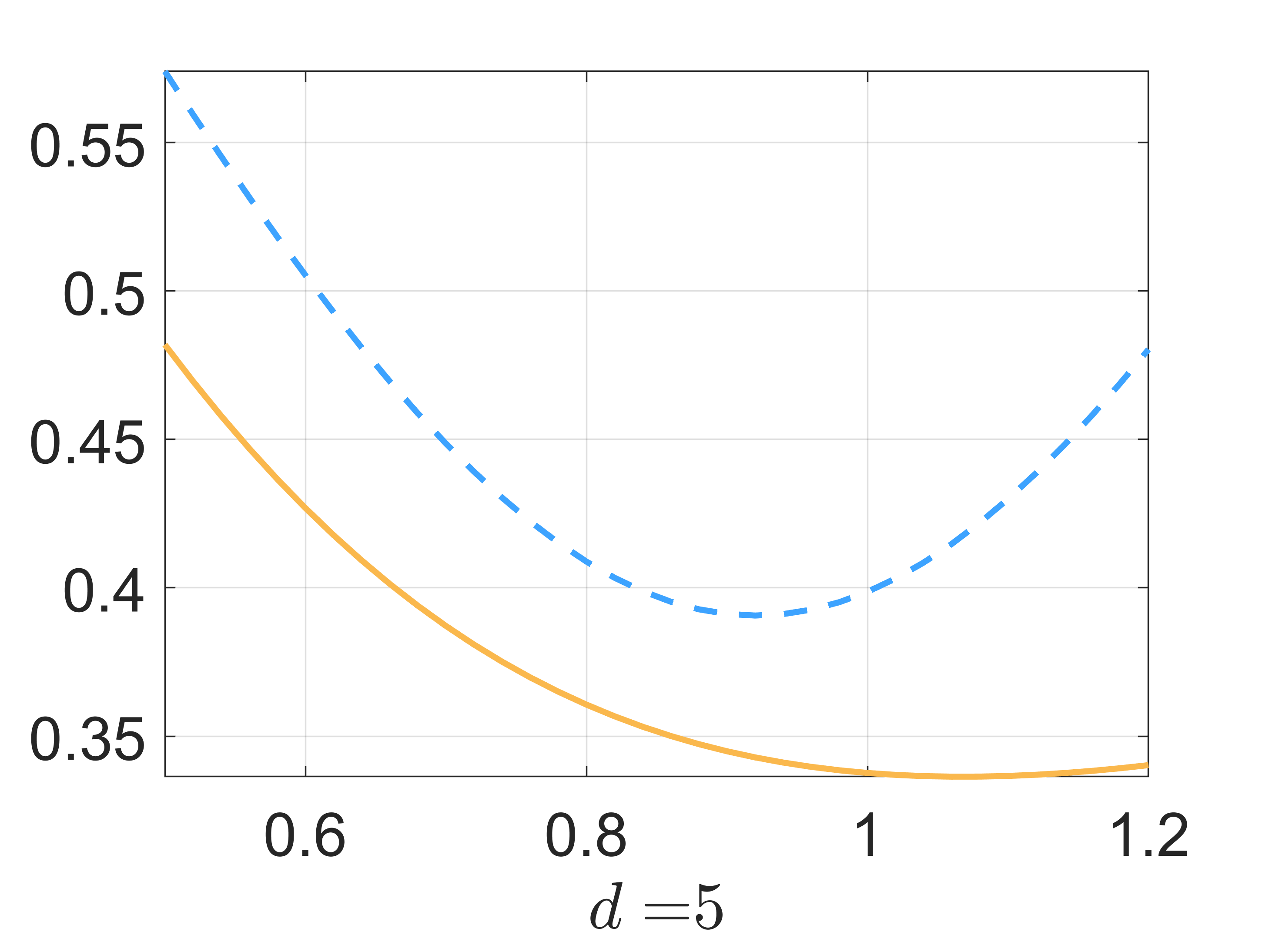}
\includegraphics[width=.32\linewidth]{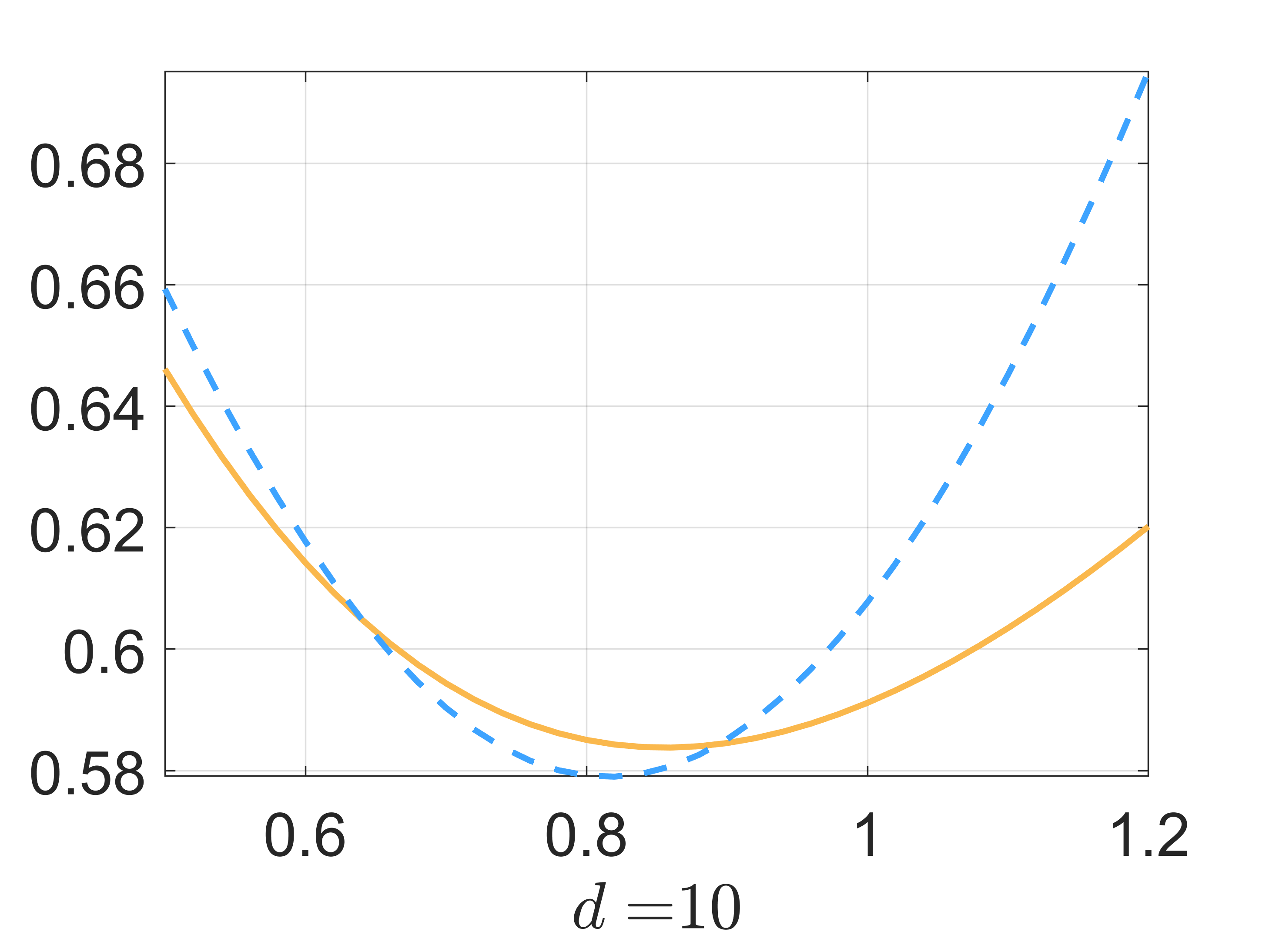}
\includegraphics[width=.32\linewidth]{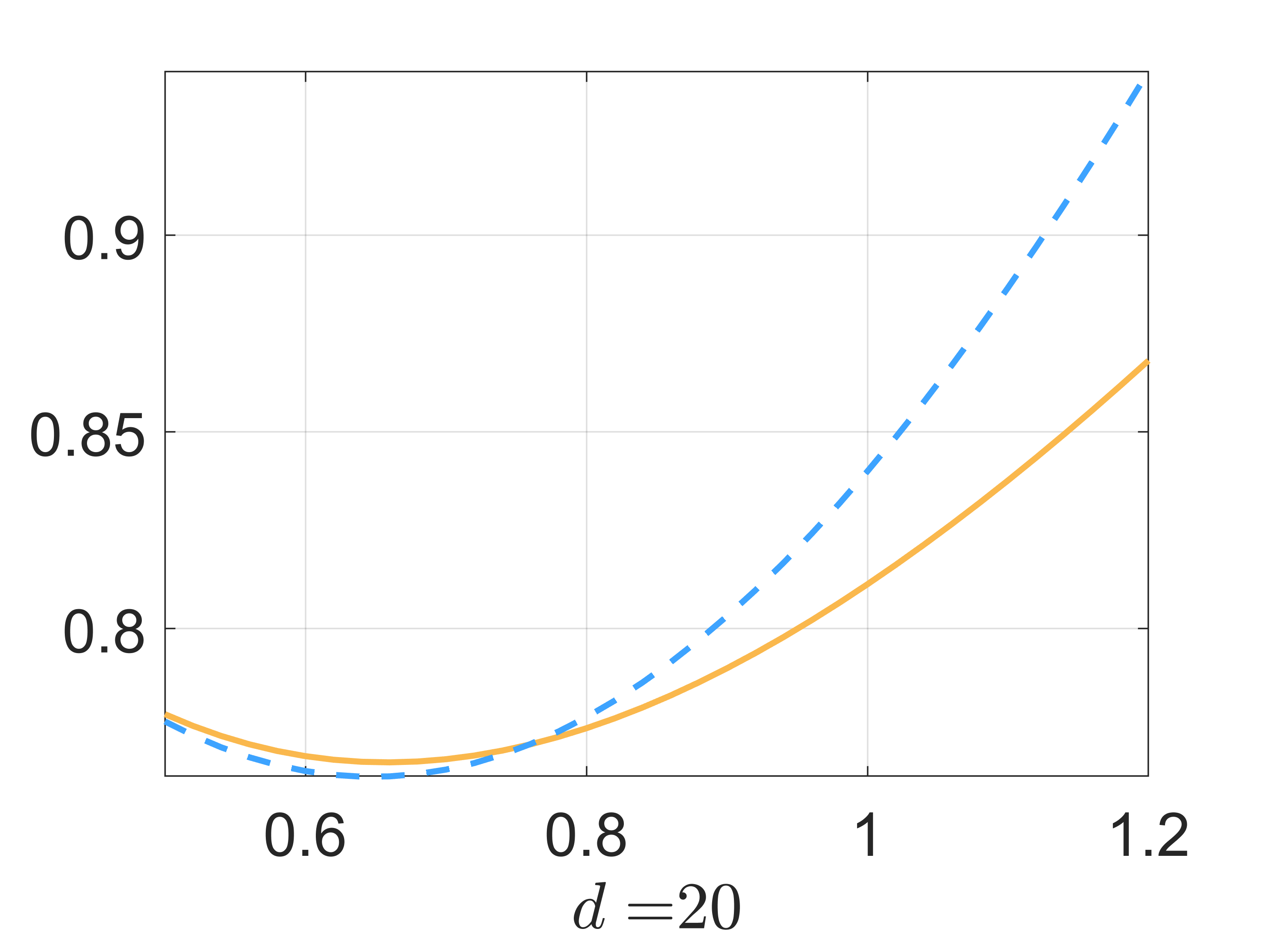}
\end{center}
\caption{\small $D_{\mu,s}^{1/s}(\PP^{[n]}_{a=\ms})$ ({\color{Cerulean} {\bf - - -}}) and $D_{\mu,s}^{1/s}(\mu^{[n]}_\ms)$ ({\color{Dandelion} {\bf ---}}) as functions of $\sigma$ for different $d$ with $n=1\,000$ and $s=2$.}
\label{F:normal-supp}
\end{figure}

Given that $M_{\psi,1}=\Ex\{\|U\|\} \to 1$ and $\var\{\|U\|\} \to 0$ as $d\to\infty$ when $U \sim \mu$, we analyse the following cases for $a$.
(\textit{i}) Uniform distribution on $\SS_{d-1}(1)$:
we consider $a = \widetilde{a^*}(d,n,s)$ which minimises $D_{\widetilde\mu,s}^{1/s}(\PP_a^{[n]})$ for $\widetilde\mu$ uniform on $\SS_{d-1}(1)$, as discussed in Section~\ref{S:sphere}.
(\textit{ii}) Scaled uniform distribution: we also consider $a = M_{\psi,1}\widetilde{a^*}(d,n,s)$, obtained when $\widetilde\mu$ is uniform on $\SS_{d-1}(M_{\psi,1})$.
As we consider the asymptotic regime $n = 2^d$, we additionally examine (\textit{iii}) the limiting value $\widehat{a^*_\ml} = \sqrt{3}/2$, see (\ref{joint-exact-a-limits}*);
(\textit{iv}) the value $\widehat{a^*}(d,s)$, obtained by minimising the extreme-value approximation $\widehat{E}_{\mu,s}^s(n;a)$ defined in (\ref{Ds-approx}*) after the quantile $\kappa_{n,d}$ is replaced by its asymptotic value $(1/2)(1 - \sqrt{3}/2)$. (Formula~(\ref{hat_a_2}*) applies only when $s=2$.)

The left panel of Figure~\ref{F:normal_n2d_d3-20_s4} displays the four parameters $\ms^*(d,n,s)$, $a^*(d,n,s)$, $\widetilde{a^*}(d,n,s)$, and $M_{\psi,1}\widetilde{a^*}(d,n,s)$ as functions of $d$ for $s=4$, together with $\widehat{a^*_\ml}$ (indicated by a horizontal line) and $\widehat{a^*}(d,s)$.
The right panel in the same figure shows the efficiency of the optimal normal distribution $\mu_{\ms^*}$ compared to $\PP_a$ uniform on $\SS_{d-1}(a)$, that is, $D_{\mu,s}^{1/s}(\PP_a^{[n]})/D_{\mu,s}^{1/s}(\mu_{\ms^*}^{[n]})$, for the five choices of $a$ considered: these five quantiser distributions $\PP_{a}$ perform better than $\mu_{\ms^*}$ (for $d\geq 4$ when $a=M_{\psi,1}\widetilde{a^*}(d,n,s)$). Note that the efficiencies for $\widehat{a^*_\ml}$ (purple triangles) and $a^*(d,n,s)$ (blue dots) almost coincide for $d\geq 5$.

\begin{figure}[ht!]
\begin{center}
\includegraphics[width=.49\linewidth]{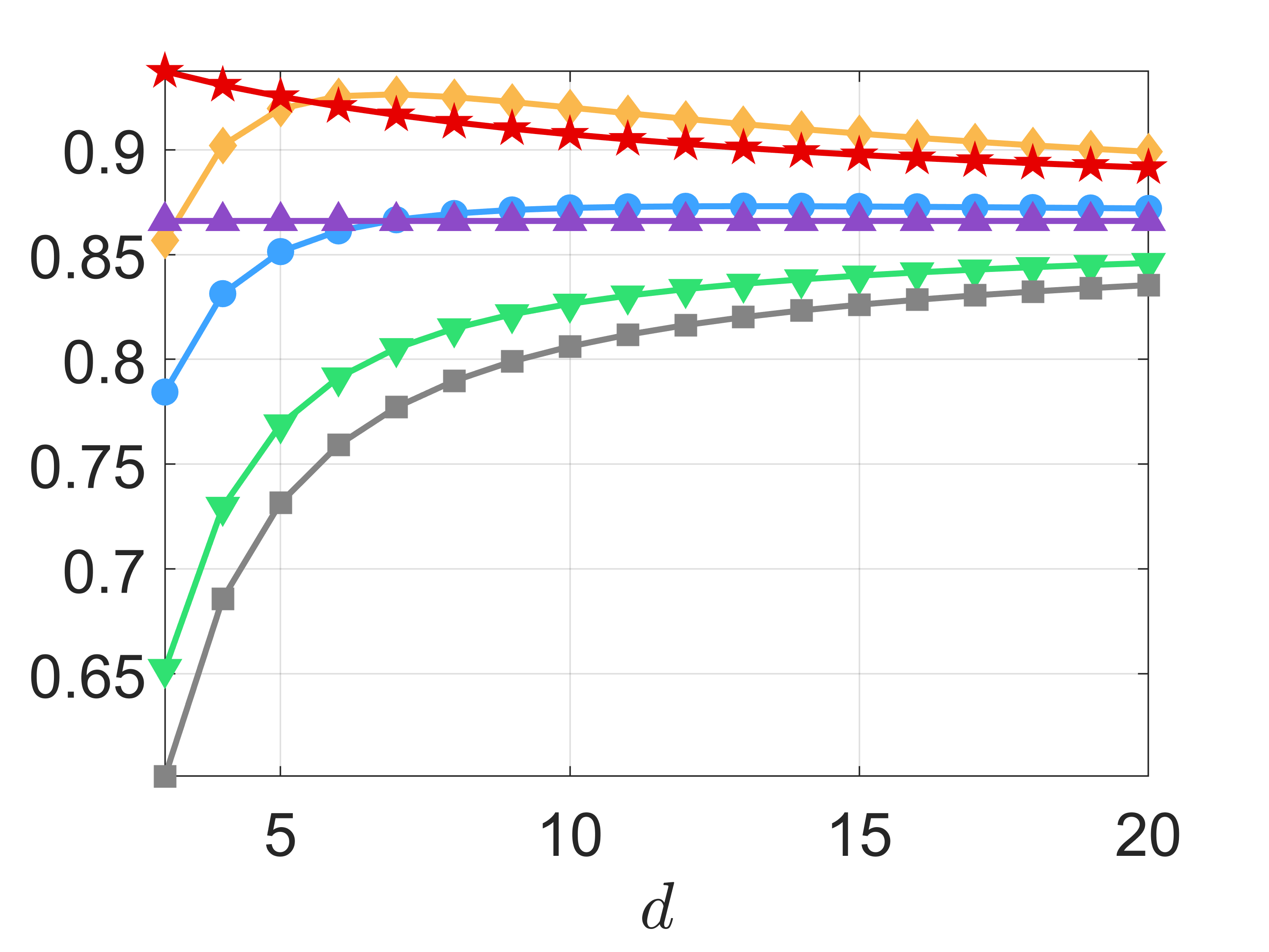}
\includegraphics[width=.49\linewidth]{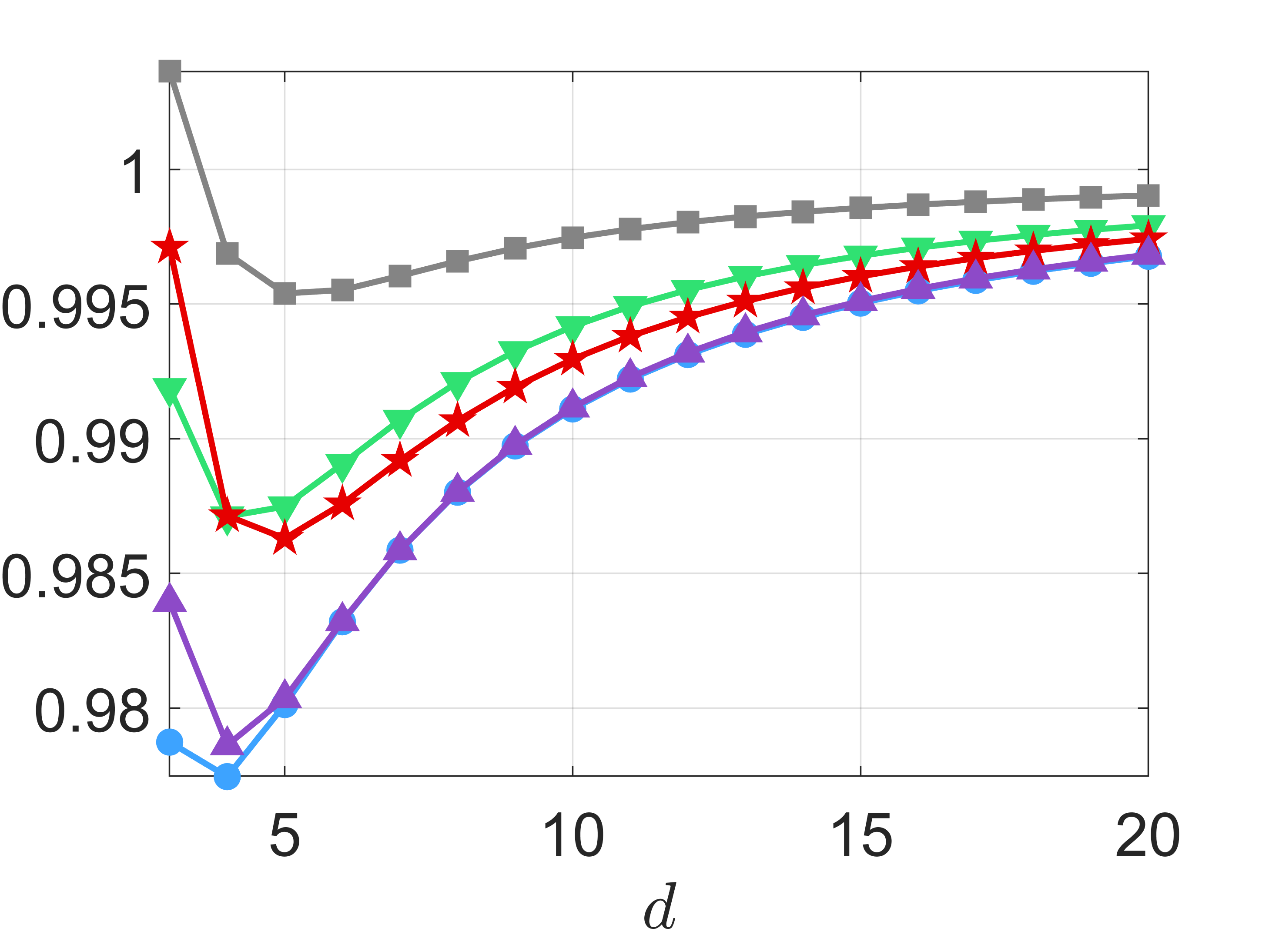}
\end{center}
\caption{\small  Left: $\ms^*(d,n,s)$ ({\color{Dandelion} $\blacklozenge$}),
$a^*(d,n,s)$ ({\color{Cerulean} $\bullet$}),
$\widetilde{a^*}(d,n,s)$ ({\color{green} $\blacktriangledown$}),
$M_{\psi,1}\widetilde{a^*}(d,n,s)$ ({\color{gray} {\tiny $\blacksquare$}}),
$\widehat{a^*_\ml}=\sqrt{3}/2$ ({\color{Orchid} $\blacktriangle$}) and
$\widehat{a^*}(d,s)$ ({\color{red} $\bigstar$}) as functions of $d$.
Right: efficiencies $D_{\mu,s}^{1/s}(\PP_a^{[n]})/D_{\mu,s}^{1/s}(\mu_{\ms^*}^{[n]})$ as functions of $d$ for the choices of $a$ indicated on the left panel ($n=2^d$, $s=4$).}
\label{F:normal_n2d_d3-20_s4}
\end{figure}

\begin{remark}
From the results in Section~\ref{S:n-growing-with-d}*, the four quantities $a^*(d,n,s)$, $\widetilde{a^*}(d,n,s)$,
$M_{\psi,1}\widetilde{a^*}(d,n,s)$ and
$\widehat{a^*}(d,s)$ tend to
$\widehat{a^*_\ml}=\sqrt{3}/2$ when $d\to\infty$. Our numerical results suggest that, in the considered asymptotic regime, $\ms^*(d,n,s)$ also converges to the limiting value $\widehat{a^*_\ml} = \sqrt{3}/2$. However, a rigorous theoretical analysis is still required to confirm this observation. The same is true for the asymptotic behaviour of $b^*(d,n,s)$ in the optimised quantiser $\PP_{0,b^*}^{[n]}$ when $\mu$ is uniform in $\SB_d(1)$. From (\ref{eq:quantile2}*) in Proposition~\ref{th:dist}*, we have to analyse the behaviour of $d(\ub,\Rb_n)=\min_{i=1,\ldots,n} \left[(\|\ub\|-R_i)^2 +4\,\|\ub\|\,R_i \, \zeta_i\right]$ when $n\to\infty$, where the $R_i$ are i.i.d., the $\zeta_i$ are i.i.d.\ with $\zeta_i\eqd\beta_{\delta,\delta}$, and the two families are mutually independent. A norm-concentration property for $R=\|X\|$ as $d\to\infty$ does not by itself imply an extreme-value limit for $d(\ub,\Rb_n)$. Even the cases where $\PP=\PP_{0,b}$ is uniform on $\SB_d(b)$ or where $\PP=\mu_\ms$ is multivariate normal pose significant challenges.
\fin
\end{remark}

